\newtheorem{theo}{Theorem}[section]
\newtheorem{prop}[theo]{Proposition}
\newtheorem{claim}[theo]{Claim}
\newtheorem{lemm}[theo]{Lemma}
\newtheorem{rema}[theo]{Remark}
\newtheorem{Defi}[theo]{Definition}
\newtheorem{ex}[theo]{Example}
\newtheorem{conj}[theo]{Conjecture}
\title{Universally defined cycles I}
\author{Claire Voisin}
\date{}
\begin{document}
\maketitle

\begin{center} {\it To the memory of Jacob Murre}
\end{center}
\begin{abstract} We introduce and study the notion of universally defined cycles of smooth varieties of dimension $d$, and prove that they are given by polynomials in the Chern classes. A similar result is proved for universally defined cycles on products of smooth varieties. We also state a conjectural explicit form for universally defined cycles on powers of smooth varieties, and provide some steps towards establishing it.
 \end{abstract}
\section{Introduction}
The  Franchetta conjecture, which has now at least two  proofs (see \cite{mestrano}, \cite{harer})
asserts that for  any line bundle $\mathcal{C}$ on the universal curve
$\mathcal{C}\rightarrow U$,  where $U$ is a Zariski open set of the moduli space
of curves of genus $g\geq2$, the restriction  of $\mathcal{L}$ to the fibers
$\mathcal{C}_b$, for any point $b$ of $U$, is a multiple of the canonical bundle.

If we want to prove a similar result for higher dimensional manifolds, we are faced to several difficulties.
The main problem is the following: there is no moduli space of manifolds of given dimension. Even if we restrict
to canonically polarized manifolds, where such moduli spaces exist, some components
consist of just one point (rigid manifolds) so the above statement
certainly fails.

This paper studies the notion of  {\it universally defined cycles} introduced in \cite{voisindiagonal} and establishes
in this setting   an analogue of the Franchetta conjecture for higher dimensional manifolds. In the case of curves, the
analogue of the notion we introduce
would be roughly  the data  of a line bundle $\mathcal{L}_g$  on the ``universal curve over
$\mathcal{M}_g$'' for each $g$, satisfying
some mild compatibility properties relating the curves of various genera, using the degenerations
of curves of genus $g$ to curves of  genus $g'$ with $g'< g$. This notion has some similarity with the functorial Chow groups considered by Mumford \cite{mumford}.  Then the conclusion would be that
the restriction of these line bundles to fibers (that is curves of genus
 $g$ defined over a field) is  a rational multiple (independent of $g$) of the canonical bundle.

The precise definition  is as follows. Here Chow groups are taken with $\mathbb{Q}$-coefficients.
\begin{Defi}\label{defiuniversintro}
A universally defined cycle on
smooth complex varieties of dimension $n$ is the data
of a cycle
$\mathcal{Z}(\pi)\in {\rm CH}(\mathcal{X})$ for each
 smooth morphism $\pi: \mathcal{X}\rightarrow B$ of relative dimension $n$, where $\mathcal{X}$ and $B$ are smooth quasi-projective.
These data should satisfy the following axioms:

(i) If $r:B'\rightarrow B$ is a morphism, with $B'$ smooth,
and $r':\mathcal{X}':=\mathcal{X}\times_B B'\rightarrow  \mathcal{X}$, $\pi':\mathcal{X}'\rightarrow  B'$ are the two natural morphisms,
$$ \mathcal{Z}(\pi')={r'}^*\mathcal{Z}(\pi)
\,\,{\rm in}\,\, {\rm CH}(\mathcal{X}').$$

(ii)  If $\mathcal{X}'\subset \mathcal{X}$ is a Zariski open subset, and
$\pi':\mathcal{X}'\rightarrow B$ is the composition of the inclusion and of the morphism
$\pi:\mathcal{X}\rightarrow B$,
$$\mathcal{Z}(\pi')={\mathcal{Z}(\pi)}_{\mid \mathcal{X}'}
\,\,{\rm in}\,\, {\rm CH}(\mathcal{X}').$$
\end{Defi}

\begin{rema}\label{remaintro24} {\rm 1)  We could a priori work over any  field $K$
and smooth algebraic varieties $\mathcal{X},\,B$ over $K$. It might be however that our results need more assumptions on $K$.

2)  We will also  use the notation  $\mathcal{Z}(\mathcal{X})$ although the cycle depends on the morphism
$\pi$. When $B$ is a point, that is, $X$ is  a variety defined over a field, we will systematically  use the notation
$\mathcal{Z}({X})\in{\rm CH}(X)$.

3) In the definition above, smoothness of the morphism $\pi$ is essential, as well as  the absence of any projectivity assumption on
$\pi$.}
\end{rema}
\begin{ex}{\rm The Chern classes of the relative tangent bundle $T_\pi:=T_{\mathcal{X}/B}$
are universally defined, and more generally  any fixed polynomial in the relative Chern classes
is universally defined.
}
\end{ex}

Our first main result  in this paper is the following:
\begin{theo}\label{theointro1}
Let $\mathcal{Z}$ be a universally defined cycle on varieties of dimension $d$. Then
there exists a polynomial $P$ with rational coefficients in $d$ variables
$c_1,\,\ldots,\,c_d$, such that for any smooth  quasi-projective variety $X$ over $\mathbb{C}$,
$\mathcal{Z}(X)=P(c_1(X),\ldots,\,c_d(X))\,\,{\rm in}\,\,{\rm CH}(X)$.
\end{theo}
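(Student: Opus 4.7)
My plan is to first compute $\mathcal{Z}$ on a universal family of rank-$d$ vector bundles over a Grassmannian, thereby extracting the polynomial $P$, and then to transfer this formula to an arbitrary $X$ via a deformation to the normal bundle of the diagonal in $X \times X$. For the universal step, consider the tautological rank-$d$ quotient bundle $\pi_N : E_N \to G(d, N)$ as a smooth morphism of relative dimension $d$, so that $\mathcal{Z}(\pi_N) \in \mathrm{CH}(E_N)$ is defined. Homotopy invariance gives $\pi_N^* : \mathrm{CH}(G(d, N)) \xrightarrow{\sim} \mathrm{CH}(E_N)$, and $\mathrm{CH}^k(G(d, N))$ coincides with the degree-$k$ part of $\mathbb{Q}[c_1(E_N), \dots, c_d(E_N)]$ whenever $k \leq N - d$. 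Taking $N \gg d$, one therefore extracts a unique polynomial $P \in \mathbb{Q}[c_1, \dots, c_d]$ of weighted degree $\leq d$ satisfying $\mathcal{Z}(\pi_N) = \pi_N^* P(c_1(E_N), \dots, c_d(E_N))$, and axiom (i) applied to the inclusions $G(d, N) \hookrightarrow G(d, N + 1)$ shows that $P$ is independent of $N$.

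For any smooth quasi-projective $X$ of dimension $d$, choose a surjection $\mathcal{O}_X^N \twoheadrightarrow TX$ and the resulting classifying map $\phi : X \to G(d, N)$ with $\phi^* E_N \cong TX$; axiom (i) then gives $\mathcal{Z}(\pi_{TX} : TX \to X) = \pi_{TX}^*\, P(c_1(TX), \dots, c_d(TX))$ in $\mathrm{CH}(TX)$. To pass from the family $TX \to X$ to the family $X \to \mathrm{pt}$, I form the deformation to the normal cone of the diagonal: let $M$ be the blow-up of $X \times X \times \mathbb{A}^1$ along $\Delta_X \times \{0\}$, with the proper transform of $X \times X \times \{0\}$ removed. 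The composition $f : M \to X \times X \times \mathbb{A}^1 \xrightarrow{p_1 \times \mathrm{id}} X \times \mathbb{A}^1$ is smooth of relative dimension $d$, with fiber $X$ over $(x, t)$ for $t \neq 0$ and $T_x X$ over $(x, 0)$. The diagonal lifts to a section $\sigma : X \times \mathbb{A}^1 \to M$ such that $\sigma \circ i_1 = \Delta$ and $\sigma \circ i_0 = s_0$, where $i_t : X \hookrightarrow X \times \mathbb{A}^1$ denotes the inclusion of the $t$-fiber and $s_0 : X \to TX$ is the zero section.

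Now apply $\mathcal{Z}$ to $f$ and pull back by $\sigma$: the class $\sigma^* \mathcal{Z}(f) \in \mathrm{CH}(X \times \mathbb{A}^1)$ is, by $\mathbb{A}^1$-homotopy invariance, pulled back from $\mathrm{CH}(X)$, so its restrictions along $i_0$ and $i_1$ coincide. At $t = 0$ the restriction equals $s_0^* \mathcal{Z}(\pi_{TX}) = s_0^* \pi_{TX}^* P(c_i(TX)) = P(c_1(TX), \dots, c_d(TX))$. At $t = 1$, axiom (i) applied to the family $X \to \mathrm{pt}$ base-changed via $X \to \mathrm{pt}$ yields $\mathcal{Z}(p_1 : X \times X \to X) = p_2^* \mathcal{Z}(X)$, whose pullback by $\Delta$ is $\mathcal{Z}(X)$. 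Consequently $\mathcal{Z}(X) = P(c_1(TX), \dots, c_d(TX))$. The main step I expect to require care is the smoothness of $f$ together with the existence of the section $\sigma$ with the stated behavior at $t = 0$: this is a classical property of the deformation to the normal cone, but must be checked by an explicit local computation in blow-up coordinates.
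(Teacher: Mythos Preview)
Your deformation-to-the-normal-cone step is sound: the morphism $f:M\to X\times\mathbb{A}^1$ is smooth because in the blow-up chart where $t$ generates the exceptional ideal the map is literally a coordinate projection $(x_i,v_i,t)\mapsto(x_i,t)$, and in the other charts $u_j\neq 0$ forces $s\neq 0$ after removing the proper transform, so the differential is again surjective. The section $\sigma$ behaves as you claim. The step you flagged is therefore not the problem.

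The genuine gap is earlier: you assume a surjection $\mathcal{O}_X^N\twoheadrightarrow TX$, i.e.\ that $TX$ is globally generated, in order to get a classifying map $\phi:X\to G(d,N)$ with $\phi^*E_N\cong TX$. This fails for most $X$; already for a smooth projective curve of genus $\geq 2$ the tangent bundle has negative degree and no nonzero sections. Without $\phi$ you cannot transport $P$ from the Grassmannian to the family $\pi_{TX}$. The fix is short: it suffices to show $\mathcal{Z}(\pi_E)=\pi_E^*P(c_i(E))$ for an arbitrary rank-$d$ bundle $E$ on a smooth quasi-projective base $Y$, and for this one may pass to an affine-bundle torsor $p:W\to Y$ with $W$ affine (Jouanolou). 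On $W$ every coherent sheaf, in particular $p^*E$, is globally generated, so your classifying-map argument applies to $p^*E$; since $\pi_{p^*E}$ is the base change of $\pi_E$ along $p$, axiom~(i) together with the fact that $p^*$ and the induced pullback $p^*E\to E$ are isomorphisms on ${\rm CH}$ then gives the formula over $Y$.

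With this patch your route is quite different from the paper's, and more direct. The paper extracts $P$ by embedding $X$ in $\mathbf{G}(d,N)$ and realizing $X$ as a Zariski open piece of a universal complete intersection of type $(l,\ldots,l)$; most of the work (Propositions~\ref{lefirstred}, \ref{proCIr=1}, \ref{lealapacdenori}) goes into showing that the polynomial one reads off that family has constant (rather than ${\rm CH}(B)$-valued) coefficients and is independent of $l$. You instead read $P$ off the single most economical family---the tautological rank-$d$ bundle---and link it to an arbitrary $X$ in one move via deformation to the normal cone of the diagonal, using only $\mathbb{A}^1$-homotopy invariance and axiom~(i). The paper's heavier machinery, on the other hand, is designed to scale: the complete-intersection framework is what drives the product case (Theorem~\ref{theosansdiag}) and yields information about $\mathcal{Z}$ on the full families $\mathcal{X}^0_{d,N,l}$, not only on closed fibers.
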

\begin{rema}{\rm Our definition involves families $\mathcal{X}\rightarrow B$ and the conclusion concerns only the constant families $X\rightarrow {\rm pt}$. One may wonder if one could  weaken the notion of universally defined cycle by working only with varieties
defined over a field (including function fields), the axiom (i) being replaced by
Fulton specialization. We have not been able to prove Theorem \ref{theointro1} with these weaker data.}
\end{rema}
\begin{rema}{\rm As we will see in Theorem \ref{theosansdiag}, the polynomial $P$ is in fact uniquely determined by $\mathcal{Z}$, assuming it has weighted degree $\leq d$ in the variables $c_i$ (where the degree of $c_i$ is $i$).
}
\end{rema}
Let us give a sketch of the proof for curves (the proof is much easier in this case):
The idea is the following:  Any smooth curve $C$ can be imbedded in $\mathbb{P}^3$,
say by a morphism
$i:C\rightarrow \mathbb{P}^3$,  and,  choosing a  large
enough positive integer $d$, there is a curve $C_d$ which is
the complete intersection  of two surfaces of degree $d$,  containing $i(C)$ as a component.
The universal (quasi-projective) smooth complete intersection curve of two surfaces of degree $d$
in $\mathbb{P}^3$ has  very simple Chow groups. So for the smooth part of $C_d$ as above,
there exists a coefficient $\alpha_d$ which is a rational number
such that
$\mathcal{Z}({C_{d,reg}})=\alpha_d c_1(K_{C_{d,reg}})$ in ${\rm CH}^1(C_{d,reg})$. Restricting
this equality to
$i(C)\setminus (C\cap {\rm Sing}\,C_d)$, the
localization exact sequence shows
that $\mathcal{Z}(C)=\alpha_d K_C+z'$ in ${\rm CH}^1(C)$, where $z'$ is supported on the intersection $C\cap {\rm Sing}\,C_d$.
Testing this equality on curves
for which there is no relations  in ${\rm CH}^1(C_{d,reg})$
between $K_C$ and points in $C\cap {\rm Sing}\,C_d$,
this  implies that $\alpha_d$ does not depend on $d$. Finally, an extra
trick (eg a monodromy argument on the points of $C\cap {\rm Sing}\,C_d$) shows that the cycle $z'$ has to be $0$ for any curve $C$.

Let us emphasize the following point (see also Remark \ref{remaintro24}, 3)): the reason why we get in the case of curves
a stronger result than the Franchetta conjecture, (since our conclusion is that
we get the same multiple of the canonical bundle for all genera)  is due to the fact that
we work with quasi-projective varieties, without properness assumption
on the morphism.
As it appears in the above sketch of proof, this allows to hide in axiom (ii) above many degenerations (where we remove the singular
points of the singular fibers), and this is why we can compare what happens for various genera.

 Theorem  \ref{theointro1} has a natural generalization
   concerning universally defined cycles  on products of varieties $X_1\times\ldots\times X_l$ of respective fixed dimensions $d_1,\ldots,\,d_l$.
 We mean by this the assignment of a cycle
 $\mathcal{Z}(\mathcal{X}_1,\ldots,\mathcal{X}_l)\in{\rm CH}(\mathcal{X}_1\times \ldots\times \mathcal{X}_l)$
 for each data of $l$ smooth morphisms $\mathcal{X}_1\rightarrow B_1,\ldots, \mathcal{X}_l\rightarrow B_l$ to smooth quasi-projective basis $B_i$, satisfying compatibility conditions as in Definition \ref{defiuniversintro} (see Definition \ref{defiaxiom} for more detail).
 We will prove
 \begin{theo}\label{theosansdiag} Let $\mathcal{Z}$ be a universally defined cycle on products of $r$
smooth varieties of dimension $d_1,\ldots,\,d_r$. Then there exists a uniquely defined polynomial $P_{\mathcal{Z}}$ with rational coefficients in the variables $c_{1,1},\ldots, c_{d_1,1},\ldots,c_{1,r},\ldots, c_{d_r,r}$, of weighted degree $\leq d_i$ in each set of variables $c_{1,i},\ldots, c_{d_i,i}$ (the weight of
$c_{j,i}$ being $j$), such that
for any  given  smooth varieties $X_1,\ldots,X_r$ over $\mathbb{C}$, of respective dimensions
$d_1,\ldots,\,d_r$, the following equality
\begin{eqnarray}\label{eqpourcyclepol}
\mathcal{Z}(X_1,\ldots,X_r)=P_{\mathcal{Z}}(p_1^*c_1(X_1),\ldots,\,p_1^*c_{d_1}(X_1),\,\ldots,\,
p_r^*c_1(X_r),\ldots,\,p_r^*c_{d_r}(X_r))
\end{eqnarray}
holds in ${\rm CH}(X_1\times\ldots\times X_r)$,
where  $p_i$ denotes the projection from $X_1\times\ldots\times X_r$ to $X_i$.
\end{theo}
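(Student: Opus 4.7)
The plan is to extend to products the complete-intersection strategy behind Theorem \ref{theointro1}. For each factor, I embed $X_i$ as a component of a smooth $d_i$-dimensional complete intersection in $\mathbb{P}^{N_i}$ cut out by hypersurfaces of sufficiently high degree (so that, by adjunction, the restricted hyperplane class is a nonzero rational multiple of the first Chern class of the complete intersection), and consider the universal family $\pi_i:\mathcal{Y}_i \to B_i$ of such complete intersections, writing $\mathcal{Y}_{i,{\rm reg}}$ for the smooth locus of its total space. The main object is the cycle $\mathcal{Z}(\mathcal{Y}_{1,{\rm reg}}, \ldots, \mathcal{Y}_{r,{\rm reg}})$ on the product universal family
\[
\mathcal{Y}_{1,{\rm reg}} \times \cdots \times \mathcal{Y}_{r,{\rm reg}} \longrightarrow B_1 \times \cdots \times B_r.
\]

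The key technical input is a product analogue of the Lefschetz-type Chow computation used in the single-factor proof: modulo pullbacks from the base $B_1\times\cdots\times B_r$, the Chow group of $\mathcal{Y}_{1,{\rm reg}} \times \cdots \times \mathcal{Y}_{r,{\rm reg}}$ should be generated over $\mathbb{Q}$ by the pullbacks $p_i^*h_i$ of the hyperplane classes $h_i \in {\rm CH}^1(\mathbb{P}^{N_i})$. Granting this, restricting the universal cycle to a fiber yields
\[
\mathcal{Z}(Y_1, \ldots, Y_r) \;=\; \sum_{\mathbf{k}} \alpha_{\mathbf{k}}\,(p_1^*h_1)^{k_1}\cdots (p_r^*h_r)^{k_r}, \qquad \alpha_{\mathbf{k}} \in \mathbb{Q},
\]
and by adjunction each $h_i|_{Y_i}$ is proportional to $c_1(Y_i)$, so the right-hand side is already a polynomial in the pullbacks $p_i^*c_1(Y_i)$. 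Letting the degrees of the defining hypersurfaces vary and testing on sufficiently many specific products (for instance $\mathbb{P}^{d_1}\times\cdots\times\mathbb{P}^{d_r}$ and products of complete intersections of different types) then pins down the coefficients and yields a universal polynomial $P_\mathcal{Z}$. Uniqueness in weighted degree $\leq d_i$ per block follows because $p_i^*f(c_1(X_i),\ldots,c_{d_i}(X_i))=0$ whenever $f$ has weighted degree $>d_i$ (since ${\rm CH}^{>d_i}(X_i)=0$), combined with a standard separation argument using varieties whose Chern classes are sufficiently generic in each weighted degree.

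To pass from complete intersections to arbitrary smooth $X_i$, I would follow the curve sketch: embed each $X_i$ as a component of a complete intersection $\tilde{Y}_i = X_i \cup X_i'$, restrict the formula for $\mathcal{Z}(\tilde{Y}_1,\ldots,\tilde{Y}_r)$ to the open set $(X_1\setminus (X_1\cap X_1'))\times\cdots\times (X_r\setminus (X_r\cap X_r'))$ via axiom (ii) of Definition \ref{defiuniversintro}, and then apply a monodromy argument on each parameter space $B_i$ to kill the residual contributions supported on the intersections $X_i\cap X_i'$. I expect the main obstacle to be the Chow-theoretic input in the previous paragraph: the Lefschetz-type computation that handles a single universal complete intersection family must be upgraded to a K\"unneth-compatible statement for the product of quasi-projective families over a product of quasi-projective bases, requiring careful control of the way classes pulled back from each $B_i$ interact with the hyperplane classes coming from the various projective factors.
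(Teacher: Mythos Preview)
Your outline has a genuine structural gap that already appears in the single-factor case ($r=1$, dimension $d\geq 2$). Working with complete intersections in $\mathbb{P}^{N_i}$, the only class available on the universal family is the hyperplane class $h_i$, and by adjunction every Chern class $c_j(Y_i)$ of such a complete intersection is a fixed polynomial in $h_i$ whose coefficients depend on the multidegree. Thus your displayed formula for $\mathcal{Z}(Y_1,\ldots,Y_r)$ is \emph{a priori} only a polynomial in the $p_i^*c_1(Y_i)$, and varying the multidegrees gives you at most one real parameter per factor. This is far too few equations to determine a polynomial in the full set of variables $c_{1,i},\ldots,c_{d_i,i}$: on complete intersections in projective space, distinct Chern monomials such as $c_2$ and $c_1^2$ become proportional, so no amount of testing on such families can separate them. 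Your sentence ``testing on sufficiently many specific products\ldots\ pins down the coefficients'' therefore does not go through.

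The paper repairs exactly this point in two ways. First, it replaces $\mathbb{P}^{N_i}$ by the Grassmannian $\mathbf{G}(d_i,N)$, whose Chow ring already contains the independent classes $c_1(\mathcal{Q}),\ldots,c_{d_i}(\mathcal{Q})$; the relative Chern classes of the universal complete intersection in $\mathbf{G}(d_i,N)$ are then an invertible change of variables from these, so the full Chern polynomial structure is visible on the universal family (Proposition~\ref{proCIr=1} and its $Y$-version, Lemma~\ref{leproCIY}). Second, and crucially, the paper constructs an explicit test scheme $X_0$ (Proposition~\ref{lealapacdenori}) with \emph{no} polynomial relations in degree $\leq d$ between the $c_i(X_0)$ and the pulled-back $c_j(\mathcal{Q})$; this is what forces the a priori ${\rm CH}(B)$-valued coefficients of $P_l$ to be constants in $\mathbb{Q}$ (or ${\rm CH}(Y)$ in the $Y$-version). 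Your ``modulo pullbacks from the base'' hides precisely this difficulty: one must \emph{prove} that the base contributions vanish, and the mechanism is the algebraic Lemmas~\ref{leallurepol}--\ref{letresfacileconst}, not a Lefschetz statement. Finally, the passage from complete intersections to arbitrary $X$ is not done by monodromy on the singular points but by Proposition~\ref{lefirstred}, an explicit injectivity statement for pullback to the complement of a universal determinantal locus; and rather than treating all $r$ factors simultaneously, the paper proceeds by induction on $r$ via the notion of $Y$-universally defined cycle (Definition~\ref{defiY}, Theorem~\ref{theoY}), extracting the coefficients $\alpha_I\in{\rm CH}(Y)$ by pushing forward along one factor and showing they are themselves universally defined on $(r-1)$-fold products (Lemma~\ref{leunirmoins1}).
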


 Theorem \ref{theosansdiag} is motivated by the following conjecture \ref{theavecpowerintro}, for which
we first need  to introduce
universally defined cycles on $k$-th powers of $n$-dimensional algebraic varieties.
We refer to Section \ref{sectionavecpuissance} for the precise definition, but let us just say that such a universally defined
cycle
$\mathcal{Z}$  is the assignment
of a cycle $\mathcal{Z}(\mathcal{X})\in{\rm CH}(\mathcal{X}^{k/B})$ for any smooth morphism $\mathcal{X}\rightarrow B$, where
$B$ is also smooth, satisfying the same axioms as in Definition
\ref{defiuniversintro}.
A typical example is given by
the diagonal $\Delta_{I,\mathcal{X}}$ determined by any partition $I$ of
$\{1,\ldots,\,k\}$ (see Section \ref{sectionavecpuissance} for the precise definition).
The present paper presents the first steps toward proving  the following statement, which, when $k=1$,  is Theorem
\ref{theointro1}.
\begin{conj}\label{theavecpowerintro}  For any universally defined cycle on
$k$-th powers of varieties of dimension $d$, there exists a unique collection of polynomials
$P_I$ indexed by partitions $I$ of $\{1,\ldots,\,k\}$, with the following properties: denoting by  $l(I)$ the length of the partition $I=\{I_1,\ldots,\,I_{l(I)}\}$,
$P_I$ is a polynomial  in the variables
 $c_{1,1},\ldots,\,c_{d,1},\ldots,\,c_{1,l(I)},\ldots,\,c_{d,l(I)}$, which is of weighted degree $\leq d$ in each set
 of variables $c_{1,s},\ldots,\,c_{d,s}$, and,  for each smooth  variety
 $X$ of dimension $d$ over $\mathbb{C}$, the equality
 \begin{eqnarray} \label{eqformulegeneralepourstand} \mathcal{Z}(X)=\sum_I\Delta_{I,X*}P_I({\rm pr}_1^*c_{1}(X),\ldots,\,{\rm pr}_1^*c_{d}(X),\ldots,\,{\rm pr}_{l(I)}^*c_{1}(X),\ldots,\,{\rm pr}_{l(I)}^*c_{d}(X) )
 \end{eqnarray}
holds in ${\rm CH}(X^k)$.
\end{conj}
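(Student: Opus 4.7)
The plan is to decompose $\mathcal{Z}(X)$ recursively along the stratification of $X^k$ by diagonals, using Theorem~\ref{theosansdiag} as the key tool on each stratum. I would begin by constructing, out of the universally defined cycle $\mathcal{Z}$ on $k$-th powers, an auxiliary universally defined cycle $\mathcal{Z}'$ on products of $k$ varieties of dimension $d$: given smooth morphisms $\mathcal{X}_i\to B_i$ of relative dimension $d$, set $B:=B_1\times\cdots\times B_k$ and $\mathcal{Y}:=\bigsqcup_{i=1}^{k}\tilde{\mathcal{X}}_i\to B$, where $\tilde{\mathcal{X}}_i$ is the pullback of $\mathcal{X}_i$ along the projection $B\to B_i$. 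Then $\mathcal{Y}\to B$ is smooth of relative dimension $d$, and the component of $\mathcal{Y}^{k/B}$ corresponding to the ordered $k$-tuple $(1,2,\ldots,k)$ is canonically identified with the external product $\mathcal{X}_1\times\cdots\times\mathcal{X}_k$. Setting
\[
\mathcal{Z}'(\mathcal{X}_1,\ldots,\mathcal{X}_k):=\mathcal{Z}(\mathcal{Y})|_{\mathcal{X}_1\times\cdots\times\mathcal{X}_k},
\]
axioms~(i) and~(ii) for $\mathcal{Z}'$ follow routinely from those for $\mathcal{Z}$, and Theorem~\ref{theosansdiag} then produces a unique polynomial $P_{\mathrm{sing}}$ in the Chern classes of the $X_i$ pulled back via the projections, with $\mathcal{Z}'(X_1,\ldots,X_k)$ equal to that polynomial. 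This $P_{\mathrm{sing}}$ is the candidate for $P_I$ when $I$ is the all-singletons partition.

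The central step is then to prove
\[
\mathcal{Z}(X)|_U=P_{\mathrm{sing}}(p_1^*c_1(X),\ldots,p_k^*c_d(X))|_U
\]
on the open configuration stratum $U:=X^k\setminus\bigcup_{i<j}\{x_i=x_j\}$. When $V_1,\ldots,V_k$ are pairwise disjoint non-empty Zariski open subsets of $X$, the open subvariety $V:=V_1\sqcup\cdots\sqcup V_k\subset X$ can simultaneously be viewed as an open subset of $X\sqcup\cdots\sqcup X$ (by placing $V_i$ in the $i$-th copy of $X$); applying axiom~(ii) to both open embeddings and restricting to the ordered component $V_1\times\cdots\times V_k$ of $V^k$ identifies $\mathcal{Z}(X)|_{V_1\times\cdots\times V_k}$ with $\mathcal{Z}'(X,\ldots,X)|_{V_1\times\cdots\times V_k}=P_{\mathrm{sing}}|_{V_1\times\cdots\times V_k}$. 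The obstacle here is that for connected (in particular irreducible) $X$ no such pairwise disjoint non-empty Zariski opens exist, so open sets of the form $V_1\times\cdots\times V_k$ do not cover $U$. Closing this gap is the genuine crux of the conjecture; it plausibly requires either an \'etale-local strengthening of the axioms, a deformation argument comparing $X^k$ with a disjoint-union degeneration, or a direct analysis on universal complete intersections in $\mathbb{P}^N$ mirroring the curve sketch of Theorem~\ref{theointro1}.

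Granting the open-stratum equality, the descent is then straightforward. Setting $\mathcal{Z}_1(X):=\mathcal{Z}(X)-P_{\mathrm{sing}}(\ldots)$ one obtains a cycle supported on the big diagonal. Induct on $m$ from $k-1$ down to $1$: at stage $m$, for each partition $I$ with $l(I)=m$, work on the open subset $X^k\setminus\bigcup_{l(J)<m,\,J\not\leq I}\bar{S}_J$, on which the running remainder is supported on $\bar{S}_I\cong X^m$ via $\Delta_I$; its image in $\mathrm{CH}(X^m)$ (read over the configuration space) is itself a universally defined cycle on $m$-th powers of $d$-dimensional varieties. The analogue of the construction and comparison above then yields a polynomial $P_I$; subtracting $\Delta_{I*}P_I$ for all $I$ of length $m$ and iterating down to $m=1$ produces the full decomposition. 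Uniqueness of each $P_I$ is inherited at every stage from the uniqueness in Theorem~\ref{theosansdiag}. At each stratum, however, the same connected-variety obstruction reappears, so the open-stratum comparison for connected $X$ is where the heart of the conjecture lies.
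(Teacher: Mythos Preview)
This statement is labelled a conjecture in the paper, and the paper does \emph{not} prove the existence part; it explicitly says the proof is still incomplete. What the paper does establish is the uniqueness assertion (Theorem~\ref{propunique}) and the conditional result Theorem~\ref{corocohchow}. So there is no ``paper's own proof'' of existence to compare against.

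Your auxiliary construction (form the disjoint union $\mathcal{Y}=\bigsqcup_i\tilde{\mathcal{X}}_i\to B$ and restrict $\mathcal{Z}(\mathcal{Y})$ to the component of $\mathcal{Y}^{k/B}$ indexed by the partition) is exactly the paper's construction in Section~\ref{sectiontrick}, where it is written $\mathcal{Z}_I$ for a general partition $I$; your $\mathcal{Z}'$ is the paper's $\mathcal{Z}_{\{\{1\},\ldots,\{k\}\}}$. The paper uses this construction to derive formula~(\ref{eqnew33}) and then to prove uniqueness and Theorem~\ref{corocohchow}, but not existence. You have correctly located the genuine obstruction: the ``open-stratum comparison'' would require identifying $\mathcal{Z}(X)$ and $\mathcal{Z}'(X,\ldots,X)$ on the configuration locus $U\subset X^k$, and your argument via pairwise disjoint Zariski opens $V_1,\ldots,V_k\subset X$ collapses for irreducible $X$. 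This is precisely the gap that keeps the statement a conjecture; none of the partial results in the paper close it, and your suggested fixes (\'etale axioms, degeneration to a disjoint union, a direct complete-intersection analysis) are reasonable directions but not carried out here or in the paper.

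One difference worth noting: your uniqueness argument is tied to the inductive existence scheme (uniqueness of $P_I$ ``inherited at every stage''), hence it is contingent on resolving the gap. The paper's uniqueness proof (Theorem~\ref{propunique}) is self-contained: assuming a standard cycle $\mathcal{Z}_{P_\bullet}$ vanishes identically, it applies the disjoint-union construction, uses formula~(\ref{eqnew33}), and argues by decreasing induction on $l(I)$, exploiting the injectivity of $\Delta_{I_\bullet,*}$ for projective $X_i$ to force $P_I=0$. That argument does not require the open-stratum comparison and therefore stands independently of the existence conjecture.
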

In formula (\ref{eqformulegeneralepourstand}), we see $\Delta_{I,X}$ as a proper morphism from
$X^{l(I)}$ to $X^k$.

In the third  section of the paper, we will  establish the uniqueness statement in  Conjecture  \ref{theavecpowerintro} (Cf. Theorem  \ref{propunique}).

  Conjecture \ref{theavecpowerintro} had been unwisely announced as a forthcoming theorem  in the paper \cite{voisindiagonal} where we first introduced the formalism of universally defined cycles,  but its proof is still incomplete. It  would   have  many consequences
 that are described in \cite{voisinforthcoming} and also in \cite{voisindiagonal}. The first  consequence would be  an easy alternative proof of the Ellingsrud-G\"{o}ttsche-Lehn theorem \cite{EGL}. The proofs of \cite[Section 5]{voisindiagonal} concerning the Chow rings of Hilbert schemes of $K3$ surfaces depend on this statement and are therefore incomplete. In particular, as observed by Maulik,  Conjecture \ref{theavecpowerintro} would have major   applications to the  Beauville-Voisin conjecture on the Chow ring of Hilbert schemes of $K3$ surfaces (see \cite{beau}, \cite{voisinpamq}), obtained by  proving  the Lehn commutation relations (see \cite{lehn}) in the Chow setting. In the paper \cite{Mauliknegut} by Maulik and Negut, these relations  have  been given a direct proof.

Let us say that a universally defined cycle on powers of varieties is standard if it takes the form (\ref{eqformulegeneralepourstand}). Conjecture \ref{theavecpowerintro} says that any universally defined cycle on powers of varieties is standard. In  Section \ref{sectiontrick}, we establish   the following result, which provides a strong motivation for  Conjecture \ref{theavecpowerintro}:
\begin{theo}\label{corocohchow}  Let $\mathcal{Z}$ be  a universally defined cycle on powers
of $d$-dimensional smooth varieties, which is standard. If $\mathcal{Z}(X)$ is cohomologous to $0$ for any
smooth $d$-dimensional variety $X$ defined over $\mathbb{C}$, $\mathcal{Z}(X)=0$ in ${\rm CH}(X)$ for any
such $X$.
\end{theo}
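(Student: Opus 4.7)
Since $\mathcal{Z}$ is assumed standard, we may write
\[
\mathcal{Z}(X)=\sum_I \Delta_{I,X*}P_I({\rm pr}_1^*c_1(X),\ldots,{\rm pr}_{l(I)}^*c_d(X))
\]
for certain polynomials $P_I$, uniquely determined by the degree restrictions via Theorem \ref{propunique}. The strategy is to upgrade the uniqueness of the $P_I$'s from Chow to cohomology: if one can show that an expression of this form is cohomologous to zero for every smooth $d$-dimensional $X/\mathbb{C}$ only when every $P_I$ vanishes identically, then the hypothesis of the theorem immediately forces $\mathcal{Z}(X)=0$ in ${\rm CH}(X^k)$ for every such $X$.

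The argument naturally splits into three steps. First, I would inspect the proof of Theorem \ref{propunique}: the expected route is to isolate the contribution of each $P_I$ by restricting $\mathcal{Z}(X)$ to open subsets of the form $X^k\setminus\bigcup_{l(J)<l(I)}\Delta_{J,X}$, so that only terms coming from partitions $I'$ with $l(I')\geq l(I)$ survive, and then to pull back along suitable morphisms to single out one $P_I$ at a time. Second, for each isolated term I would evaluate on a smooth $d$-dimensional test variety $X$ with the properties that (a) the substitution map $P\mapsto P(c_1(X),\ldots,c_d(X))$ is injective on polynomials of weighted degree $\leq d$, and (b) the rational cycle class map ${\rm CH}(X^k)\to H^*(X^k,\mathbb{Q})$ is injective. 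On such an $X$, cohomological vanishing of $\mathcal{Z}(X)$ translates directly into the vanishing of the coefficients of the chosen $P_I$. Third, I would check that all the operations used in the first two steps---restrictions to Zariski open subsets, smooth pullbacks, and pushforwards along partial diagonals---are defined identically in Chow and in singular cohomology, so that the entire argument transports from one setting to the other without modification.

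The main obstacle is step 2: producing a single $d$-dimensional test variety $X$ satisfying (a) and (b) simultaneously. The obvious candidates such as $\mathbb{P}^d$, or more generally products of projective spaces, satisfy (b) rationally but typically fail (a); for instance, on $\mathbb{P}^d$ every $c_i(X)$ is a scalar multiple of a power of one hyperplane class, so the evaluation map $P\mapsto P(c_\bullet(\mathbb{P}^d))$ collapses many distinct monomials of weighted degree $\leq d$ onto the same element. One is therefore pushed toward more flexible families, such as sufficiently general smooth complete intersections in large projective spaces (where the Chern classes can be analyzed via the adjunction formula) or suitable products of such, and one must verify the genericity of the Chern classes in (a) while controlling $X^k$ enough to preserve (b), typically by embedding $X^k$ into a Zariski-locally trivial fibration with simple Chow groups in the spirit of the sketch given for the curve case. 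Once such a family is in hand, the transport of the uniqueness argument from Chow to cohomology is then a purely formal matter.
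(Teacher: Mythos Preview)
Your plan rests on a step that you yourself flag as the main obstacle and do not resolve: producing a single smooth $d$-dimensional variety $X$ for which (a) the Chern classes $c_1(X),\ldots,c_d(X)$ satisfy no polynomial relation of weighted degree $\le d$, and (b) the cycle class map ${\rm CH}(X^k)\to H^*(X^k,\mathbb{Q})$ is injective. These two requirements pull in opposite directions. Condition (b) essentially forces $X$ to be built from cellular pieces (products of projective spaces, Grassmannians, toric varieties), and for such $X$ the Chern classes live in a small ring with many relations, so (a) fails. Your suggestion of using complete intersections does not help: a general complete intersection of dimension $d\ge 1$ has nontrivial ${\rm CH}_0$, so (b) already fails for $k=1$. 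In short, the test variety you are looking for very likely does not exist, and your argument has no fallback.

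There is also a gap in step 1: restricting to $X^k\setminus\bigcup_{l(J)<l(I)}\Delta_{J,X}$ kills the terms with $l(J)<l(I)$, but you are still left with all $P_{I'}$ with $l(I')\ge l(I)$, and you give no concrete mechanism for separating one $P_I$ from the others on a single variety $X$.

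The paper's proof bypasses both difficulties with one idea you have missed: apply $\mathcal{Z}$ to a \emph{disconnected} variety $X_\bullet=X_1\sqcup\cdots\sqcup X_{l(I)}$. This is still a smooth $d$-dimensional variety over $\mathbb{C}$, so $[\mathcal{Z}(X_\bullet)]=0$ in cohomology by hypothesis. Now $X_\bullet^k$ breaks into components, and on the component $X_1^{I_1}\times\cdots\times X_{l(I)}^{I_{l(I)}}$ the only diagonals $\Delta_{J,X_\bullet}$ that meet it are those with $J$ refining $I$ (a diagonal identification $x_i=x_j$ is impossible when $x_i\in X_s$ and $x_j\in X_t$ with $s\ne t$). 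This gives a formula for the restricted cycle as a sum over $J$ refining $I$, which is triangular in $l(J)$. By decreasing induction on $l(I)$ one is reduced to showing that $\Delta_{I*}P_I(c_\bullet(X_1),\ldots,c_\bullet(X_{l(I)}))$ is cohomologous to zero for all choices of projective $X_s$; since the Gysin map $\Delta_{I*}$ is injective in cohomology for projective varieties, this forces $P_I(c_\bullet(X_1),\ldots,c_\bullet(X_{l(I)}))=0$ in cohomology for all $X_s$, hence $P_I=0$ by Milnor--Thom (Proposition~\ref{lealapacdenori}). The whole argument takes place in cohomology and never needs the cycle class map on any $X^k$ to be injective. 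Incidentally, this disjoint-union construction is exactly what drives the proof of Theorem~\ref{propunique} as well, not the open-restriction mechanism you anticipated.
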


\vspace{0.5cm}

{\bf Thanks.}
{\it  I thank Lie Fu, Marc Levine, Davesh Maulik and Charles Vial for interesting discussions related to this paper. This research started at IAS during the year 2014-2015, was continued at ETH-ITS during the year 2017, and finally concluded during my stay at MSRI for the program ``Birational geometry and moduli spaces", Spring  2019. I am indebted to these various institutions for providing a stimulating research environment.}
\section{Universally defined cycles on products\label{sectionsanspuissance}}
\subsection{Proof of Theorem \ref{theointro1}}
 This section is devoted to the proof of Theorem \ref{theointro1}. It is a particular case of Theorem \ref{theosansdiag} which will be proved in the next section  and whose proof needs extra ingredients.  The proof of Theorem \ref{theointro1} will use several preparatory results.
The following Proposition
 \ref{lefirstred} will allow to reduce Theorem \ref{theointro1} to the case of complete intersections, which is studied in Proposition \ref{proCIr=1}.
 Let $X$ be a variety and $E\rightarrow X$ be a rank $k$ vector bundle  on $X$ generated by a
 finite dimensional space $W$ of sections. Let $G=\mathbf{G}(k,W)$ be the Grassmannian of $k$-dimensional vector subspaces of $W$. Let $\mathcal{C}\subset G\times \mathbb{P}(E^*)$ be the set
 of pairs
 $$([V],e),\,V\subset W,\,\,e\in \mathbb{P}(E_x^*),$$ such that
 $$e(\sigma_x)=0\,\,\forall \sigma\in V,$$
 and let
 $\mathcal{C}_X$ be its image in $G\times X$ via the natural projection $\pi:\mathbb{P}(E^*)\rightarrow X$. Let us denote by $\mathcal{Q}$ the rank $k$ quotient bundle on $G$, so that $\mathcal{Q}^*$ is the rank $k$ subbundle on $G$ with fiber $V$ over the point $[V]$, and by
$p_X$, $p_G$  the projections from $G\times X$ to $X$ and $G$ respectively. The hypersurface $\mathcal{C}_X$ is thus the universal determinantal
 hypersurface associated to the universal evaluation morphism
 $$p_G^*\mathcal{Q}^*\rightarrow p_X^*E$$
 and $n:=\pi_{\mid \mathcal{C}}:\mathcal{C}\rightarrow \mathcal{C}_X$ is a natural  desingularization of $\mathcal{C}_X$.

 \begin{prop}\label{lefirstred} If ${\rm dim}\,W$ is large compared to ${\rm dim}\,X$,
 the pull-back and restriction composite map
 ${\rm CH}(X)\stackrel{p_X^*}{\rightarrow }{\rm CH}(G\times X)\rightarrow {\rm CH}((G\times X)\setminus \mathcal{C}_X)$ is injective.
\end{prop}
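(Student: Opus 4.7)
The composite map in the statement equals $\pi_U^{*}:\mathrm{CH}(X)\to\mathrm{CH}(U)$, where $U:=(G\times X)\setminus \mathcal{C}_X$ and $\pi_U:U\to X$ is the projection. My plan is to show that $\pi_U$ is a Zariski-locally trivial affine-space bundle over $X$, so that $\pi_U^{*}$ is not merely injective but in fact an isomorphism by the homotopy invariance of Chow groups. Once this structural description of $U$ is in place, no further work is needed.

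\textbf{Step 1: fibrewise description.} Fix $x\in X$ and set $K_x:=\ker(W\to E_x)$, which has dimension $\dim W-k$ since $W$ generates $E$. By the very definition of $\mathcal{C}_X$, the fibre $U_x\subset G$ consists of those $V\in G$ for which the evaluation $V\to E_x$ is an isomorphism, equivalently $V\cap K_x=0$. Such a $V$ corresponds bijectively to a linear section $s_V:E_x\to W$ of the surjection $W\twoheadrightarrow E_x$, by taking $s_V$ to be the inverse of $V\stackrel{\sim}{\to} E_x$ composed with the inclusion $V\hookrightarrow W$. The set of such splittings is, in a canonical way, a torsor under $\mathrm{Hom}(E_x,K_x)$.

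\textbf{Step 2: global torsor structure.} The exact sequence $0\to K\to W\otimes\mathcal{O}_X\to E\to 0$ on $X$ has locally free kernel $K$ of rank $\dim W-k$. Performing the identification of Step 1 in families realises $U$, as an $X$-scheme, as the scheme representing the functor $T\mapsto \{\,s:E_T\to W_T\mid W_T\to E_T\text{ composed with }s\text{ is }\mathrm{id}_{E_T}\,\}$. This $X$-scheme is a torsor under the vector bundle $\mathcal{H}\!om(E,K)$ of rank $k(\dim W-k)$. Because $E$ is locally free, $\mathrm{Ext}^{1}(E,K)$ vanishes on any affine open of $X$, so the sequence splits Zariski-locally; thus $\pi_U:U\to X$ is a Zariski-locally trivial $\mathbb{A}^{k(\dim W-k)}$-bundle over $X$.

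\textbf{Step 3: conclusion.} By the homotopy invariance of Chow groups for affine-space bundles, $\pi_U^{*}:\mathrm{CH}(X)\to\mathrm{CH}(U)$ is an isomorphism. Since $\pi_U=p_X\circ i_U$, where $i_U:U\hookrightarrow G\times X$ is the inclusion, this $\pi_U^{*}$ coincides with the composite $\mathrm{CH}(X)\stackrel{p_X^{*}}{\to}\mathrm{CH}(G\times X)\to \mathrm{CH}(U)$ appearing in the proposition, and injectivity follows a fortiori.

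The only place where care is needed is the verification that the fibrewise torsor structure of Step 1 globalises to a Zariski-locally trivial affine bundle in Step 2, but this is standard once one observes that the splitting obstruction is a local $\mathrm{Ext}^{1}$ of locally free sheaves. I note in passing that the hypothesis ``$\dim W$ large'' is not visibly needed for this injectivity statement (the construction works as soon as $W$ generates $E$); presumably the hypothesis is imposed with a view to further applications of the setup in the subsequent arguments.
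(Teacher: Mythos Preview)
Your proof is correct and follows a genuinely different route from the paper's. The paper argues by computing the image of $\mathrm{CH}(\mathcal{C}_X)\to\mathrm{CH}(G\times X)$ explicitly: resolving $\mathcal{C}_X$ by the smooth incidence variety $\mathcal{C}\subset G\times\mathbb{P}(E^*)$, it identifies this image with the ideal generated by the classes $\pi_*(C\cdot h^j)=c_{j+1}(\mathcal{Q})+(\text{polynomial in lower }c_i(\mathcal{Q}))$, and then an elementary lemma on polynomial rings shows that no nonzero element of $\mathrm{CH}(X)$ can lie in that ideal. The hypothesis ``$\dim W$ large'' enters precisely at this last step, to guarantee that the $c_j(\mathcal{Q})$ satisfy no relations in the relevant degree range. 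Your observation that $U\to X$ is a torsor under $\mathcal{H}om(E,K)$, hence an affine bundle, cuts through all of this: it yields the stronger conclusion that $\pi_U^*$ is an isomorphism, and as you rightly note needs no size hypothesis on $W$. The paper's explicit calculation is not wasted, however, since the description of the generators of the ideal is reused afterwards (in the proof of Proposition~\ref{proCIr=1} and of Lemma~\ref{leproCIY}) to express the restricted tautological classes $C_j|_U$ as concrete polynomials in classes pulled back from $X$. Your viewpoint delivers this information too, and more transparently: on $U$ the universal evaluation $p_G^*\mathcal{Q}^*\to p_X^*E$ is by construction an isomorphism, whence $c_j(\mathcal{Q})|_U=p_X^*c_j(E^*)$ directly. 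One small remark: the fact that $\pi_U^*$ is an \emph{isomorphism} (not merely surjective) for an affine bundle is true but is not literally Fulton's Theorem~3.3, which treats vector bundles; a one-line justification here would be to compactify the torsor as the complement of a sub-projective-bundle in a projective bundle and apply the projective bundle formula with localisation.
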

\begin{proof} Let  $h\in{\rm CH}(\mathbb{P}(E^*))$ be the first Chern class of the line bundle $\mathcal{O}_{\mathbb{P}(E^*)}(1)$.
We observe that
$\mathcal{C}$ is the zero-set of the tautological section of $\mathcal{Q}\boxtimes \mathcal{O}_{\mathbb{P}(E^*)}(1)$ on $G\times \mathbb{P}(E^*)$. Furthermore, as
$W$ is generated by sections, $\mathcal{C}$ is smooth of the right codimension $k$: more precisely,
via the projection to $\mathbb{P}(E^*)$, $\mathcal{C}$ is fibered over $\mathbb{P}(E^*)$ in Grassmannians $G(k,W^{\perp e})$ for hyperplanes $W^{\perp e}\subset W$, hence it is smooth and furthermore the restriction map
${\rm CH}(G\times\mathbb{P}(E^*))\rightarrow {\rm CH}(\mathcal{C})$ is surjective.
We thus conclude that the class $C$ of $\mathcal{C}$ in ${\rm CH}(G\times \mathbb{P}(E^*))$ is
$c_k(\mathcal{Q}\boxtimes \mathcal{O}_{\mathbb{P}(E^*)}(1))$ and that, denoting $j$ the inclusion of
$\mathcal{C}$ in $ G\times \mathbb{P}(E^*)$, ${\rm Im}\,(j_*:{\rm CH}^*(\mathcal{C})\rightarrow
{\rm CH}^{*+k}(G\times \mathbb{P}(E^*)))$ is equal to
${\rm Im}\,(C:{\rm CH}^*(G\times \mathbb{P}(E^*)) \rightarrow
{\rm CH}^{*+k}(G\times \mathbb{P}(E^*)))$.
Finally, as we are working with rational coefficients, the natural map
$n_*:{\rm CH}_*(\mathcal{C})\rightarrow {\rm CH}_*(\mathcal{C}_X)$ is surjective, and thus we conclude, using the localization exact sequence, that
${\rm Ker}\,( {\rm CH}(G\times X)\rightarrow  {\rm CH}((G\times X)\setminus \mathcal{C}_X))$ is equal to
\begin{eqnarray}\label{eqmap} {\rm Im}\,(\pi_*\circ C:{\rm CH}^*(G\times \mathbb{P}(E^*)) \rightarrow
{\rm CH}(G\times X)^{*+1}).
\end{eqnarray}
The space ${\rm CH}^*(G\times \mathbb{P}(E^*))$ is generated over the $\mathbb{Q}$-algebra ${\rm CH}^*(G\times X)$ by the classes $h^j$, for $j=0,\ldots, k-1$. As $\pi_*$ is a
${\rm CH}^*(G\times X)$-linear morphism by the projection formula, we conclude that
$ {\rm Im}\,(\pi_*\circ C)$ is generated over ${\rm CH}^*(G\times X)$ by the classes
$\pi_*(C\cdot h^j)$.  Proposition \ref{lefirstred} thus  follows from the following
\begin{claim}\label{claim} If $\alpha\in{\rm CH}(X)\subset  {\rm CH}(X\times G)$ has the property that $\alpha$
belongs to the ideal generated by the elements  $\pi_*(C\cdot h^j)$, $j=0,\ldots,\,k-1$, then
$\alpha=0$.
\end{claim}
Let us now prove the claim.  We use the same notation  $c_i(\mathcal{Q})$ and $h$ for the pull-back of these classes to
$G\times \mathbb{P}(E^*)$. Then $C=\sum_{i=0}^{i=k} c_i(\mathcal{Q})h^{k-i}$
and it follows that

\begin{eqnarray}\label{eqclasses} \pi_*(C\cdot h^j)=\sum_{i=0}^{i=k} c_i(\mathcal{Q})s_{j-i+1}(E^*)
\end{eqnarray}
for $j=0,\ldots,k-1$, where $s_i(E^*)$ denotes the $i$-th Segre class of $E^*$.
In particular,
\begin{eqnarray}\label{eqclasses5}\pi_*(C\cdot h^j)=c_{j+1}(\mathcal{Q})+P_j,\end{eqnarray} where $P_j$ is a polynomial
in the variables  $c_i(\mathcal{Q})$ with $i\leq j$, with coefficients in ${\rm CH}(X)$, and $P_0\in{\rm CH}(X)$.

 As ${\rm dim}\,W\gg {\rm dim}\,X$ and we can obviously restrict ourselves to  considering cycles of degree
$\leq {\rm dim}\,X$, there are no nontrivial relations in these degrees between the classes
$c_j(\mathcal{Q})$ for $j>0$. In other words, we can do as if ${\rm CH}(G)$ were the
 polynomial ring with generators
$Y_1=c_1(\mathcal{Q}),\ldots,\,Y_k=c_k(\mathcal{Q})$ and thus ${\rm CH}(G\times X)={\rm CH}(X)[Y_1,\ldots, Y_k]$. We know by (\ref{eqclasses5}) that
$\pi_*(C\cdot h^j)=Y_{j+1}+P_j$, where $P_j$ is a polynomial
in the $Y_{i}$ with $i\leq j$.
We apply  then the following elementary lemma to $R={\rm CH}(X)$, $a=\alpha$:
\begin{lemm} Let $R$ be a unitary commutative ring and
let $G_j=Y_j+P_j(Y_1,\ldots,Y_{j-1})\in R[Y_1,\ldots,\,Y_k]$, for $j=1,\ldots, k$. Then
for any relation $\sum_jF_j G_j=a$, with
$F_j\in R[Y_1,\ldots,\,Y_k]$ and  $a\in R$, one has $a=0$.
\end{lemm}
\begin{proof} Indeed, from the structure of the polynomials
$G_j$, we  construct inductively  $r_1,\ldots,r_k\in R$ such that
$G_j(r_1,\ldots, r_k)=0$ for all $j$, by the formulas $r_j=-P_j(r_1,\ldots,r_{j-1})$, $r_0=P_0$. Then $\sum_jF_j(r_\cdot)G_j(r_\cdot)=0=a$.
\end{proof}
Claim \ref{claim} is now proved and the proof of Proposition \ref{lefirstred} is now finished.
\end{proof}
The proof of Theorem \ref{theointro1}  will now be based on the following Proposition
\ref{proCIr=1}, which is
a stronger version of Theorem \ref{theointro1} for complete intersections.  For a given integer $d$, consider the Grassmannian
$\mathbf{G}(d,N)$, where $N\geq 2d+1$. From now on, the integer $N$ will be fixed, large enough compared to $d$. More precisely, we choose $N$ in such a way that any
$d$-dimensional smooth variety can be embedded in $\mathbf{G}(d,N)$.
For any given integer $l$, we consider the universal family of
$d$-dimensional complete intersections of type $(l,\ldots,l)$ in $\mathbf{G}(d,N)$. It is constructed as follows: Consider the Grassmannian $\mathbf{G}(k_d, V_{d,l})$ parameterizing
$k_d$-dimensional vector subspaces of
$V_{d,l}:=H^0(\mathbf{G}(d,N),\mathcal{L}^{\otimes l})$, where $\mathcal{L}$ is the Pl\"ucker line bundle
and $k_d:={\rm dim}\,\mathbf{G}(d,N)-d=d(N-d-1)$. The number $k_d$ is the codimension of a $d$-dimensional subvariety of $\mathbf{G}(d,N)$. A $d$-dimensional complete intersection in $\mathbf{G}(d,N)$ of type $(l,\ldots,\,l)$ is thus defined
by a vector subspace $V\subset V_{d,l}$ of dimension $k_d$.
Consider the following variety
$\mathcal{X}_{d,N,l}\subset \mathbf{G}(k_d, V_{d,l})\times \mathbf{G}(d,N)$
\begin{eqnarray}\label{eqdefXNL}
\mathcal{X}_{d,N,l}=\{([V],x)\in \mathbf{G}(k, V_{d,l})\times \mathbf{G}(d,N),\,v(x)=0\,\forall v\in V\}
.
\end{eqnarray}
Note that $\mathcal{X}_{d,N,l}$ is smooth and irreducible of codimension $k_d$ in $\mathbf{G}(k, V_{d,l})\times \mathbf{G}(d,N)$, being fibered over $\mathbf{G}(d,N)$, via the second projection ${\rm pr}_2$,  into Grassmannians $\mathbf{G}(k_d,V'_{d,l})$ for some hyperplanes
$V'_{d,l}\subset V_{d,l}$. Let
$\mathcal{X}^0_{d,N,l}\subset \mathcal{X}_{d,N,l}$ be the dense Zariski open set where
$\pi_{d,N,l}:={\rm pr}_1:\mathcal{X}_{d,N,l}\rightarrow \mathbf{G}(k_d, V_{d,l})$ is smooth. The fibers of the restriction \begin{eqnarray}\label{eqdefidNl}\pi_{d,N,l}^0:\mathcal{X}^0_{d,N,l}\rightarrow \mathbf{G}(k_d, V_{d,l})\end{eqnarray} are thus $d$-dimensional smooth {\it quasi-projective} complete intersections.
We will use the notation $c_i(\pi_{d,N,l}^0):=c_i(\mathcal{X}^0_{d,N,l}/\mathbf{G}(k_{d}, V_{d,l}))\in{\rm CH}(\mathcal{X}^0_{d,N,l})$.
 We denote below $B:=\mathbf{G}(k_{d},V_{d,l})$.

\begin{prop}\label{proCIr=1} Let $\mathcal{Z}$ be a universally defined cycle of degree $\leq d$ on  varieties of dimension $d$. Then there exists
a uniquely defined polynomial $P\in \mathbb{Q}[c_{1},\ldots,c_{d}]$, of weighted degree
$\leq d$ in the variables $c_{j}$ (of weighted degree $j$),  such that for  any $l$ large enough,
\begin{eqnarray}\label{eqCINl}
\mathcal{Z}(\pi_{d,N,l}^0)= P(c_1(\pi_{d,N,l}^0),\ldots,\,c_{d}
(\pi_{d,N,l}^0))
\end{eqnarray} in ${\rm CH}(\mathcal{X}^0_{d,N,l})$,
\end{prop}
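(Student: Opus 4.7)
The strategy is to describe $\mathrm{CH}^{\leq d}(\mathcal{X}^0_{d,N,l})$ explicitly via the Grassmannian bundle structure of the universal family, translate its generators into relative Chern classes, and then use the universality axioms — in particular specialization arguments comparing different values of $l$ — to constrain $\mathcal{Z}(\pi^0_{d,N,l})$ into the subring of relative Chern-class polynomials, with an $l$-independent $P$.

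\emph{Reduction and Chow-ring description.} The complement $\mathcal{X}_{d,N,l}\setminus\mathcal{X}^0_{d,N,l}$ lies above the discriminant divisor $\Delta\subset B$; for $l$ large, a generic point of $\Delta$ parameterizes a complete intersection with a single nodal singularity, so this complement has codimension $\geq d+1$ in $\mathcal{X}_{d,N,l}$, and localization gives $\mathrm{CH}^{\leq d}(\mathcal{X}^0_{d,N,l})=\mathrm{CH}^{\leq d}(\mathcal{X}_{d,N,l})$. Via ${\rm pr}_2:\mathcal{X}_{d,N,l}\to\mathbf{G}(d,N)$ (a Grassmannian bundle with fiber $\mathbf{G}(k_d,V'_{d,l})$, where $V'_{d,l}\subset V_{d,l}$ is the hyperplane of sections vanishing at $x$), the ring $\mathrm{CH}(\mathcal{X}_{d,N,l})$ is generated over $\mathrm{CH}(\mathbf{G}(d,N))$ by Chern classes of the tautological sub-bundle $\mathcal{S}$. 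The relative normal sequence
\[
0\to T_\pi\to {\rm pr}_2^* T_{\mathbf{G}(d,N)}\to {\rm pr}_1^*\mathcal{S}^*\otimes {\rm pr}_2^*\mathcal{L}^{\otimes l}\to 0
\]
on $\mathcal{X}^0_{d,N,l}$ then lets one solve triangularly for each $c_j(\mathcal{S})$ as a polynomial in the $c_i(\pi^0)$, the pullback of $c_i(T_{\mathbf{G}(d,N)})$, and the Pl\"ucker class $h$. Thus $\mathrm{CH}^{\leq d}(\mathcal{X}^0_{d,N,l})$ is spanned, as a $\mathbb{Q}$-vector space, by products of relative Chern-class monomials with pullbacks of classes from $\mathrm{CH}^{\leq d}(\mathbf{G}(d,N))$.

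\emph{Universality and $l$-independence.} Write $\mathcal{Z}(\pi^0_{d,N,l})=\sum_\alpha {\rm pr}_2^*\alpha\cdot Q_{\alpha,l}(c_i(\pi^0))$, with $\alpha$ running through a basis of $\mathrm{CH}^{\leq d}(\mathbf{G}(d,N))$ containing the unit class. I would then use axiom (i) with well-chosen base changes — exploiting either the $\mathrm{PGL}$-symmetries of $\mathbf{G}(d,N)$ acting on $B$ or degenerations inside $B$ — to force $Q_{\alpha,l}=0$ for $\alpha\neq 1$, so that the remaining term $Q_{1,l}$ is a polynomial $P_l$ in the relative Chern classes alone. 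To see that $P_l$ is in fact independent of $l$ for $l$ large, I would argue via degenerations of the shape $V\mapsto V\cdot\sigma$ for a fixed section $\sigma\in H^0(\mathcal{L})$: these produce reducible complete intersections of type $(l+1,\ldots,l+1)$ whose smooth components include the type-$(l,\ldots,l)$ complete intersections, and axiom (ii) combined with the localization exact sequence then allows one to transfer the polynomial expression between the corresponding universal families. Uniqueness of $P$ follows from the linear independence of monomials in $c_i(\pi^0)$ of weighted degree $\leq d$ in $\mathrm{CH}^{\leq d}(\mathcal{X}^0_{d,N,l})$, verified by restriction to a sufficiently general fiber. The main obstacle lies in this universality step — most subtly in establishing $l$-independence, which requires gluing together the Chow-theoretic pictures for different $l$ through explicit degenerations and a delicate use of axiom (ii).
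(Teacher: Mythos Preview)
Your initial Chow-ring description is fine and matches the paper's setup: the codimension-$(d{+}1)$ complement lets you work on $\mathcal{X}_{d,N,l}$, and after the triangular change of variables coming from the relative normal sequence one indeed has a presentation of $\mathcal{Z}(\pi^0_{d,N,l})$ as a polynomial in the $c_i(\pi^0)$ with coefficients pulled back from a Grassmannian. (The paper chooses to keep the coefficients in $\mathrm{CH}(B)$ rather than in $\mathrm{CH}(\mathbf{G}(d,N))$, but these are related by your change of variables.)

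The real gap is in your ``universality and $l$-independence'' paragraph. The step ``use axiom~(i) with well-chosen base changes \dots\ or degenerations inside $B$ to force $Q_{\alpha,l}=0$ for $\alpha\neq 1$'' is the entire content of the proposition, and neither suggestion works as stated. The $\mathrm{PGL}$-action on $\mathbf{G}(d,N)$ (and hence on $B$) leaves both the relative Chern classes \emph{and} every class $\alpha\in\mathrm{CH}(\mathbf{G}(d,N))$ invariant, since the latter are tautological; so equivariance gives no constraint at all on the $Q_{\alpha,l}$. ``Degenerations inside $B$'' is not specified further, and it is hard to see what degeneration would isolate the $\alpha=1$ term. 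The paper's mechanism here is quite different and is the heart of the argument: one constructs an auxiliary smooth projective scheme $X_0$ mapped to $\mathbf{G}(d,N)$ with the property that there are \emph{no} polynomial relations of weighted degree $\le d$ between the $c_i(X_0)$ and the pulled-back classes $c'_j$ (this requires a nontrivial construction via abelian varieties dominating products of projective spaces, combined with the Milnor--Thom independence of Chern numbers). One then compares $\mathcal{Z}(X_0)$, which depends only on $X_0$, with the restriction of $\mathcal{Z}(\pi^0_{d,N,l})$ to the open piece $(B_{X_0}\times X_0)\setminus\mathcal{C}_{X_0}$ sitting inside the universal family; on that open set the base classes $C_j$ become explicit universal polynomials $U'_j$ in the $c_i(X_0)$ and $c'_j$, and varying the embedding of $X_0$ (hence the $c'_j$) forces, via the absence of relations, the polynomial $P_l$ to be constant in the base variables. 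This is where the injectivity statement of Proposition~\ref{lefirstred} is used.

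Your proposed route to $l$-independence via $V\mapsto V\cdot\sigma$ also does not go through cleanly: the resulting type-$(l{+}1,\ldots,l{+}1)$ scheme is everywhere singular along the divisor $\{\sigma=0\}$, so the ``smooth component'' you want is only the open subset of the original type-$(l,\ldots,l)$ complete intersection away from that divisor, and axiom~(ii) plus localization then leaves an uncontrolled correction supported there. The paper avoids this entirely: once $P_l$ is known to have constant coefficients, one simply evaluates on the fixed test variety $X_0$ for every large $l$, getting $\mathcal{Z}(X_0)=P_l(c_i(X_0))$ in $\mathrm{CH}(X_0)$; since the $c_i(X_0)$ satisfy no polynomial relations of weighted degree $\le d$, this forces $P_l=P_{l_0}$.
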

\begin{rema} \label{remadu31juil} {\rm  The result is very easy  after restriction to the general fiber of $\pi_{d,N,l}^0$ or equivalently $\pi_{d,N,l}$. However we actually want the equality to hold in ${\rm CH}^*(\mathcal{X}^0_{d,N,l})$,  which  is much bigger than the Chow ring of the general fiber of $\pi_{d,N,l}^0$ because of the classes coming from the base. Indeed, we are working with cycles of codimension  $*\leq d$, and the complement of the Zariski open set $\mathcal{X}^0_{d,N,l}\subset \mathcal{X}_{d,N,l}$ is $d+1$, so in these degrees, ${\rm CH}^*(\mathcal{X}_{d,N,l})\cong {\rm CH}^*(\mathcal{X}^0_{d,N,l})$.}
\end{rema}
We  first show how Proposition \ref{proCIr=1} implies Theorem \ref{theointro1}.
\begin{proof}[Proof of Theorem \ref{theointro1}] Let $\mathcal{Z}$ be a universally defined cycle
on   varieties of dimension $d$. As noticed in Remark \ref{remasurledeg}, we can assume
 $\mathcal{Z}$ of  codimension  $\leq d$, as otherwise the statement is trivial. Let then
$P$ be the polynomial given by Proposition \ref{proCIr=1}. It suffices to show that for
 any smooth variety $X$ of  dimension $d$, $\mathcal{Z}(X)=P(c_i(X))$ in ${\rm CH}(X)$.
 We embed $X$ in $\mathbf{G}(d,N)$. For large $l$, $X$ is schematically defined by degree $l$ equations in
 $\mathbf{G}(d,N)$. Let
 $V_{d,l,X}\subset V_{d,l}$ be the space of sections of $\mathcal{L}^{\otimes l}$ vanishing along $X$.
 Then by differentiation along $X$, we get a natural surjective morphism
 \begin{eqnarray}
 \label{eqdiffmor} V_{d,l,X}\otimes\mathcal{O}_X\rightarrow N_{X/\mathbf{G}(d,N)}^*(l),
 \end{eqnarray}
 which makes $V_{d,l,X}$ into a generating space of sections of the vector bundle $ N_{X/\mathbf{G}(d,N)}^*(l)$.
 Let $\mathcal{C}_{X}\subset \mathbf{G}(k_{d}, V_{d,l,X})\times X$ be the corresponding degeneracy locus as discussed before Proposition \ref{lefirstred}.
 The points $([V],x)$ of $\mathcal{C}_{X}$ are the couples where the differentiation
 morphism
 $V_{d,l,X}\rightarrow N_{X/\mathbf{G}(d,N)}^*(l)_{|x}$ of  (\ref{eqdiffmor})  is not an isomorphism  at the point $x$, or equivalently, where the variety defined by the polynomials in $V$ is not locally isomorphic
 to $X$.
 The complement
 $(\mathbf{G}(k_{d}, V_{d,l,X})\times X)\setminus \mathcal{C}_{X}$ is thus open in a family of smooth complete intersections
 in $\mathbf{G}(d,N)$. More precisely,
 consider the inclusion
 $i:\mathbf{G}(k_{d}, V_{d,l,X})\subset \mathbf{G}(k_{d},V_{d,l})$. Then we have an open immersion
 $$(\mathbf{G}(k_{d}, V_{d,l,X})\times X)\setminus \mathcal{C}_{X}\subset i^*\mathcal{X}^0_{d,N,l}$$
 over $\mathbf{G}(k_{d}, V_{d,l,X})$,
 where $i^*\mathcal{X}^0_{d,N,l}$ is the universal family $\mathcal{X}^0_{d,N,l}$ of (\ref{eqdefidNl}), base-changed
 via $i$ to
 $\mathbf{G}(k_{d}, V_{d,l,X})$.

 Let $p_{X}$ denote  the projection from $(\mathbf{G}(k_{d}, V_{d,l,X})\times X)\setminus \mathcal{C}_{X}$ to $X$. Applying axioms (i) and (ii) of Definition \ref{defiaxiom}, we conclude that
 $$ p_{X}^*\mathcal{Z}(X)=
 \mathcal{Z}(\pi_{d,N,l}^0)_{\mid (\mathbf{G}(k_{d}, V_{d,l,X})\times X)\setminus \mathcal{C}_{X}}$$
 in ${\rm CH}((\mathbf{G}(k_{d}, V_{d,l,X})\times X)\setminus \mathcal{C}_{X})$.
  We now use Proposition \ref{proCIr=1} and observe that
  \begin{eqnarray}\label{eqditqueciuni}  c_i( \pi_{d,N,l}^0)_{\mid (\mathbf{G}(k_{d}, V_{d,l,X})\times X)\setminus \mathcal{C}_{X}})=c_i(T_{(\mathbf{G}(k_{d}, V_{d,l,X})\times X)\setminus \mathcal{C}_{X}/\mathbf{G}(k_{d}, V_{d,l,X}})=p_X^*c_i(X).\end{eqnarray}
   We then conclude that
 \begin{eqnarray}
 p_{X}^*\mathcal{Z}(X)=p_{X}^*P(c_i(X))\,\,{\rm in}\,\,{\rm CH}((\mathbf{G}(k_{d}, V_{d,l,X})\times X)\setminus \mathcal{C}_{X}).
 \end{eqnarray}
   By  Proposition \ref{lefirstred}, the morphism
 $p_{X}^*$ is injective and we thus conclude that
 $$\mathcal{Z}(X)=P(c_i(X))\,\,{\rm in}\,\,{\rm CH}( X).$$
 \end{proof}
 \begin{rema}{\rm What  has been used in a crucial way in this proof is the fact
 that the relative Chern classes are universally defined, hence, for  a morphism given by a
 projection $X\times B\rightarrow B$, they are pulled-back from $X$, and this remains true after restriction to an open set of $X\times B$ (this  is formula (\ref{eqditqueciuni})).
 }
 \end{rema}

 We  now start the proof of  Proposition \ref{proCIr=1}.
 We will need the following intermediate result:
 \begin{prop} \label{lealapacdenori} There exists a  smooth projective (not irreducible) variety  $X_0$, all of whose components  are  of dimension $d$, and a morphism  $f:X_0\rightarrow  \mathbf{G}(d,N)$,
  which satisfy the property that there are no nonzero cohomological
 polynomial relations $P(c_i(X_0), c'_j)=0$ in $H^{2*}(X_0,\mathbb{Q})$ in degree $*\leq d$, where the $c'_j$'s are the pull-backs to $X_0$ of the classes $c_j(\mathcal{Q})\in H^{2j}(\mathbf{G}(d,N),\mathbb{Q})$.
 \end{prop}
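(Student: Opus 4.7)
The plan is to build $X_0$ as a disjoint union of smooth projective $d$-dimensional components $Y_\alpha$, each equipped with a morphism $f_\alpha : Y_\alpha \to \mathbf{G}(d,N)$. Since $H^{2*}(X_0, \mathbb{Q}) = \bigoplus_\alpha H^{2*}(Y_\alpha, \mathbb{Q})$, a polynomial $P$ vanishes on $X_0$ iff it vanishes on every $Y_\alpha$. The space of admissible polynomials (weighted degree $\leq d$ in the variables $c_1, \ldots, c_d, c'_1, \ldots, c'_{N-d}$) is finite-dimensional, so by iteratively adjoining components that kill surviving elements of the kernel, it is enough to establish the following witnessing statement: for any nonzero such polynomial $P$, there exists a smooth projective $d$-fold $Y$ with a morphism $f : Y \to \mathbf{G}(d,N)$ such that $P(c_i(Y), f^* c_j(\mathcal{Q})) \neq 0$ in $H^{2*}(Y, \mathbb{Q})$.

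For the witnessing step I propose a product construction $Y = Z \times A$, with $Z \subset \mathbf{G}(d,N)$ a smooth projective $d'$-dimensional subvariety (taking $f = \iota_Z \circ \mathrm{pr}_Z$ for the inclusion $\iota_Z$), and $A$ an auxiliary smooth projective $(d-d')$-fold. Via Künneth,
$$
c_i(Y) = \sum_{k+l=i} \mathrm{pr}_Z^* c_k(Z) \cdot \mathrm{pr}_A^* c_l(A), \qquad f^* c_j(\mathcal{Q}) = \mathrm{pr}_Z^*\bigl(c_j(\mathcal{Q})|_Z\bigr),
$$
so any relation on $Y$ decomposes according to Künneth bi-degree. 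Taking $Z$ to be a general smooth complete intersection of high multi-degree, the Lefschetz hyperplane theorem transfers the algebraic independence of $c_1(\mathcal{Q}), \ldots, c_{N-d}(\mathcal{Q})$ in low degrees on $H^*(\mathbf{G}(d,N))$ (valid for $N$ large relative to $d$) to the restrictions $c_j(\mathcal{Q})|_Z$, within the Lefschetz range. Varying $A$ over a sufficiently rich collection (products of projective spaces of varied sizes, high-genus curves, abelian varieties with assorted polarizations) supplies enough flexibility in the intrinsic Chern-class patterns to detect the remaining Künneth bi-components.

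The main obstacle is controlling the interaction between the two sides: given $P$, choosing $(Z, A)$ so that a specific Künneth bi-component of $P(c_i(Y), c'_j)$ is certifiably nonzero. I expect the cleanest route is to filter $P$ by its total degree in the $c'$-variables, isolate the leading term, and match it to the top Künneth bi-degree. By Lefschetz on $Z$ and the Chern-class variability of $A$, the leading coefficients must vanish; iterating this descent on the $c'$-degree filtration—together with ranging over the decompositions $d = d' + (d - d')$—eventually reduces to polynomial relations purely in intrinsic Chern classes, which are killed by taking $A$ over enough of the chosen family. In effect, the argument amounts to a Nori-style independence statement: the richness of Künneth-decomposable cohomology on products $Z \times A$, together with the freedom to vary both factors, is more than the finite-dimensional polynomial space of relations can accommodate.
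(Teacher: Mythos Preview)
Your reduction to a witnessing statement---for every nonzero $P$ find one $(Y,f)$ with $P(c_i(Y),f^*c_j(\mathcal{Q}))\neq 0$---is correct and is also how the paper proceeds. The gap is in the witnessing construction itself. On $Y=Z\times A$ with $f$ factoring through a complete intersection $Z\subset\mathbf{G}(d,N)$, the intrinsic Chern classes $c_i(Z)$ are \emph{determined} by the $c'_j=c_j(\mathcal{Q})|_Z$ (via the normal bundle sequence and the fact that $c(T_{\mathbf{G}(d,N)})$ is itself a polynomial in the $c_j(\mathcal{Q})$). So on the $Z$-factor of the K\"unneth decomposition, the $c_i$ and the $c'_j$ are entangled, and your ``filter by $c'$-degree, isolate the leading term'' step does not obviously terminate: the leading $c'$-term of $P(c_i(Y),c'_j)$ receives contributions both from genuine $c'_j$'s and from $c_k(Z)$'s that are themselves polynomials in the $c'_j$. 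You assert that Lefschetz on $Z$ plus Chern-class variability of $A$ forces the leading coefficients to vanish, but no mechanism is given for why the dependence $c_k(Z)=Q_k(c'_\bullet)$ cannot produce cancellations that persist for every choice of $(Z,A)$ in your family. The phrase ``I expect the cleanest route is\ldots'' is where the proof is supposed to be.

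The paper resolves this entanglement by a different construction. It first proves an auxiliary lemma: there exist abelian varieties $A_i$ with morphisms to $\mathbf{G}(d,N)\times\mathbf{G}(d,N)$ such that no degree-$d$ polynomial in the two pulled-back sets of classes $(c_{j,1},c_{j',2})$ vanishes on all $A_i$ (this is Milnor--Thom cobordism, realized via products of projective spaces embedded by their tangent bundles, then dominated by abelian varieties). The point of abelian varieties is that $c_i(A_i)=0$ for $i>0$, so there is no entanglement to begin with---only the two external bundles matter. Then, to trade one external bundle for intrinsic Chern classes, the paper takes $B_i$ to be a general complete intersection of $d-1$ hypersurfaces of large degree $m$ inside $\mathbb{P}(\mathcal{Q}_{1,i})$ over $A_i$; by the Euler and normal-bundle sequences, $c(B_i)$ is an explicit expression in $h$ and the $c_j(\mathcal{Q}_{1,i})$, and pushing forward to $A_i$ yields an \emph{invertible} linear change of variables $P\mapsto P'$ between degree-$d$ polynomials in $(c_l(B_i),c_{l'}(\mathcal{Q}_{2,i}))$ and in $(c_l(\mathcal{Q}_{1,i}),c_{l'}(\mathcal{Q}_{2,i}))$, for $m$ large. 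That explicit invertibility is the missing ingredient in your approach: it is what converts two-bundle independence on an abelian base into the desired $(c_i,c'_j)$ independence on a $d$-fold.
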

 \begin{rema}{\rm It is clear that it suffices to prove the statement for polynomials of  maximal degree $*=d$.  }\end{rema}
For the proof of this proposition,  we first establish the following statement.
\begin{lemm}\label{le1du13aout} There exist abelian varieties $A_i$ of dimension $d$, and morphisms
$$f_i: A_i\rightarrow \mathbf{G}(d,N)\times \mathbf{G}(d,N)$$
such that, denoting
$$c_{j,1}:={\rm pr}_1^*c_j(\mathcal{Q}),\,c_{j,2}:={\rm pr}_2^*c_j(\mathcal{Q})\in H^{2j}(\mathbf{G}(d,N)\times \mathbf{G}(d,N),\mathbb{Q}),$$
there is no nonzero polynomial $P$ of weighted degree $d$ in the variables $(x_{j,1},\,x_{j',2})$, such that the relation
\begin{eqnarray}\label{eqrelpolabelian} P(f_i^*c_{j,1},\,f_i^*c_{j',2})=0\,\,{\rm in}\,\,H^{2d}(A_i,\mathbb{Q}),
\end{eqnarray}
is satisfied for all $i$.
\end{lemm}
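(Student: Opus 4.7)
The plan is to reduce the lemma to a nonvanishing statement on split bundles over a product of elliptic curves, which I will prove by an explicit permanent computation. Let $W$ be the finite-dimensional $\mathbb{Q}$-vector space of polynomials of weighted degree $d$ in the variables $x_{j,1}, x_{j',2}$. For each pair $(A,f)$ with $A$ an abelian variety of dimension $d$ and $f\colon A\to \mathbf{G}(d,N)^2$ a morphism, evaluation $P\mapsto P(f^*c_{j,1}, f^*c_{j',2})\in H^{2d}(A,\mathbb{Q})\cong\mathbb{Q}$ defines a linear functional $\ell_{(A,f)}$ on $W$. Since $W$ is finite-dimensional, it suffices to show that for each nonzero $P\in W$ there exists an $(A,f)$ with $\ell_{(A,f)}(P)\neq 0$; a finite family $(A_i,f_i)$ whose functionals span $W^*$ is then extracted by standard linear algebra.

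For a fixed nonzero $P$, I would take $A = E_1\times\cdots\times E_d$ with pairwise non-isogenous general elliptic curves $E_s$, and set $e_s := p_s^*[0_s]\in H^2(A,\mathbb{Q})$. These classes lie in $\mathrm{NS}(A)_\mathbb{Q}$, satisfy $e_s^2=0$, and $e_1\cdots e_d$ generates $H^{2d}(A,\mathbb{Q})$. For positive integers $\lambda_{is}^{(k)}\in[2,d+2]$ (with $k\in\{1,2\}$, $i,s\in\{1,\ldots,d\}$), the line bundles $L_i^{(k)}:=\bigotimes_s p_s^*\mathcal{O}_{E_s}(\lambda_{is}^{(k)}[0_s])$ are globally generated, and so are the split rank-$d$ bundles $\mathcal{E}_k:=\bigoplus_i L_i^{(k)}$. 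For $N$ large enough depending only on $d$, this produces morphisms $g_k\colon A\to \mathbf{G}(d,N)$ with $g_k^*\mathcal{Q}\cong\mathcal{E}_k$, and hence $f:=(g_1,g_2)$ with $f^*c_{j,k} = c_j(\mathcal{E}_k) = e_j(\xi_1^{(k)},\ldots,\xi_d^{(k)})$, where $\xi_i^{(k)}:=c_1(L_i^{(k)}) = \sum_s\lambda_{is}^{(k)}e_s$.

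The key computation takes place in the subring $R\subset H^*(A,\mathbb{Q})$ generated by the $e_s$'s, which is isomorphic to $\mathbb{Q}[e_1,\ldots,e_d]/(e_s^2)$ and whose degree-$d$ part is the line $\mathbb{Q}\cdot e_1\cdots e_d$. Accordingly, $P(f^*c_{j,k}) = F_P(\lambda)\cdot e_1\cdots e_d$ for some polynomial $F_P$ depending $\mathbb{Q}$-linearly on $P$, and the problem reduces to the injectivity of $P\mapsto F_P$. This injectivity is the main obstacle in the proof; granted it, $F_P$ is a nonzero polynomial of degree $\leq d$ in each $\lambda$-variable, so it does not vanish identically on the grid $\{2,\ldots,d+2\}^{2d^2}$, and suitable integer $\lambda$'s then produce the desired $(A,f)$.

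To prove the injectivity, I would factor $P\mapsto F_P$ through the splitting-principle inclusion $\mathbb{Q}[c_{j,k}]_d\hookrightarrow\mathbb{Q}[\xi_{i,k}]_d$, $c_{j,k}\mapsto e_j(\xi_\bullet^{(k)})$, which is injective by the fundamental theorem of symmetric polynomials applied separately for each $k$. Direct expansion in $R$ shows that the monomial $\xi_{i_1,k_1}\cdots\xi_{i_d,k_d}$ contributes to $F$ the permanent $\mathrm{perm}(T_M) := \sum_{\phi\in\mathfrak{S}_d}\prod_a\lambda_{i_a,\phi(a)}^{(k_a)}$, where $T_M$ is the $d\times d$ matrix with entry $\lambda_{i_a,s}^{(k_a)}$ at position $(a,s)$ and $M:=\{(i_a,k_a)\}_{a=1}^d$ is the multiset of $(i,k)$-labels of the factors. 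Each monomial occurring in $\mathrm{perm}(T_M)$ has its $(i,k)$-projection (obtained by dropping the column index $s$) equal to $M$, so distinct multisets $M\neq M'$ yield permanents supported on disjoint sets of $\lambda$-monomials. The family $\{\mathrm{perm}(T_M)\}_M$ is therefore $\mathbb{Q}$-linearly independent, which yields the required injectivity and completes the argument.
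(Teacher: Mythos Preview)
Your argument is correct and complete. The reduction to a single product $A=E_1\times\cdots\times E_d$ of elliptic curves equipped with split bundles, followed by the permanent computation, is a clean way to establish the required nondegeneracy. Two minor comments: the hypothesis that the $E_s$ be pairwise non-isogenous is never used (the relations $e_s^2=0$ and $e_1\cdots e_d\neq 0$ hold in $H^*(A,\mathbb{Q})=\bigotimes_s H^*(E_s,\mathbb{Q})$ unconditionally), and your $F_P$ in fact has degree $\leq 1$ in each $\lambda$-variable, so a grid of size $2$ in each coordinate would already suffice.

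The route is genuinely different from the paper's. The paper invokes the Milnor--Thom independence of Chern numbers on products $\mathbb{P}^I$ of projective spaces, passes to $\mathbb{P}^I\times\mathbb{P}^J$ and uses Poincar\'e duality against subproducts $\mathbb{P}^K$, and only at the end replaces each $\mathbb{P}^K$ by a dominating abelian variety. Your argument is more elementary and self-contained: by working from the start on an abelian variety with split bundles of prescribed Chern roots, you avoid both the appeal to complex cobordism and the auxiliary ``dominate projective spaces by abelian varieties'' step, trading them for an explicit linear-algebra computation. The paper's approach, on the other hand, is more conceptual and makes transparent the link with cobordism theory that is used elsewhere in the article; it also adapts without change if one wants the first-factor bundle to be the tangent bundle (as is needed in the subsequent Proposition), whereas your construction produces abelian varieties with trivial tangent bundle and would require an extra twist to recover that feature.
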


\begin{proof} According to \cite{milnor}, \cite{thom}, there are no nonzero linear relations between the Chern numbers
of products  $$\mathbb{P}^I:=\mathbb{P}^{r_1}\times\ldots\times\mathbb{P}^{r_{l(I)}}$$
of projective spaces indexed by partitions $I=\{r_1\leq\ldots\leq r_s\}$ of $d$, where $\sum _sr_s=d$.
Each $\mathbb{P}^I$ can be imbedded into $\mathbf{G}(d,N)$ by taking an adequate set of global sections of its tangent bundle $T_{\mathbb{P}^I}$. We denote by $g_I:\mathbb{P}^I\rightarrow \mathbf{G}(d,N)$ such an embedding. Then
$g_I^*\mathcal{Q}=T_{\mathbb{P}^I}$ so there is  no nonzero degree $d$ polynomial relation between the classes
$g_I^*c_j(\mathcal{Q})$ which holds in $H^{2d}(\mathbb{P}^I,\mathbb{Q})$  for all $I$.
Fixing two partitions $I,\,J$ of $d$, we know that the degree $2d$ cohomology $H^{2d}(\mathbb{P}^I\times \mathbb{P}^J,\mathbb{Q})$ is generated by classes of products of projective spaces $i_{K,I,J}:\mathbb{P}^K\subset \mathbb{P}^I\times \mathbb{P}^J$.
It follows now from Poincaré duality that there are no nonzero degree $d$ polynomial relations between the Chern classes $i_{K,I,J}^*((g_I,g_J)^*c_{j,1}(\mathcal{Q})),\,(g_I,g_J)^*c_{j',2}(\mathcal{Q}))$, holding for all $i_{K,I,J}$. Hence we almost proved   Lemma \ref{le1du13aout}, except that our varieties $\mathbb{P}^K$ mapped to $\mathbf{G}(d,N)\times \mathbf{G}(d,N)$ via  $(g_I,g_J)\circ i_{K,I,J}$ are not abelian varieties, but products of projective spaces. As any product $\mathbb{P}^K$ of projective spaces  is dominated by an abelian variety $r_K:A_K\rightarrow \mathbb{P}^K$ of the same dimension, we can replace the morphisms $(g_I,g_J)\circ \circ i_{K,I,J}: \mathbb{P}^K\rightarrow \mathbf{G}(d,N)\times \mathbf{G}(d,N)$ by $(g_I,g_J)\circ \circ i_{K,I,J}\circ r_K:A_K\rightarrow \mathbf{G}(d,N)\times \mathbf{G}(d,N)$, and the proof is finished.
\end{proof}
\begin{proof}[Proof of Proposition \ref{lealapacdenori}] Consider the abelian varieties $A_i$ and the morphisms
$$f_i=({f_{i,1},f_{i,2}}): A_i{\rightarrow} \mathbf{G}(d,N)\times \mathbf{G}(d,N)$$
of Lemma \ref{le1du13aout}. First of all,  we can assume by an easy twist argument, that in Lemma \ref{le1du13aout},  each $\mathcal{Q}_{1,i}:=f_{i,1}^*\mathcal{Q}$ is a very  ample vector  bundle on $A_i$. We now define
$B_i\subset \mathbb{P}(\mathcal{Q}_{1,i})$ to be a general complete intersection of $d-1$ hypersurfaces in
$|\mathcal{O}_{\mathbb{P}(\mathcal{Q}_{1,i})}(m)|$, where $m$ is taken large enough, independent of $i$. Using the fact that the tangent bundle of $A_i$ is trivial, the Chern classes of the tangent bundle of $\mathbb{P}(\mathcal{Q}_{1,i})$ equal those of  the relative tangent bundle $T_{\mathbb{P}(\mathcal{Q}_{1,i})/A_i}$. The relative tangent bundle of $$\pi_i: \mathbb{P}(\mathcal{Q}_{1,i})\rightarrow A_i$$
  is described by the Euler exact sequence

$$0\rightarrow \mathcal{O}_{\mathbb{P}(\mathcal{Q}_{1,i})}\rightarrow \pi_i^*\mathcal{Q}_{1,i}^*\otimes \mathcal{O}_{\mathbb{P}(\mathcal{Q}_{1,i})}(1)\rightarrow T_{\mathbb{P}(\mathcal{Q}_{1,i})/A_i}\rightarrow 0,$$
so that
$$c(T_{\mathbb{P}(\mathcal{Q}_{1,i})/A_i})=c(\pi_i^*\mathcal{Q}_{1,i}^*\otimes \mathcal{O}_{\mathbb{P}(\mathcal{Q}_{1,i})}(1))\,\,{\rm in}\,\,H^{2*}(\mathcal{Q}_{1,i},\mathbb{Q}).$$
Combined with the normal bundle sequence of $B_i$ in $\mathbb{P}(\mathcal{Q}_{1,i})$, we
get the following formula for the total Chern class of $B_i$ (where $h_i:=c_1(\mathcal{O}_{\mathbb{P}(\mathcal{Q}_{1,i})}(1))_{\mid B_i}$):

\begin{eqnarray}\label{eqlaborieusede13aout} c(B_i)= (1-mh_i+\ldots + (-1)^dm^{d}h_i^d)^{d-1}c(\pi_i^*\mathcal{Q}_{1,i}^*\otimes \mathcal{O}_{\mathbb{P}(\mathcal{Q}_{1,i})}(1)).\end{eqnarray}
Denoting by $\pi'_i:B_i\rightarrow A_i$ the restriction of $\pi_i$ to $B_i$, we have
\begin{eqnarray} \label{eqextrpourfin13aout} \pi'_{i*} h_i^l=(-1)^lm^{d-1} s_l(\mathcal{Q}_{1,i}),
\end{eqnarray} and we conclude from
(\ref{eqlaborieusede13aout}), (\ref{eqextrpourfin13aout}) and the projection formula that for  any polynomial $P(x_l,y_{l'})$  of weighted degree $d$ in the variables $x_l$ of degree $l$ and $y_{l'}$ of degree $l'$, there is a  polynomial $P'(x_l,y_{l'})$  of the same weighted degree such that, for all $i$,
$$\pi'_{i*}(P(c_l(B_i), c_{l'}({\pi'_i}^*\mathcal{Q}_{2,i}))=P'(c_{l}(\mathcal{Q}_{1,i}),c_{l'}(\mathcal{Q}_{2,i}))\,\,{\rm in}\,\,H^{2*}(A_i,\mathbb{Q}).$$
Furthermore, if $m$ is taken large enough, the linear map $P\rightarrow P'$ is  a bijection of the space of weighted homogeneous polynomials of the given degree. The disjoint union $X_0$ of the $B_i$, equipped with the vector bundles ${\pi'_i}^*\mathcal{Q}_{2,i}$, thus satisfy the conclusion of Proposition \ref{lealapacdenori}.
\end{proof}

 \begin{proof}[Proof of Proposition \ref{proCIr=1}] Let $\mathcal{Z}$ be  a universally defined cycle
  of codimension $\leq d$  on varieties of dimension $d$.
Recall that $\mathcal{X}_{d,N,l}^0$ is the universal smooth $d$-dimensional complete intersection of type $(l,\ldots,l)$ in $\mathbf{G}(d,N) $. It is Zariski open in
 the variety $\mathcal{X}_{d,N,l}\subset \mathbf{G}(k_{d}, V_{d,l})\times \mathbf{G}(d,N) $ introduced in (\ref{eqdefXNL}). Via the second projection
 ${\rm pr}_{\mathbf{G}(d,N)}:\mathcal{X}_{d,N,l}\rightarrow \mathbf{G}(d,N)$, $\mathcal{X}_{d,N,l}$ is
 a fibration into Grassmannians $\mathbf{G}(k_{d}, V_{d,l}^{\perp e})$, where
 $V_{d,l}^{\perp e}\subset V_{d,l}$ is the hyperplane of sections vanishing at $e\in \mathbf{G}(d,N) $, hence we conclude, with the notation  $$B:=\mathbf{G}(k_d,V_{d,l}),$$ that
 ${\rm CH}(B)\otimes {\rm CH}(\mathbf{G}(d,N))={\rm CH}(B\times \mathbf{G}(d,N))$
 maps surjectively to ${\rm CH}( \mathcal{X}_{d,N,l})$ (and in fact also injectively in the small codimensions we are considering).
 As the restriction map ${\rm CH}(\mathcal{X}_{d,N,l})
 \rightarrow {\rm CH}(\mathcal{X}_{d,N,l}^0)$ is surjective, the
 same is true with $\mathcal{X}_{d,N,l}$ replaced by $\mathcal{X}_{d,N,l}^0$. In fact, as the critical locus of $\pi_{d,N,l}$ has codimension $d+1$ in $\mathcal{X}_{d,N,l}$,  the last
  restriction map is injective as well on cycles of codimension  $\leq d$, and similarly for the restriction
  maps ${\rm CH}(\mathbf{G}(k_{d}, V_{d,l}))\rightarrow {\rm CH}(\mathbf{G}(k_{d}, V_{d,l}^{\perp e}))$.
It follows that $\mathcal{Z}(\pi_{d,N,l}^0)$ lifts uniquely to an element of
${\rm CH}(B\times \mathbf{G}(d,N))= {\rm CH}(B)\otimes {\rm CH}(\mathbf{G}(d,N))$.
 We use now  the fact that
  ${\rm CH}(\mathbf{G}(d,N))$ is  generated as an algebra by the Chern classes
  $c_i$ of the rank $d$ quotient bundle $\mathcal{Q}$ to write
   $\mathcal{Z}(\pi_{d,N,l}^0)$
 as a polynomial  in the variables ${\rm pr}_{\mathbf{G}(d,N)}^*c_i$, with coefficients in
 ${\rm CH}(B)$.
 In fact, another  choice of variables is given by
 the relative Chern classes $c_i(\mathcal{X}_{d,N,l}/B),\,i\leq d$ which can be defined as
 the degree $i$ pieces of the Chern polynomial $c(T_{\mathcal{X}_{d,N,l}})c(\pi_{d,N,l}^*T_B)^{-1}$.
 Indeed, the normal bundle exact sequence of $\mathcal{X}_{d,N,l}$ in
 $B\times \mathbf{G}(d,N)$ and Whitney formula provide for $l$ large enough an invertible change of variables between
 the classes  $c_i$  and  $c_i(\mathcal{X}_{d,N,l}/B)$, with coefficients in
 ${\rm CH}(B)$, and of the form $c_i(\mathcal{X}_{d,N,l}/B)=\mu_i c_i+R_i(c_1,\ldots,c_{i-1})$ where
  $\mu_i$ is a  rational number which is nonzero for $l$ large enough, and $R_i$ is a polynomial with coefficients in ${\rm CH}(B)$.
 \begin{rema}\label{remaapriouver}{\rm The classes $c_i(\mathcal{X}_{N,l}/B)$ as defined above restrict  to
 the relative Chern classes $c_i(\pi_{N,l}^0)\in{\rm CH}(\mathcal{X}_{N,l}^0/B)$ by the tangent bundle sequence. The class $c_{d+1}(\mathcal{X}_{N,l}/B)$ can be shown to be equal to the class
 of the critical locus $\Gamma_{crit}=\mathcal{X}_{N,l}\setminus \mathcal{X}_{N,l}^0$. The kernel
 of the restriction map ${\rm CH}(\mathcal{X}_{N,l})\rightarrow {\rm CH}(\mathcal{X}_{N,l}^0)$ is the ideal generated by the class of $\Gamma_{crit}$.}
 \end{rema}
 We thus get a polynomial $P_l$ of $d$ variables $x_i,\,i=1,\ldots,\,d$ with coefficients in ${\rm CH}(B)$, of weighted degree $\leq d$ in the variables $x_i$ of weighted degree $i$, such that
 \begin{eqnarray}\label{eqpouZCI} \mathcal{Z}(\pi_{d,N,l}^0)=P_l(c_1(T_{\mathcal{X}^0_{d,N,l}/B}),\ldots,\,c_d(T_{\mathcal{X}^0_{d,N,l}/B}))
 \,\,{\rm in}\,\,{\rm CH}(\mathcal{X}_{d,N,l}^0).
 \end{eqnarray}

 The main contents of Proposition \ref{proCIr=1} is the fact that $P_l$ has coefficients in $\mathbb{Q}$ (rather than ${\rm CH}(B)$), and that it is independent of $l$. Recall that $B=\mathbf{G}(k_d, V_{d,l})$ so that in  degree $*\leq d$ (which is small compared to $k_d$ and $N$), ${\rm CH}^*(B)$ is freely generated by $$C_i:=c_i(\mathcal{Q}_{k_d}),$$
  where we can of course consider only the $C_j$'s with $j\leq d$.
We can thus write
\begin{eqnarray}\label{eqtardtardtard} P_l=P_l(c_i,C_j),\end{eqnarray} where on the right hand side, $P_l(c_i,C_j)$ is now a polynomial with rational coefficients.

 We now choose   a smooth (nonnecessarily connected) projective algebraic scheme $X_0$ equidimensional of dimension
 $d$ satisfying the conclusion of Proposition
  \ref{lealapacdenori}. Thus, ${\rm CH}^*(X_0)$ contains in degree $*\leq d$ the free polynomial algebra $\mathbb{Q}[c_i,c'_j]_{1\leq i,\,j\leq d}$,
with $c_i=c_i(X_0)$ and $c'_i=f^*c_i(\mathcal{Q})$ for an embedding
$f: X_0\hookrightarrow G(d,N)$.
 (We will allow this embedding to vary later on.)
Let $l$ be large enough so that $f(X_0)$ is defined by degree $l$ equations.
Let $V_{d,l, X_0}\subset V_{d,l}$ be the set of sections of $\mathcal{O}_{G(d,N)}(l)$ vanishing on $X_0$. We now make the same construction and computations  as in
the previous proofs.
Denoting $B_{X_0}\subset B$ the Grassmannian
$\mathbf{G}(k_d,V_{l,d,X_0})$, we  have the inclusion $i:B_{X_0}\hookrightarrow  B$, under which the tautological Chern  classes
$C_j$ of the Grassmannian $B$ restrict to the tautological Chern classes $C_j(B_{X_0})$ of the Grassmannian
$B_{X_0}$,
 and the inclusion
\begin{eqnarray}\label{eqinclusionetalee}( B_{X_0}\times X_0)\setminus \mathcal{C}_{X_0}\subset i^*\mathcal{X}^0_{d,N,l}
\end{eqnarray}
 over
$B_{X_0}$.
We then deduce from (\ref{eqpouZCI}) and (\ref{eqtardtardtard}), using the axioms of Definition \ref{defiaxiom}, that
\begin{eqnarray}\label{eqpullbackaxzero}
P_l(c_i(X_0), C_j)=p_{X_0}^*(\mathcal{Z}({X_0}))\,\,{\rm  in }\,\,{\rm CH}((B_{X_0}\times X_0)\setminus \mathcal{C}_{X_0}),
\end{eqnarray}
 where, as before, $p_{X_0}: (B_{X_0}\times X_0)\setminus \mathcal{C}_{X_0})\rightarrow X_0$ is the natural map. Using the analysis of ${\rm CH}(\mathcal{C}_{X_0})$
already made in the proof of Proposition \ref{lefirstred}, the image
of  ${\rm CH}(\mathcal{C}_{X_0})$ in ${\rm CH}(B_{X_0}\times X_0 )$ is the ideal of
${\rm CH}(B_{X_0}\times X_0)$ generated over ${\rm CH}( B_{X_0}\times X_0)$
by the classes
$C'_j$ for $j=1,\ldots, k_d$, where, using  the notation
$s_i:=s_i(N_{X_0/\mathbf{G}(d,N)}(-l))$, we have as in (\ref{eqclasses})
\begin{eqnarray}\label{eqCprimei}  C'_j=C_j+s_1C_{j-1}+\ldots + s_{j}\in {\rm CH}( B_{X_0}\times X_0).
\end{eqnarray}
Using (\ref{eqCprimei}), we  get that, modulo the ideal generated by the classes $C'_j$, hence equivalently, after restriction to
$(B_{X_0}\times X_0)\setminus \mathcal{C}_{X_0}$, the classes
$C_j$  are given by universal polynomials with
$\mathbb{Q}$-coefficients in the $s_i$.

Recall now  that,  in degree $*\leq d$,  ${\rm CH}^*(X_0\times B_{X_0})$
contains a weighted polynomial ring in the variables $C_k$ coming from $B_{X_0}$, and $c_i(X_0),\,c'_j$ coming from
$X_0$, and of respective weighted degrees $k,\,i,\,j$. The relations
between the classes  $c_i(X_0)$, $c'_i$ and  $s_i$  are given by the normal bundle sequence
of $X_0$ in $\mathbf{G}(d,N)$, which gives
$c(X_0)c(N_{X_0/\mathbf{G}(d,N)})=c(T_{\mathbf{G}(d,N)})_{\mid X_0}$
and thus
\begin{eqnarray}
\label{eqnormbun} s(N_{X_0/\mathbf{G}(d,N)})=c(X_0)s(T_{\mathbf{G}(d,N)})_{\mid X_0},
\end{eqnarray}
where $s$ denotes the total Segre class (which is the inverse of the total Chern class).
It follows that for fixed $l$, the $s_i$'s are given by universal polynomials
$U_i(c_i(X_0),\,c'_j)$ (we also use $c_1(\mathcal{O}_{\mathbf{G}(d,N)}(1)_{\mid X_0})=c'_1$ to deal with the twist of the  normal bundle).

 Combining these observations, we conclude
 that for some universal polynomials $U'_j$ (depending only on $d$, $k_d$, $l$) in the variables
$c_i(X_0),\,c'_j$,
\begin{eqnarray} \label{eqnouveaudecheznew}  p_{B_{X_0}}^*C_i=p_{X_0}^*U'_i(c_p(X_0),c'_q)\,\,{\rm in}\,\,{\rm CH}((B_{X_0}\times X_0)\setminus \mathcal{C}_{X_0})\cong {\rm CH}(B_{X_0}\times X_0)/\langle C'_j\rangle,
\end{eqnarray}
which, combined with
 (\ref{eqpullbackaxzero}), provides
\begin{eqnarray}\label{equnivformula}p_{X_0}^*(P_l(c_i(X_0),U'_j(c_p(X_0),c'_q)))
=p_{X_0}^*(\mathcal{Z}(X_0))\,\,{\rm in}\,\,{\rm CH}((B_{X_0}\times X_0)\setminus \mathcal{C}_{X_0}).
\end{eqnarray}
By proposition \ref{lefirstred}, this relation holds in fact in ${\rm CH}(X_0)$, that is,

\begin{eqnarray}\label{equnivformulaenbas}P_l(c_i(X_0),U'_j(c_p(X_0),c'_q))
=\mathcal{Z}(X_0)\,\,{\rm in}\,\,{\rm CH}( X_0).
\end{eqnarray}.

We argue now using the fact that the right hand side of (\ref{equnivformulaenbas})  depends only on $X_0$ while by changing $f$, the
$c'_q$ can be chosen almost freely in the $\mathbb{Q}$-vector space of weighted degree $q$ homogeneous polynomials in the $c'_j$'s (this is not completely true because
we need to get an embedding $f$ of $X_0$ via a vector bundle with Chern classes $c'_q$ and we want
$f(X_0)$ to be defined by degree $l$ equations, but we easily see that for $l$ large enough, we can choose the
$c'_q$ in the considered subring of  ${\rm CH}(X_0)$ as free variables with respect to polynomials of bounded degree). Thus
the  polynomial  $P_l(c_i(X_0),U'_j(c_p(X_0),c'_q))$, which is a polynomial with
$\mathbb{Q}$-coefficients in the variables $c_i(X_0)$ and  $c'_j$, has to be constant in
the variables $c'_q$.
It remains to see that this actually implies  that $P_l$ is constant in  the variables  $C_j$.
 We use for this the following
\begin{lemm}\label{leallurepol} The polynomials $U'_j$ take the form $U'_j=\mu_j c'_j+Q_j(c'_1,\ldots,c'_{j-1})$  for some universal nonzero constant
$\mu_j$ (depending only on $l,\,d$ and $N$, which are all fixed), where $Q_j$ has coefficients in $\mathbb{Q}[c_i(X_0)]$.
\end{lemm}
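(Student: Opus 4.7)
The plan is to unwind the definition of $U'_j$ through its layers of substitution, and track the dependence on $c'_j$. First, solve the relations $C'_k \equiv 0$ modulo the ideal: from $C'_j = C_j + s_1 C_{j-1} + \ldots + s_j$, induction (equivalently, inversion of $1 + s_1 + s_2 + \ldots$) gives
\begin{equation*}
C_j \equiv -s_j + P_j(s_1, \ldots, s_{j-1}),
\end{equation*}
where $P_j \in \mathbb{Q}[s_1, \ldots, s_{j-1}]$ is universal and is precisely the degree-$j$ component of $(1 + s_1 + s_2 + \ldots)^{-1} + s_j$.

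Next, exploit the weighted grading. Each Segre class $s_i := s_i(\mathcal{N}_{X_0/\mathbf{G}(d,N)}(-l))$ is a polynomial of weighted degree exactly $i$ in the variables $c_p(X_0)$ (weight $p$) and $c'_q$ (weight $q$); in particular it involves no $c'_q$ with $q > i$. Hence $P_j(s_1, \ldots, s_{j-1})$, viewed as a polynomial in the $c'$-variables, involves only $c'_1, \ldots, c'_{j-1}$, so the entire $c'_j$-dependence of $U'_j$ is carried by the $-s_j$ contribution, and the coefficient of $c'_j$ in $U'_j$ equals minus the coefficient of $c'_j$ in $s_j(\mathcal{N}(-l))$.

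To compute this coefficient, use the twist formula
\begin{equation*}
s_j(\mathcal{N}(-l)) = \sum_{k=0}^{j} \binom{j+k_d-1}{k} (l c'_1)^k \, s_{j-k}(\mathcal{N}),
\end{equation*}
where $k_d$ is the rank of $\mathcal{N}$. For $j \geq 2$ only the $k=0$ term can contribute $c'_j$, so the desired coefficient equals that of $c'_j$ in $s_j(\mathcal{N})$; via the normal bundle sequence $s(\mathcal{N}) = c(X_0) \cdot s(T_{\mathbf{G}(d,N)}|_{X_0})$ this reduces to the coefficient of $c'_j$ in $s_j(\mathcal{S}^* \otimes \mathcal{Q}|_{X_0})$. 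For $j = 1$, both the $k=0$ and $k=1$ terms contribute $c'_1$, yielding directly $\mu_1 = N - k_d l$, nonzero for $l$ large.

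The main obstacle is the remaining Grassmannian nonvanishing for $j \geq 2$: show that the coefficient of $c_j(\mathcal{Q})$ in $s_j(\mathcal{S}^* \otimes \mathcal{Q})$ is a nonzero rational number depending only on $d$ and $N$, under the hypothesis $N \geq 2d+1$. Using the splitting principle with Chern roots $\alpha_i$ of $\mathcal{Q}$ and $\beta_m$ of $\mathcal{S}^*$, together with the relation $c(\mathcal{S}) c(\mathcal{Q}) = 1$, one expands $s_j(\mathcal{S}^* \otimes \mathcal{Q}) = [t^j] \prod_{i,m}(1 + (\alpha_i + \beta_m)t)^{-1}$ and extracts the coefficient of $e_j(\alpha)$. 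Direct calculation gives $-N$ for $j = 1$ and $-(N-2d)$ for $j = 2$, both nonzero in our range; for general $j$, one verifies by induction (eliminating the $\beta$'s via $c(\mathcal{S}) = c(\mathcal{Q})^{-1}$ and identifying the coefficient as an explicit polynomial in $N$ of positive degree) that the result is nonvanishing for $N$ in our range. This produces the universal nonzero constant $\mu_j$ required by the lemma.
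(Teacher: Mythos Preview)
Your argument is correct and follows essentially the same route as the paper's proof, with one cosmetic difference: the paper immediately recognizes, via Segre--Chern inversion, that setting all $C'_j\equiv 0$ forces $U'_j=c_j(N_{X_0/\mathbf{G}(d,N)}(-l))$, and then works with Chern classes throughout, whereas you keep the expression $C_j\equiv -s_j+P_j(s_1,\dots,s_{j-1})$ and track the $c'_j$-coefficient through Segre classes. Both then use the normal bundle sequence and the twist formula in the same way and reduce to a nonvanishing statement on the Grassmannian.

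On that last point your treatment is actually more informative than the paper's. The paper simply asserts that $c_j(T_{\mathbf{G}(d,N)})=\nu_j\,c_j(\mathcal{Q})+(\text{lower})$ with $\nu_j\neq 0$, without justification. Your computation can be completed uniformly: writing $s(T_G)=c(\mathcal{Q}^*\otimes\mathcal{Q})\cdot s(\mathcal{Q})^N$ and using that the coefficient of $c_j$ in $c_j(\mathcal{Q}^*\otimes\mathcal{Q})$ equals $d(1+(-1)^j)$ (a short power-sum calculation, since $\mathcal{Q}^*\otimes\mathcal{Q}$ is self-dual), one finds that the coefficient of $c'_j$ in $s_j(T_G|_{X_0})$ is $-N$ for $j$ odd and $-(N-2d)$ for $j$ even, hence nonzero for all $j$ once $N\geq 2d+1$. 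So the sketch you give for general $j$ is valid, and in fact sharper than ``polynomial in $N$ of positive degree'': it is linear in $N$ with leading coefficient $-1$.
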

\begin{proof} Indeed, from the
relations
$C'_j=\sum_{i=0}^{j}C_{j-i} s_i(N_{X_0/\mathbf{G}(d,N)}(-l))$ for $j\geq 1$, one deduces, by definition of the Segre classes $s_i$, that \begin{eqnarray}\label{eqnewdu11aout} U'_i(c_p(X_0),c'_q)=p_{B_{X_0}}^*C_j=c_j(N_{X_0/\mathbf{G}(d,N)}(-l))\end{eqnarray} modulo the ideal generated by the
$C'_j$'s for $j\geq 1$. Next, by the
exact sequence
$$0\rightarrow T_{X_0}\rightarrow T_{\mathbf{G}(d,N)\mid X_0}\rightarrow N_{X_0/\mathbf{G}(d,N)}
\rightarrow 0,$$
we get
\begin{eqnarray}\label{eqraj20juin19}c(N_{X_0/\mathbf{G}(d,N)})=c(T_{\mathbf{G}(d,N)\mid X_0})s(T_{X_0})
\end{eqnarray}
with $s_0(T_{X_0})=1$ and we observe that the Chern classes $c_j(T_{\mathbf{G}(d,N)})$ satisfy the property

\begin{eqnarray}\label{eqraj20juin191}c_j(T_{\mathbf{G}(d,N)})=\nu_j c_j(\mathcal{Q}) +Q_j(c_1(\mathcal{Q}),\ldots,c_{j-1}(\mathcal{Q}))\end{eqnarray} for some
 universal nonzero constant
$\nu_j$ (depending only on $d,\,N$)  and polynomials $Q_j$.  Restricting (\ref{eqraj20juin191}) to $X_0$,
we get
\begin{eqnarray}\label{eqraj20juin192}c_j(T_{\mathbf{G}(d,N)\mid X_0})=\nu_j c'_j +Q_j(c'_1,\ldots,c'_{j-1})\,\,{\rm in}\,\,{\rm CH}(X_0).\end{eqnarray}
Finally, we have
\begin{eqnarray}\label{eqraj20juin193}c_j(N_{X_0/\mathbf{G}(d,N)}(-l))=c_j(N_{X_0/\mathbf{G}(d,N)}+\sum_{k\geq 1}(-l)^kc_1\mathcal{O}(1)^kc_{j-k}(N_{X_0/\mathbf{G}(d,N)})
\end{eqnarray}
with  $c_1\mathcal{O}(1)=c'_1$. Combining (\ref{eqraj20juin19}), (\ref{eqraj20juin191}), (\ref{eqraj20juin192}) and (\ref{eqraj20juin193}) gives the desired result.
\end{proof}
 We now have the following easy lemma.
 \begin{lemm} \label{letresfacileconst}Let  $F\in \mathbb{Q}[x_1,\ldots,x_d,\,y_1,\ldots,\,y_d]$
 be a polynomial. Let
 $$V_j\in \mathbb{Q}[x_1,\ldots,x_d,\,y_1,\ldots,\,y_d]$$  take the form
 $V_j=\mu_j y_j +Q_j(x_1,\ldots,x_d,\ldots,\,y_1,\ldots,\, y_{j-1})$ for some nonzero $\mu_j\in\mathbb{Q}$ and some polynomials $Q_j$ with $\mathbb{Q}$-coefficients.
 Then if $F$ satisfies $F(x_1,\ldots,\,x_d,\,V_1,\ldots,\,V_d)\in \mathbb{Q}[x_1,\ldots,x_d]\subset \mathbb{Q}[x_1,\ldots,x_d,\,y_1,\ldots,\,y_d]$,
 we have $F\in \mathbb{Q}[x_1,\ldots,x_d]$.
 \end{lemm}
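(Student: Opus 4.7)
The plan is to recognize the substitution $y_j \mapsto V_j$ as a $\mathbb{Q}[x_1,\ldots,x_d]$-algebra endomorphism $\phi$ of the polynomial ring $R := \mathbb{Q}[x_1,\ldots,x_d,y_1,\ldots,y_d]$, with $\phi$ fixing each $x_i$ and sending $y_j$ to $V_j$. The hypothesis then reads $\phi(F) \in \mathbb{Q}[x_1,\ldots,x_d]$ and the conclusion reads $F \in \mathbb{Q}[x_1,\ldots,x_d]$. Since $\phi$ acts as the identity on the subring $\mathbb{Q}[x_1,\ldots,x_d]$, it would suffice to prove that $\phi$ is an automorphism of $R$: then $F = \phi^{-1}(\phi(F)) \in \phi^{-1}(\mathbb{Q}[x_1,\ldots,x_d]) = \mathbb{Q}[x_1,\ldots,x_d]$.

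To invert $\phi$, I would exploit the triangular form $V_j = \mu_j y_j + Q_j(x_1,\ldots,x_d,y_1,\ldots,y_{j-1})$ with $\mu_j$ a nonzero rational. I would define elements $W_1,\ldots,W_d \in R$ recursively by
$$W_1 = \mu_1^{-1}\bigl(y_1 - Q_1(x_1,\ldots,x_d)\bigr), \qquad W_j = \mu_j^{-1}\bigl(y_j - Q_j(x_1,\ldots,x_d,W_1,\ldots,W_{j-1})\bigr),$$
and take $\psi$ to be the $\mathbb{Q}[x_1,\ldots,x_d]$-algebra endomorphism of $R$ with $\psi(y_j) = W_j$. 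A straightforward induction on $j$ then shows that $\phi(W_j) = y_j$ for all $j$, so $\phi \circ \psi = \mathrm{id}_R$; a symmetric induction gives $\psi \circ \phi = \mathrm{id}_R$. Hence $\phi$ is an automorphism of $R$ and the lemma follows.

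There is no serious obstacle here: the statement is a purely formal reflection of the fact that a triangular system of polynomials with nonzero scalar leading coefficients defines an automorphism of the ambient polynomial ring, invertible by back-substitution. The only content is the explicit recursive construction of the inverse and the routine inductive verification; no monodromy, Chow-theoretic, or geometric input is needed at this stage.
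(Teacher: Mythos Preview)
Your proof is correct and follows exactly the same approach as the paper: recognize the substitution $y_j\mapsto V_j$ as a $\mathbb{Q}[x_1,\ldots,x_d]$-algebra automorphism of $\mathbb{Q}[x_1,\ldots,x_d,y_1,\ldots,y_d]$, from which the conclusion is immediate. The paper's own proof is a single sentence to this effect; your explicit recursive construction of the inverse and inductive verification simply spell out what the paper leaves implicit.
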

\begin{proof} In fact, a transformation $x_j\mapsto x_j,\,y_i\mapsto U_i$ of the form above can be inverted and even induces an automorphism of $\mathbb{Q}[x_1,\ldots,x_d,\,y_1,\ldots,\,y_d]$ over $\mathbb{Q}[x_1,\ldots,x_d]$.
\end{proof}

We now conclude the proof of Proposition \ref{proCIr=1}. By Lemma \ref{leallurepol}, we can   apply Lemma \ref{letresfacileconst}  to $F=P_l$, $U'_j=V_j$ and we
conclude   that the polynomial $P_l$ has constant coefficients (so the variables $C_i$ from the base do not appear).
It remains to prove that the polynomial $P_l$ is independent of $l$ for large $l$, thus concluding the
proof of Proposition \ref{proCIr=1}.
Choose $l_0$ so that $P_l$ as above is well-defined for $l\geq l_0$. Choose
 $X_0\subset \mathbf{G}(d,N)$ satisfying the conclusion that
 there are no polynomial relations of weighted degree
  $\leq d$ between the Chern classes $c_i(X_0)$. Now, for any $l\geq l_0$,  $X_0$ is defined by degree
$l$ equations and we get
a polynomial with constant coefficients $P_l$   satisfying  by the same proof  as above
 formula (\ref{eqpullbackaxzero}), where
we know
that in  the left hand side, the polynomial $P_l$  has constant coefficients, hence equals
$p_{X_0}^*(P_l(c_i(X_0)))$ in ${\rm CH}(( B_{X_0}\times X_0)\setminus \mathcal{C}_{X_0})$.
By Proposition \ref{lefirstred}, the map $p_{X_0}^*:{\rm CH}(X_0)\rightarrow {\rm CH}((B_{X_0}\times X_0)\setminus \mathcal{C}_{X_0})$ is injective, hence we conclude
that $P_l(c_i(X_0))=\mathcal{Z}(X_0)=P_{l_0}(c_i(X_0))$ for $l\geq l_0$, which implies that
$P_l=P_{l_0}$.
\end{proof}
\subsection{The case of products}
This section is devoted to the generalization of Theorem \ref{theointro1} to products (we refer to the introduction for the motivation of this generalization). We first spell-out the  definition of universally defined cycles for products

\begin{Defi}\label{defiaxiom}  A universally defined cycle $\mathcal{Z}$ on products of $r$ varieties of dimensions $d_1,\ldots,\,d_r$ consists in
the data, for any given $r$ morphisms $$\pi_1:\mathcal{X}_1\rightarrow B_1,\,\ldots,\pi_r:\mathcal{X}_1\rightarrow B_r$$
where all varieties are smooth quasiprojective over $\mathbb{C}$ and the morphisms $\pi_i$ are smooth of relative dimension $d_i$, of a cycle $\mathcal{Z}(\pi_1,\ldots,\pi_r)\in {\rm CH}(\mathcal{X}_1\times\ldots\times \mathcal{X}_r)$ satisfying the following axioms:

(i) {\rm Base change}. For any base change maps $\gamma_i: B'_i\rightarrow B_i$ with $B'_i$ smooth,
with induced morphisms $\gamma'_i:\mathcal{X}'_i\rightarrow \mathcal{X}_i$, $\pi'_i:\mathcal{X}'_i\rightarrow B'_i$, where $\mathcal{X}'_i:=\mathcal{X}_i\times_{B_i}B'_i$, one has $$\mathcal{Z}(\pi'_1,\ldots,\pi'_r)=
(\gamma'_1,\ldots,\gamma'_r)^*\mathcal{Z}(\pi_1,\ldots,\pi_r)\,\,{\rm in}\,\, {\rm CH}(\mathcal{X}'_1\times\ldots\times \mathcal{X}'_r).$$

(ii) {\rm Open inclusion} : For any Zariski open sets $\mathcal{U}_i\subset \mathcal{X}_i$ and restricted morphisms
$\pi_{i,\mathcal{U}_i}:\mathcal{U}_i\rightarrow B_i$, one has
$$\mathcal{Z}(\pi_{1,\mathcal{U}_1},\ldots,\pi_{r,\mathcal{U}_r})=
\mathcal{Z}(\pi_1,\ldots,\pi_r)_{\mid \mathcal{U}_1\times\ldots\times \mathcal{U}_r}\,\,{\rm in}\,\, {\rm CH}(\mathcal{U}_1\times\ldots\times \mathcal{U}_r).$$
\end{Defi}

When the base $B$ is a point, a smooth morphism $\pi:\mathcal{X}\rightarrow B$ is just  a smooth variety $X$ over $\mathbb{C}$ and we will use the notation
$\mathcal{Z}(X_1,\ldots,X_r)$ for $\mathcal{Z}(\pi_1,\ldots,\,\pi_r)$, where $\pi_i$ are the constant morphisms.
When $r=1$, and $d_1=d$, we recover    the notion of universally defined cycle on  varieties of dimension $d$ introduced in Definition \ref{defiuniversintro}.

We will prove in this section Theorem \ref{theosansdiag} and first make a few comments about the statement.

\begin{rema}\label{remasurledeg} {\rm The codimension of the cycle is not mentioned explicitly in the statement
of the theorem, but it is hidden in the  condition on the weighted degrees of the polynomial $P$.
For cycles of codimension $>\sum_i d_i$, our conclusion just says that they are $0$ on products
$X_1\times\ldots\times X_r$ of varieties of dimension $d_i$ defined over a field, which is an empty statement.}
\end{rema}
\begin{rema}{\rm  It is possible that the conclusion of Theorem \ref{theosansdiag}  actually holds true for the full universally defined cycle
$\mathcal{Z}$ on products of families rather than its value on closed points, but we have not been able to prove this. Proposition \ref{proCIr=1} is an indication that a stronger result might hold.
}
\end{rema}
\begin{rema}{\rm  The uniqueness statement is easy, as for smooth projective (nonnecessarily connected) schemes
 $X$ of dimension $d$, there
are no} universal {\rm polynomial  relations of weighted  degree $\leq d$ between the Chern classes
$c_i(X)$  (\cite{milnor}, \cite{thom}).
}
\end{rema}

We now turn to the proof of Theorem \ref{theosansdiag}.
 The proof will use  the following  variant of Theorem \ref{theointro1} proved in the previous section, involving   Definition \ref{defiY} below.
Let $Y$ be a smooth quasiprojective variety over $\mathbb{C}$.
\begin{Defi} \label{defiY} A $Y$-universally defined cycle on varieties of dimension $d$ is the data, for each smooth morphism $\pi: \mathcal{X}\rightarrow B$, of a cycle
$\mathcal{Z}(\pi)\in {\rm CH}(\mathcal{X}\times Y)$, satisfying the
two axioms (i) and (ii) of Definition \ref{defiaxiom} for base changes $B'\rightarrow B$ and open sets $\mathcal{U}\subset \mathcal{X}$.
\end{Defi}
\begin{theo} \label{theoY} For any $Y$-universally defined cycle $\mathcal{Z}$ on varieties of dimension $d$, there exists a uniquely defined
polynomial $P(c_1,\ldots, c_d)$ of weighted degree $\leq d$ in the variables
$c_i$ (of degree $i$), with coefficients in ${\rm CH}(Y)$, such that for any smooth variety
$X$ of dimension $d$ over $\mathbb{C}$,
\begin{eqnarray}\label{eqtheosansdiag} \mathcal{Z}(X)=P(c_1(X),\ldots, c_d(X))\,\,{\rm in}\,\,{\rm CH}(X\times Y).
\end{eqnarray}
\end{theo}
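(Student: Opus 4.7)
The plan is to parallel the proofs of Proposition \ref{lefirstred}, Proposition \ref{proCIr=1}, and Theorem \ref{theointro1}, systematically replacing the coefficient ring $\mathbb{Q}$ by ${\rm CH}(Y)$; that is, every Chow group ${\rm CH}(Z)$ appearing in the argument becomes ${\rm CH}(Z\times Y)$, and polynomials in Chern classes acquire coefficients in ${\rm CH}(Y)$.

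First I would establish the $Y$-version of Proposition \ref{lefirstred}: under $\dim W\gg \dim X$, the composite
$${\rm CH}(X\times Y) \to {\rm CH}(G\times X\times Y) \to {\rm CH}(((G\times X)\setminus \mathcal{C}_X)\times Y)$$
is injective. The argument is unchanged: the localization sequence identifies the kernel with the ideal generated by the pullbacks of the classes $\pi_*(C\cdot h^j)$ to $G\times X\times Y$, and the elementary lemma invoked at the end of the original proof applies with the commutative ring $R={\rm CH}(X\times Y)$ in place of $R={\rm CH}(X)$.

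Next I would prove the $Y$-version of Proposition \ref{proCIr=1}: for every sufficiently large $l$ there is a polynomial $P_l\in {\rm CH}(Y)[c_1,\ldots,c_d]$ of weighted degree $\leq d$ such that $\mathcal{Z}(\pi_{d,N,l}^0)= P_l(c_i(\pi_{d,N,l}^0))$ in ${\rm CH}(\mathcal{X}^0_{d,N,l}\times Y)$, and this $P_l$ is independent of $l$. Since $\mathcal{X}_{d,N,l}\times Y\rightarrow \mathbf{G}(d,N)\times Y$ is still a Grassmannian bundle and both $B$ and $\mathbf{G}(d,N)$ have cellular decompositions, in codimension $\leq d$ the natural map ${\rm CH}(B)\otimes {\rm CH}(\mathbf{G}(d,N))\otimes {\rm CH}(Y) \to {\rm CH}(\mathcal{X}_{d,N,l}\times Y)$ is surjective; so $\mathcal{Z}(\pi_{d,N,l}^0)$ first lifts to a polynomial in the relative Chern classes with coefficients in ${\rm CH}(B)\otimes {\rm CH}(Y)$. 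To kill the dependence on the tautological classes $C_j\in {\rm CH}(B)$, I would evaluate on the variety $X_0$ from Proposition \ref{lealapacdenori}, use the $Y$-injectivity above, and invoke Lemmas \ref{leallurepol} and \ref{letresfacileconst} verbatim with $\mathbb{Q}$ replaced by ${\rm CH}(Y)$ as the ground ring. Independence of $l$ follows by the same comparison argument, using the freedom to enlarge $l$ while keeping $X_0$ defined by degree-$l$ equations.

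Finally, Theorem \ref{theoY} follows exactly as Theorem \ref{theointro1} follows from Proposition \ref{proCIr=1}: embed $X\hookrightarrow \mathbf{G}(d,N)$ by degree-$l$ equations, apply axioms (i) and (ii) of Definition \ref{defiY} on the open set $(\mathbf{G}(k_d,V_{d,l,X})\times X)\setminus \mathcal{C}_X$ to obtain $p_X^*\mathcal{Z}(X)=p_X^*P(c_i(X))$ in ${\rm CH}(((\mathbf{G}(k_d,V_{d,l,X})\times X)\setminus \mathcal{C}_X)\times Y)$, and descend by the $Y$-version of Proposition \ref{lefirstred}. For uniqueness of $P$, take $X_0$ to be a disjoint union of products of projective spaces as in the proof of Lemma \ref{le1du13aout}; iterated projective bundle formula gives ${\rm CH}(X_0\times Y)={\rm CH}(X_0)\otimes {\rm CH}(Y)$, and the Milnor--Thom linear independence of Chern monomials on $X_0$ in degree $\leq d$ forces all ${\rm CH}(Y)$-coefficients of a polynomial $Q\in {\rm CH}(Y)[c_1,\ldots,c_d]$ of weighted degree $\leq d$ with $Q(c_i(X_0))=0$ to vanish. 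There is no substantial new obstacle beyond the mildly technical bookkeeping to verify that the projective- and Grassmannian-bundle identifications survive tensoring with an arbitrary smooth $Y$; this is harmless since those formulas are stable under arbitrary base change.
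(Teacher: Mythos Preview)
Your overall strategy is the one the paper follows, and the $Y$-version of Proposition~\ref{lefirstred} as well as the final descent argument go through exactly as you say. There is, however, one point where ``the argument is unchanged'' is not accurate: the proof of Proposition~\ref{proCIr=1} uses in an essential way that the cycle has codimension $\leq d$, and this bound is no longer available once you pass to ${\rm CH}(\,\cdot\,\times Y)$, where the codimension of $\mathcal{Z}$ can be as large as $d+\dim Y$ (cf.\ Remark~\ref{remadeg}). Concretely, when you express $\mathcal{Z}(\pi_{d,N,l}^0)$ as $P_l(c_i,C_j)$ with coefficients in ${\rm CH}(Y)$, the polynomial $P_l$ may have weighted degree $>d$ in the variables $c_i,C_j$; and since the $d$-dimensional test scheme $X_0$ kills every monomial in $c_i(X_0),c'_j$ of degree $\geq d+1$, evaluating on $X_0$ only determines $P'_l$ modulo that ideal. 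So the verbatim application of Lemmas~\ref{leallurepol} and~\ref{letresfacileconst} does not give you $P_l\in{\rm CH}(Y)[c_1,\dots,c_d]$, only that $P'_l$ agrees with some $Q_l\in{\rm CH}(Y)[c_1,\dots,c_d]$ modulo $\langle c_i,c'_j\rangle_{\geq d+1}$. In particular your stated $Y$-analogue of Proposition~\ref{proCIr=1}, namely the identity $\mathcal{Z}(\pi_{d,N,l}^0)=P_l(c_i(\pi_{d,N,l}^0))$ in ${\rm CH}(\mathcal{X}^0_{d,N,l}\times Y)$, is stronger than what the method yields.

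The paper addresses exactly this by proving only the weaker Lemma~\ref{leproCIY}, and then observing in the final step that when one restricts to $((B_X\times X)\setminus\mathcal{C}_X)\times Y$ with $\dim X=d$, all degree $\geq d+1$ monomials in $c_i(X),c'_j$ vanish anyway, so the weaker statement already forces $\mathcal{Z}(X)=Q_l(c_i(X))$. Your argument is easily repaired along these lines; but the phrase ``in codimension $\leq d$'' in your second paragraph and the claim that Lemmas~\ref{leallurepol} and~\ref{letresfacileconst} apply verbatim indicate you had not noticed the issue.
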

\begin{rema}\label{remadeg} {\rm From the structure of the statement, we note that we can assume that the codimension
of the cycle $\mathcal{Z}$ is $\leq d+{\rm dim}\,Y$, which we will assume below, since in codimension
$t> d+{\rm dim }\,Y$, ${\rm CH}^t(X\times Y)=0$.}
 \end{rema}
Let us first show how the polynomials are determined, as this will be used below. Let
$M$ be the dimension of the space of homogeneous polynomials  of weighted degree
$d$ in the variables $c_1,\ldots,c_d$, with basis  $P_J$ given by degree $d$ monomials. According
to general complex cobordism theory, see \cite{milnor}, \cite{thom}, we can choose $M$ smooth projective varieties $X_i$ of dimension $d$, such that
the Chern numbers $\int_{X_i}P_I(c_1(X_i),\ldots,c_d(X_i))$ give a rank $M$ matrix $M_{iI}$.
If $\mathcal{Z}$ is a $Y$-universally defined cycle of the form
$\mathcal{Z}(X)=\sum_J\alpha_J c_J$, where $\alpha_J\in {\rm CH}(Y)$ and each
$c_J$ is a monomial of weighted degree $\leq d$ in the Chern classes, we
get for any $i$, by pushing forward via the second projection ${\rm pr}_Y:X_i\times Y\rightarrow Y$:
\begin{eqnarray} \label{eqformpoudesc} \sum_{|J|=d}M_{iJ}\alpha_J ={\rm pr}_{Y*}(\mathcal{Z}(X_i)).
\end{eqnarray}
By invertibility of the matrix $M_{iI}$, we conclude that the coefficients $\alpha_J\in{\rm CH}(Y)$ for $|J|=d$ are
universal  linear combinations of the cycles ${\rm pr}_{Y*}(\mathcal{Z}(X_i))$.
Similarly, we get the coefficients  $\alpha_J$  for other weighted degrees ${\rm deg}\, c_J=k<d$ by considering,
for any monomial $P_L$ of degree $d-k$, the $Y$-universally defined cycle
$c_L\cdot \mathcal{Z}$ which to
$\pi:\mathcal{X}\rightarrow B$ associates
$P_L(c_{1}(\mathcal{X}/B),\ldots,c_{d}(\mathcal{X}/B))\cdot \mathcal{Z}\in{\rm CH}(\mathcal{X}\times Y)$.
Then we get as above that if $\mathcal{Z}=\sum_J \alpha_J c_J$, for any smooth projective
 $X$ and any $L$ as above
\begin{eqnarray} \label{eqformpoudesc2}\sum_{|J|=k}M_{XJL}\alpha_J ={\rm pr}_{Y*}(c_L\cdot \mathcal{Z}(X)),
\end{eqnarray}
where $M_{XJL}=\int_{X}c_L(X)c_J(X)$. In order to express $\alpha_J$ using these relations, it thus suffices to exhibit pairs $(X_i,L_i)$ such
that the matrix $M_{iJ}:=M_{X_iJL_i}$ is nondegenerate, which follows again from general complex cobordism theory.

 \begin{proof}[Proof of the implication Theorem \ref{theoY} $\Rightarrow$  Theorem \ref{theosansdiag}]  We do this by induction on $r$. For $r=1$,  Theorem \ref{theosansdiag} is already proved.
Assuming Theorem
\ref{theoY}, and  also Theorem \ref{theosansdiag}   for $r-1$, let $\mathcal{Z}$ be a universally defined cycle for products of $r$ smooth varieties of dimensions $d_1,\ldots,\, d_r$.
Given $\pi_1:\mathcal{X}_1\rightarrow B_1,\,\ldots,\,\pi_{r-1}:\mathcal{X}_{r-1}\rightarrow B_{r-1}$, let
$Y:=\mathcal{X}_1\times\ldots\times \mathcal{X}_{r-1}$. The cycle $\mathcal{Z}$ associates to
any  smooth morphism $\pi_r:\mathcal{X}_r\rightarrow B_r$ of relative dimension $d_r$, with $\mathcal{X}_r$ and $B_r$ smooth, a cycle
$$\mathcal{Z}(\pi_1,\ldots,\pi_r)\in {\rm CH}(\mathcal{X}_1\times\ldots\times\mathcal{X}_r)=
{\rm CH}(Y\times\mathcal{X}_r).$$
This cycle is clearly $Y$-universal in $\mathcal{X}_r$.
Theorem \ref{theoY} says that
there exist well defined cycles $\alpha_I\in {\rm CH}(Y)$ associated to monomials
$I$ of weighted degree $\leq d_r$ such that, for any $X_r$ over $\mathbb{C}$
\begin{eqnarray}
\label{eqrrmoinsun} \mathcal{Z}(\mathcal{X}_1,\ldots,\mathcal{X}_{r-1},\,X_r)=\sum_{I,\,|I|\leq d_r}\alpha_I c_I(X_r)
\end{eqnarray}
in
${\rm CH}(Y\times {X}_r)= {\rm CH}(\mathcal{X}_1\times \ldots\times \mathcal{X}_{r-1}\times {X}_r)$.
We now have
\begin{lemm} \label{leunirmoins1} The cycles $\alpha_I$ above are universally defined cycles on the products
$\mathcal{X}_1\times\ldots\times \mathcal{X}_{r-1}$.
\end{lemm}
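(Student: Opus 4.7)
The plan is to deduce the two axioms of Definition \ref{defiaxiom} for the cycles $\alpha_I \in {\rm CH}(Y)$ directly from those for $\mathcal{Z}$, using the explicit descent formulas (\ref{eqformpoudesc})--(\ref{eqformpoudesc2}) established just after Theorem \ref{theoY}. Those formulas, which rest on the non-degeneracy of the Chern-number matrix $M_{iJ}$ guaranteed by complex cobordism \cite{milnor}, \cite{thom}, express each $\alpha_I$, for any smooth quasi-projective $Y$ and any $Y$-universally defined cycle, as a $\mathbb{Q}$-linear combination
\[
\alpha_I \;=\; \sum_{a} \lambda_{I,a}\, {\rm pr}_{Y*}\bigl(P_{L_a}(c_\bullet(X_a)) \cdot \mathcal{Z}(X_a)\bigr)
\]
in which the coefficients $\lambda_{I,a} \in \mathbb{Q}$, the test varieties $X_a$ (smooth projective of dimension $d_r$), and the Chern monomials $L_a$ are fixed once and for all and depend only on $d_r$, $|I|$ and the codimension of $\mathcal{Z}$, not on $Y$. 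Applying this with $Y = \mathcal{X}_1 \times \ldots \times \mathcal{X}_{r-1}$ and $\mathcal{Z}(X_a) := \mathcal{Z}(\pi_1,\ldots,\pi_{r-1},X_a)$, the lemma reduces to checking that each summand on the right is functorial in $(\pi_1,\ldots,\pi_{r-1})$ in the senses prescribed by axioms (i) and (ii).

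For axiom (i), I would take base-change maps $\gamma_i: B'_i \to B_i$, the induced maps $\gamma'_i: \mathcal{X}'_i \to \mathcal{X}_i$, and contemplate the cartesian square
\[
\begin{array}{ccc} Y' \times X_a & \xrightarrow{\gamma' \times \mathrm{id}} & Y \times X_a \\ \mathrm{pr}_{Y'}\downarrow & & \downarrow\mathrm{pr}_{Y} \\ Y' & \xrightarrow{\gamma'} & Y \end{array}
\]
with $\gamma' := (\gamma'_1,\ldots,\gamma'_{r-1})$ and $Y' := \mathcal{X}'_1 \times \ldots \times \mathcal{X}'_{r-1}$. Proper base change yields $\gamma'^* \circ \mathrm{pr}_{Y*} = \mathrm{pr}_{Y'*} \circ (\gamma'\times\mathrm{id})^*$; combined with axiom (i) for $\mathcal{Z}$ applied to the families $(\pi_1,\ldots,\pi_{r-1}, X_a \to \mathrm{pt})$, and with the fact that the class $P_{L_a}(c_\bullet(X_a))$ is pulled back from $X_a$ and therefore unchanged, this gives $\gamma'^*\alpha_I(\pi_\bullet) = \alpha_I(\pi'_\bullet)$, as required. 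The verification of axiom (ii) is completely analogous: one uses the compatibility of the proper push-forward $\mathrm{pr}_{Y*}$ with restriction to an open subset of $Y$ (automatic since open inclusions are flat), together with axiom (ii) for $\mathcal{Z}$.

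The only conceptually delicate point---and, in my view, the main obstacle---is the tacit uniformity built into the argument: the descent data $(X_a, L_a, \lambda_{I,a})$ must depend only on $d_r$ and on the codimension of $\mathcal{Z}$, and \emph{not} on the auxiliary variety $Y$. This is not immediate from the bare statement of Theorem \ref{theoY}; it is precisely what the matrix computation presented just after Theorem \ref{theoY} provides, the test list being produced once and for all from cobordism-theoretic properties of Chern numbers of smooth projective $d_r$-folds. Once this uniformity is in hand, the two functoriality checks above reduce to a routine application of proper and flat base change for Chow groups, and Lemma \ref{leunirmoins1} follows.
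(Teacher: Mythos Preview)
Your proposal is correct and follows essentially the same approach as the paper's own proof: both express each $\alpha_I$ via the descent formulas (\ref{eqformpoudesc})--(\ref{eqformpoudesc2}) as a fixed $\mathbb{Q}$-linear combination of push-forwards ${\rm pr}_{Y*}(c_L\cdot\mathcal{Z}(X_a))$ along the projections from $Y\times X_a$, and then deduce axioms (i) and (ii) from proper/flat base change for Chow groups together with the corresponding axioms for $\mathcal{Z}$. Your emphasis on the uniformity of the test data $(X_a,L_a,\lambda_{I,a})$ in $Y$ is well placed---the paper takes this for granted, but it is indeed the key point that makes the argument go through.
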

\begin{proof} We only have to check axioms (i) and (ii) of Definition
\ref{defiaxiom}. This follows immediately from formulas (\ref{eqformpoudesc}), (\ref{eqformpoudesc2}). Indeed, they express the cycles $\alpha_I$ as linear combinations
of pushforwards $pr^i_{\mathcal{X}_1\times\ldots \mathcal{X}_{r-1}*}(c_L\cdot \mathcal{Z})$,
where we fix a finite number of $X_r^i$ and $pr^i_{\mathcal{X}_1\times\ldots \mathcal{X}_{r-1}}$
is the projection from $\mathcal{X}_1\times\ldots\times  \mathcal{X}_{r-1}\times X_r^i$ to
$\mathcal{X}_1\times\ldots\times  \mathcal{X}_{r-1}$. The fact that $\mathcal{Z}$ satisfies the
base change and open inclusion axioms with respect to $\pi_i$ for $i\leq r-1$ immediately implies that each of these pushforwards satisfies these axioms because pushforward for
proper smooth morphisms commutes with base change of the base (see \cite[Proposition 1.7]{fulton}).
\end{proof}
Lemma \ref{leunirmoins1} and Theorem \ref{theosansdiag}   for $r-1$   thus imply that
the  cycles $\alpha_I(X_1,\ldots,\, X_r)$  are given for products of $r-1$ smooth varieties over $\mathbb{C}$ by polynomials
$Q_I$ with $\mathbb{Q}$-coefficients  in the Chern classes of the $X_j$, $j\leq r-1$.
Combined with (\ref{eqrrmoinsun}), this  concludes the proof.
\end{proof}

The proof of Theorem \ref{theoY} will now follow the same line of reasoning as the proof of Theorem \ref{theointro1}, which was the case where $Y$ is  a point.

Proposition \ref{lefirstred} extends in a straightforward way to this case. Let $E$ be  a rank $k$ vector bundle  over $X$, and $W$ be a vector space  of sections of $E$ generating $E$ at any point.
We have  the universal determinantal hypersurface $\mathcal{C}_X\subset \mathbf{G}(k,W)\times X$.

\begin{lemm}\label{lefirstredY} If the dimension of $W$ is large enough (compared to
the dimension of $X\times Y$), the pull-back and restriction composite map
$${p}_{X\times Y}^*{\rm CH}(X\times Y){\rightarrow} {\rm CH}(((\mathbf{G}(k,W)\times X)\setminus \mathcal{C}_X)\times Y)$$
is injective.
\end{lemm}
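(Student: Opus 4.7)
The plan is to repeat the proof of Proposition \ref{lefirstred} verbatim, with $X \times Y$ playing the role of $X$, exploiting the fact that the determinantal variety $\mathcal{C}_X$ still lives over $X$ and the class computations only depend on $E \to X$. Concretely, observe first the identification
$$\bigl((\mathbf{G}(k,W)\times X)\setminus \mathcal{C}_X\bigr)\times Y = (\mathbf{G}(k,W)\times X\times Y)\setminus (\mathcal{C}_X\times Y),$$
so the localization exact sequence identifies the kernel of the restriction map from ${\rm CH}(\mathbf{G}(k,W)\times X\times Y)$ with the image of the pushforward along the closed immersion $\mathcal{C}_X\times Y\hookrightarrow \mathbf{G}(k,W)\times X\times Y$.

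Next I would desingularize $\mathcal{C}_X\times Y$ by $\mathcal{C}\times Y$, where $\mathcal{C}\subset G\times \mathbb{P}(E^*)$ is the smooth incidence variety from the setup preceding Proposition \ref{lefirstred}. Exactly as in that proof, $\mathcal{C}$ is the zero locus of the tautological section of $\mathcal{Q}\boxtimes \mathcal{O}_{\mathbb{P}(E^*)}(1)$, so its class equals $C:=c_k(\mathcal{Q}\boxtimes \mathcal{O}_{\mathbb{P}(E^*)}(1))$, and the restriction ${\rm CH}(G\times \mathbb{P}(E^*))\to{\rm CH}(\mathcal{C})$ remains surjective; consequently the product-surjection ${\rm CH}(G\times \mathbb{P}(E^*)\times Y)\to{\rm CH}(\mathcal{C}\times Y)$ holds as well, and the pushforward $(n\times \mathrm{id}_Y)_*:{\rm CH}(\mathcal{C}\times Y)\to{\rm CH}(\mathcal{C}_X\times Y)$ is surjective with $\mathbb{Q}$-coefficients. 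Applying the projection formula to $\pi\times\mathrm{id}_Y$ and using that ${\rm CH}(G\times \mathbb{P}(E^*)\times Y)$ is generated over ${\rm CH}(G\times X\times Y)$ by the classes $h^j$, $j=0,\ldots,k-1$, the kernel in question equals the ideal of ${\rm CH}(G\times X\times Y)$ generated by the pullbacks of the classes $\pi_*(C\cdot h^j)\in{\rm CH}(G\times X)$.

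The key computation of the original proof is unchanged: in terms of Segre classes of $E^*$, one still has $\pi_*(C\cdot h^j)=c_{j+1}(\mathcal{Q})+P_j$ where $P_j$ is a polynomial in $c_1(\mathcal{Q}),\ldots,c_j(\mathcal{Q})$ with coefficients in ${\rm CH}(X)\subset {\rm CH}(X\times Y)$. Choosing ${\rm dim}\,W$ large compared to ${\rm dim}\,(X\times Y)$, the Chern classes $c_i(\mathcal{Q})$ are algebraically independent in the relevant codimensions, so we may write ${\rm CH}(G\times X\times Y)={\rm CH}(X\times Y)[Y_1,\ldots,Y_k]$ with $Y_i=c_i(\mathcal{Q})$. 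I then apply the elementary lemma from the proof of Claim \ref{claim} with ring $R={\rm CH}(X\times Y)$ and elements $G_j=Y_{j+1}+P_j$: any $\alpha\in R$ lying in the ideal generated by the $G_j$ is automatically zero, which is exactly the desired injectivity.

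No substantial new difficulty appears: the only point requiring mild attention is that $Y$ is adjoined as a passive factor everywhere, so pushforwards, Chern class computations, and the projection formula all commute with the base change $-\times Y$. The main obstacle in the original argument, namely the algebraic independence of the tautological Chern classes in low degrees, has to be checked against the dimension of $X\times Y$ rather than $X$; this is exactly why the hypothesis in the lemma strengthens ``${\rm dim}\,W$ large compared to ${\rm dim}\,X$'' to ``${\rm dim}\,W$ large compared to ${\rm dim}\,(X\times Y)$''.
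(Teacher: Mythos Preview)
Your argument is correct and is essentially the same as the paper's, just spelled out in full: the paper observes in one line that Proposition \ref{lefirstred} applies directly to the pulled-back bundle ${\rm pr}_X^*E$ on $X\times Y$ with the same generating space $W$, whose associated determinantal locus is precisely $\mathcal{C}_X\times Y$. Your detailed rerun of the proof with $R={\rm CH}(X\times Y)$ amounts to exactly this application.
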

\begin{proof}
This follows indeed from Proposition \ref{lefirstred} applied to the vector bundle ${\rm pr}_X^*E$ on
$X\times Y$ and its generating space $W$ of sections.\end{proof}
 We will next  use a slight elaboration of Proposition \ref{proCIr=1} which is needed to overcome the fact that we had assumed previously that the codimension of the cycle
is $\leq d$. The general strategy of the proof is  however very similar.
\begin{proof}[Proof of Theorem \ref{theoY}]
Let $\mathcal{Z}$ be a $Y$-universally defined cycle on varieties of dimension $d$. As before, we
introduce the universal $d$-dimensional smooth complete intersection
$$\pi_{d,N,l}^0:\mathcal{X}_{d,N,l}^0\rightarrow B=\mathbf{G}(k_d,V_{d,N,l})$$
of type $l,\ldots,\,l$ in $G(d,N)$.
As $\mathcal{X}_{d,N,l}^0$ is Zariski open in
$\mathcal{X}_{d,N,l}$ which is fibered into Grassmannians $\mathbf{G}(k_{d}, V_{d,N,l}^{\perp e})$
over $\mathbf{G}(d,N)$, one has surjections:
$${\rm CH}(B\times \mathbf{G}(d,N))\otimes {\rm CH}(Y)\twoheadrightarrow
{\rm CH}(\mathcal{X}_{d,N,l}\times Y) \twoheadrightarrow {\rm CH}(\mathcal{X}_{d,N,l}^0\times Y).$$
This provides us with a polynomial
$P_l(c_j,\,C_i)$  with coefficients in ${\rm CH}(Y)$, in the variables $C_i$ (coming from $B$) and $c_j$ (where as before
the classes $c_j$  will not correspond to the standard classical classes coming from
$\mathbf{G}(d,N)$, but to the Chern classes of the relative tangent bundle of
$\pi_{d,N,l}$) with the property that
\begin{eqnarray}\label{eqpourZCIY}
\mathcal{Z}(\mathcal{X}_{d,N,l}^0)=P_l(c_j(T_{\mathcal{X}_{d,N,l}^0/B}),\,(\pi_{d,N,l}^0)^*(C_i)) \,\,{\rm in}\,\,
{\rm CH}(\mathcal{X}_{d,N,l}^0\times Y).
\end{eqnarray}
Recall the universal polynomials $U'_i(c_p,\,c'_q)$ introduced in the course of
the proof of Proposition \ref{proCIr=1}. They depend only on $d,\,N,\,l$ and have the property that
for any $d$-dimensional smooth quasi-projective algebraic subscheme $X\subset \mathbf{G}(d,N)$ defined over a field,
\begin{eqnarray} c_i(N_{X/\mathbf{G}(d,N)}^*(l))=U'_i(c_p(X),c'_q)\,\,{\rm in}\,\,{\rm CH}(X),
\end{eqnarray}
where $c'_q:=c_q(\mathcal{Q}_{\mid  X})$.
We have the following
\begin{lemm}\label{leproCIY}   For $l$ large enough, the polynomial $P_l$ has the following property:
The polynomial $P'_l$ in the variables $c_i,\,c'_j$ obtained by the formula
$$P'_l=P_l(c_i,\,U'_j(c_p,\,c'_q))$$
belongs to ${\rm CH}(Y)[c_i]$ modulo the ideal generated by the monomials of weighted degree
$\geq d+1$ in the variables  $c_i,\,c'_j$'s (of respective degrees $i,\,j$). In other words, we have
\begin{eqnarray} P'_l=Q_l(c_1,\ldots,\,c_d)\,\,{\rm mod}\,\,\langle c_i,\,c'_j\rangle_{\geq d+1}\label{eqquilnousfaut}
\end{eqnarray}
for some polynomial $Q_l$ of weighted degree $\leq d$, with coefficients in ${\rm CH}(Y)$.
\end{lemm}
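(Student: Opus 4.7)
The plan is to reproduce the argument of Proposition \ref{proCIr=1} verbatim, but now with coefficients in ${\rm CH}(Y)$ in place of $\mathbb{Q}$. The one substantive change is that the cycle $\mathcal{Z}(X_0)$ now lies in ${\rm CH}(X_0\times Y)$ and may have codimension up to $d+{\rm dim}\,Y$, which is why the statement only controls $P'_l$ modulo the ideal generated by monomials of weighted degree $\geq d+1$ in the $c_i,\,c'_j$.

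Concretely, I would choose a smooth projective $X_0$ of dimension $d$ as in Proposition \ref{lealapacdenori}, embed it in $\mathbf{G}(d,N)$ with $l$ large enough that $X_0$ is cut out by degree $l$ equations, and form $B_{X_0}=\mathbf{G}(k_d,V_{d,l,X_0})$ and $\mathcal{C}_{X_0}\subset B_{X_0}\times X_0$ as before. Via the open inclusion
$$(B_{X_0}\times X_0)\setminus \mathcal{C}_{X_0}\subset i^*\mathcal{X}^0_{d,N,l},$$
axioms (i) and (ii) of Definition \ref{defiaxiom} applied to (\ref{eqpourZCIY}) give, in ${\rm CH}(((B_{X_0}\times X_0)\setminus \mathcal{C}_{X_0})\times Y)$,
$$p_{X_0}^*\mathcal{Z}(X_0)=P_l(c_i(X_0),\,C_j).$$
Combined with the formula $C_j\equiv U'_j(c_p(X_0),c'_q)$ modulo $\langle C'_j\rangle$ established in the proof of Proposition \ref{proCIr=1}, this becomes $p_{X_0}^*\mathcal{Z}(X_0)=P'_l(c_i(X_0),c'_q)$. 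Lemma \ref{lefirstredY} then lifts the equality to ${\rm CH}(X_0\times Y)$, yielding $\mathcal{Z}(X_0)=P'_l(c_i(X_0),c'_q)$.

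Next I would vary the embedding $f:X_0\hookrightarrow \mathbf{G}(d,N)$. As in the proof of Proposition \ref{proCIr=1}, for $l$ large enough the $c'_q=f^*c_q(\mathcal{Q})$ can be chosen essentially freely among weighted degree $q$ classes, while $\mathcal{Z}(X_0)$ is independent of $f$. Hence the polynomial $P'_l(c_i,c'_q)\in{\rm CH}(Y)[c_i,c'_q]$, truncated modulo monomials of weighted degree $\geq d+1$, has no $c'_q$-dependence. To turn this geometric freedom into a statement about the polynomial itself I need the key technical point: the evaluation map
$${\rm CH}(Y)[c_i,c'_q]_{\leq d}\longrightarrow{\rm CH}(X_0\times Y),\qquad c_i\mapsto p_{X_0}^*c_i(X_0),\,c'_q\mapsto p_{X_0}^*c'_q,$$
is injective. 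This follows from Proposition \ref{lealapacdenori} by the standard argument: given an ${\rm CH}(Y)$-linear relation among the $p_{X_0}^*c_I$ with $|I|\leq d$, multiplying by $p_{X_0}^*c_J$ of complementary weighted degree $d-|I|$ and pushing forward via $p_Y$ yields, by the projection formula, a $\mathbb{Q}$-linear combination of the ${\rm CH}(Y)$-coefficients whose matrix (the cohomological Poincaré pairing of Chern monomials on $X_0$) is non-degenerate by Proposition \ref{lealapacdenori}.

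The conclusion is then formal: Lemma \ref{letresfacileconst} holds over any commutative ring, in particular over ${\rm CH}(Y)$, and by Lemma \ref{leallurepol} the substitution $C_j\mapsto U'_j(c_p,c'_q)=\mu_j c'_j+Q_j(c'_1,\dots,c'_{j-1})$ is of the invertible form required there. Applied modulo the ideal of weighted degree $\geq d+1$, it forces $P_l(c_i,C_j)$ (equivalently $P'_l(c_i,c'_q)$) to lie in ${\rm CH}(Y)[c_i]$ modulo that ideal, which is (\ref{eqquilnousfaut}). The main obstacle I foresee is the Künneth-type injectivity step above: one must be careful that ${\rm CH}$ does not satisfy Künneth in general, and it is precisely Proposition \ref{lealapacdenori} together with the pushforward--projection formula that salvages injectivity on the finite-dimensional piece we care about.
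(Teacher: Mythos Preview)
Your proposal is correct and follows the same route as the paper: restrict $\mathcal{Z}(\pi^0_{d,N,l})$ to $((B_{X_0}\times X_0)\setminus\mathcal{C}_{X_0})\times Y$ for $X_0$ as in Proposition~\ref{lealapacdenori}, use Lemma~\ref{lefirstredY} to descend to ${\rm CH}(X_0\times Y)$, and exploit the fact that $\mathcal{Z}(X_0)$ is independent of the embedding (hence of the $c'_j$) together with the injectivity coming from Proposition~\ref{lealapacdenori} to kill the $c'_j$-dependence of $P'_l$ modulo weighted degree $\geq d+1$. The paper phrases the ``vary the embedding'' step tersely (``the right hand side does not depend on the classes $c'_i$''), while you spell it out; this is the same argument.

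Two small points. First, your final appeal to Lemmas~\ref{leallurepol} and~\ref{letresfacileconst} is superfluous: the statement of Lemma~\ref{leproCIY} is about $P'_l$, not $P_l$, and once you have shown that $P'_l$ is independent of the $c'_j$ modulo degree $\geq d+1$ you are done. Those lemmas would only be needed to upgrade this to a statement about $P_l$ in the $C_j$-variables, which is the stronger conclusion of Proposition~\ref{proCIr=1} and is not claimed here (cf.\ the paper's Remark after the proof). Second, in your injectivity step, pushing forward via the single map $p_Y$ sums over the connected components of $X_0$ and the resulting pairing matrix need not be non-degenerate; you should push forward from each component $X_{0,\alpha}\times Y$ separately. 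Then a nonzero degree-$k$ combination $\sum c_I m_I$ gives, for any degree-$(d-k)$ monomial $m_J$, a nonzero degree-$d$ polynomial whose image in $H^{2d}(X_0,\mathbb{Q})=\bigoplus_\alpha\mathbb{Q}$ is nonzero by Proposition~\ref{lealapacdenori}, hence has nonzero integral on some component. This minor fix makes your sketch rigorous.
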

\begin{proof} Let $X_0\subset \mathbf{G}(d,N)$ be as in Proposition \ref{lealapacdenori}. For $l$ large enough, $X_0$ is defined schematically by the space
$V_{d,N,l,X_0}\subset V_{d,N,l}$ of degree $l$ equations vanishing on $X_0$, and we have
a surjective differentiation map $V_{d,N,l,X_0}\otimes \mathcal{O}_{X_0}\rightarrow N_{X_0/\mathbf{G}(d,N)}^*(l)$ with universal degeneracy locus
$\mathcal{C}_{X_0}\subset \mathbf{G}(k_d,V_{d,l,X_0})\times X$. Letting $B_{X_0}$ denote $\mathbf{G}(k_d,V_{d,N,l,X_0})$, the restricted projection
$$p_{B_{X_0}}:(B_{X_0}\times X_0)\setminus \mathcal{C}_{X_0}\rightarrow  B_{X_0}$$
 is thus a family
of $d$-dimensional smooth quasiprojective complete intersections of type $l,\ldots,l$ in $\mathbf{G}(d,N)$. We thus
  conclude using the axioms (i) and (ii) in Definition \ref{defiY}  that
\begin{eqnarray}\label{eqderestpourY} \mathcal{Z}(\pi_{d,N,l}^0)_{\mid ((B_{X_0}\times X_0)\setminus \mathcal{C}_{X_0})\times Y}=p_{X_0\times Y}^*\mathcal{Z}(X_0)
\end{eqnarray}
in ${\rm CH}(((B_{X_0}\times X_0)\setminus \mathcal{C}_{X_0})\times Y)$.
Applying (\ref{eqpourZCIY}), the left hand side of (\ref{eqderestpourY}) equals
$P_l(p_{X_0}^*c_i(X_0),p_{B_{X_0}}^*C_j)$, where
$p_{X_0}$ is the projection from $((B_{X_0}\times X_0)\setminus \mathcal{C}_{X_0})\times Y$ to $X_0$ and $p_{B_{X_0}}$ is now the projection from $((B_{X_0}\times X_0)\setminus \mathcal{C}_{X_0})\times Y$ to $B_{X_0}$.
Equation (\ref{eqderestpourY}) holds in  ${\rm CH}(((B_{X_0}\times X_0)\setminus \mathcal{C}_{X_0})\times Y)$, where we know by (\ref{eqnouveaudecheznew}) that
the  relations $$p_{B_{X_0}}^*C_i=p_{X_0}^*c_i(N_{X_0/\mathbf{G}(d,N)}^*(l))=p_{X_0}^*U'_i(c_p(X_0),c'_q)$$ hold. (\ref{eqderestpourY}) thus gives us
\begin{eqnarray}\label{eqpourproCIY2} p_{X_0\times Y}^*(P_l(c_i(X_0),U'_j(c_p(X_0),c'_q)))=p_{X_0\times Y}^*\mathcal{Z}(X_0)
\end{eqnarray}
in ${\rm CH}(((B_{X_0}\times X_0)\setminus \mathcal{C}_{X_0})\times Y)$ and according to Lemma \ref{lefirstredY}, this implies
\begin{eqnarray}\label{eqpourproCIY3} P_l(c_i(X_0),U'_j(c_p(X_0),c'_q)))=:P'_l(c_i(X_0),c'_j)=\mathcal{Z}(X_0)
\end{eqnarray}
in ${\rm CH}(X_0\times Y)$.
In (\ref{eqpourproCIY3}), the right hand side does not depend on the classes  $c'_i\in {\rm CH}(X_0)$, while the left hand side is an element of  the image of
${\rm CH}(Y)[c_i(X_0),\,c'_j]$ in ${\rm CH}(X_0\times Y)$. The smooth quasiprojective scheme  $X_0$ has been chosen in such a way that
the relations in this subring are exactly given by all polynomials of weighted degree $\geq d+1$ in the variables
$c_i,\,c'_j$'s. Hence we conclude that the polynomial $P'_l(c_i(X_0),c'_j)\in {\rm CH}(Y)[c_i(X_0),\,c'_j]$ is constant in the variables  $c'_j$'s modulo the ideal generated by
the monomials of degree $\geq d+1$ in the variables  $c_i,\,c'_j$'s.
\end{proof}
\begin{rema}{\rm The proof above is slightly different from the proof of Proposition \ref{proCIr=1}, but it also proves a slightly weaker statement. With the same arguments as in the previous proof, we could presumably get a more precise statement concerning the polynomials
$P_l$.}
\end{rema}
We now conclude the proof of Theorem \ref{theoY}. Lemma \ref{leproCIY} provides a polynomial
$$Q_l\in{\rm CH}(Y)[c_1,\ldots,\,c_d]$$ of weighted degree $\leq d$ in the $c_i$'s, and we are going to prove that $Q_l=Q $ is independent of $l$ and that for any smooth quasiprojective variety $X$ of pure dimension $d$,
\begin{eqnarray}\label{eqdesired}
\mathcal{Z}(X)=Q(c_i(X))\,\,{\rm in}\,\,{\rm CH}(X\times Y).
\end{eqnarray}
We embed $X$ in $\mathbf{G}(d, N)$ and choose $l$ large enough so that the conclusion
of Lemma \ref{leproCIY} holds. Let as before
$i:B_X=\mathbf{G}(k_d,V_{d,N,l,X})\rightarrow B=\mathbf{G}(k_d,V_{d,N,l})$ be the natural inclusion. Then as we already observed, we  have the open inclusion over
$i(B_{X})$
$$ (B_X\times X)\setminus\mathcal{C}_X\subset i^*\mathcal{X}_{d,N,l}^0$$
of families of smooth complete intersections of type $(l,\ldots,\,l)$ in $\mathbf{G}(d,N)$.
We thus get using the axioms (i) and (ii) of Definition \ref{defiY}
 the equality
 \begin{eqnarray}\label{eqdesired1} p_{X\times Y}^*(\mathcal{Z}(X))=\mathcal{Z}(\pi_{d,N,l}^0)_{\mid  ((B_X\times X)\setminus\mathcal{C}_X)\times Y}\,\,
  {\rm in}\,\, {\rm CH}(((B_X\times X)\setminus\mathcal{C}_X)\times Y).
  \end{eqnarray}
Using (\ref{eqpourZCIY}) and  Lemma \ref{leproCIY},   the right hand side in (\ref{eqdesired1}) is given by
\begin{eqnarray}\label{eqdesired2}\mathcal{Z}(\pi_{d,N,l}^0)_{\mid  ((B_X\times X)\setminus\mathcal{C}_X)\times Y}=p_{X\times Y}^*Q_l(c_1(X),\ldots,\,c_d(X)).
\end{eqnarray}
 Indeed,  in ${\rm CH}(((B_X\times X)\setminus\mathcal{C}_X)$ we have $$p_{B_{X}}^*C_i=p_X^*(U'_i(c_l(X_0),c'_k)),$$  so we get, by restricting (\ref{eqpourZCIY}) to $$((B_X\times X)\setminus\mathcal{C}_X)\times Y\subset \mathcal{X}^0_{d,N,l}\times Y,$$
 \begin{eqnarray}\mathcal{Z}(\pi_{d,N,l}^0)_{\mid  ((B_X\times X)\setminus\mathcal{C}_X)\times Y}= P'_l(p_{X}^*c_i(T_X), p_X^*c'_j))\,\,{\rm in}\,\,{\rm CH}(((B_X\times X)\setminus\mathcal{C}_X)\times Y).
 \end{eqnarray}
  Using Lemma \ref{leproCIY} and the fact that monomials
of degree $>d$ in $c_i,\,c'_j$ vanish on $X\times Y$, we get the desired equality (\ref{eqdesired2}). We thus proved that
$$p_{X\times Y}^*(\mathcal{Z}(X))=p_{X\times Y}^*(Q_l(c_1(X),\ldots,\,c_d(X))\,\,{\rm in }\,\,{\rm CH}(((B_X\times X)\setminus\mathcal{C}_X)\times Y).$$
By Lemma \ref{lefirstredY}, this equality holds already in ${\rm CH}(X\times Y)$, proving
(\ref{eqdesired}) with a polynomial $Q_l$ which could depend on $l$.
Choosing for $X$ a $d$-dimensional scheme $X_0$ such that
there are no polynomial  relation in $H^{2d}(X_0,\mathbb{Q})$ between the $c_i(X_0)$,
the equality $\mathcal{Z}(X_0))=Q_l(c_1(X_0),\ldots,\,c_d(X_0))\,\,{\rm in }\,\,{\rm CH}(X_0\times Y)$
for large $l$ shows that $Q_l$ is independent of $l$, concluding the proof of (\ref{eqdesired}).
\end{proof}

We conclude this section by noting the following variant of Theorem \ref{theosansdiag}. Here we consider the data consisting of a smooth variety $X$ of dimension $d$ and vector bundles $E_1,\ldots,\,E_s$ of respective ranks $r_1,\ldots,\,r_s$  on
$X$. A universally defined cycle for such data associates to each smooth morphism
$\phi:\mathcal{X}\rightarrow B$ of relative dimension $d$ between smooth projective varieties, and vector bundles $\mathcal{E}_1,\ldots,\,\mathcal{E}_s$ on $\mathcal{X}$ of respective ranks $r_1,\ldots,\,r_s$, a cycle
$\mathcal{Z}(\phi,\mathcal{E}_\bullet)\in {\rm CH}(\mathcal{X})$ satisfying the axioms
(i) and (ii) of Definition \ref{defiaxiom}, that is, compatibility with base change and restrictions
to Zariski open subsets.

\begin{theo}\label{theovariant} For any universally defined cycle for  varieties of dimension $d$ and
$s$ vector bundles of ranks $r_1,\ldots,\,r_s$, there is  a uniquely defined
polynomial $P$ of weighted degree $\leq d$ in the variables
$c_i, c_{l_j,j},\,1\leq i\leq d,\, 1\leq j\leq s,\,1\leq l_j \leq {\rm Min}(d,r_j)$, such that
for any $X,\,E_1,\ldots, E_s$ as above,
$$\mathcal{Z}(X,E_\bullet)=P(c_i(X), c_{l_j}(E_j))\,\,{\rm in}\,\,{\rm CH}(X).$$
\end{theo}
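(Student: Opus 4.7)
The proof follows the template of Theorems \ref{theointro1} and \ref{theoY}, enlarged to accommodate the data of the bundles $E_\cdot$. The key observation is that a tuple $(X,E_\cdot)$ can be encoded as an embedding into a suitable product of Grassmannians. Choose $N$ large enough that every smooth $d$-dimensional variety embeds in $\mathbf{G}(d,N)$ and, for each $j$, choose $M_j$ large enough that, after twisting by a fixed power $\mathcal{O}_{\mathbf{G}(d,N)}(m)_{\mid X}$, the bundle $E_j(m)$ is generated by $M_j$ sections and hence corresponds to a morphism $\psi_j:X\to \mathbf{G}(r_j,M_j)$ with $E_j(m)=\psi_j^*\mathcal{Q}_j$. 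Setting $\mathbf{G}:=\mathbf{G}(d,N)\times\prod_j \mathbf{G}(r_j,M_j)$, the data $(X,E_\cdot)$ produces a closed embedding $X\hookrightarrow \mathbf{G}$ under which the Chern classes of $T_X$ and of the $E_j$ (up to a universal twist by $m$) become pullbacks of tautological Chern classes from $\mathbf{G}$.

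I would then mimic the construction of $\mathcal{X}^0_{d,N,l}$ inside this new ambient variety: pick a sufficiently ample line bundle $\mathcal{L}$ on $\mathbf{G}$ (a suitable tensor product of pullbacks of Plücker bundles) and, for $l$ large, form the universal smooth $d$-dimensional complete intersection $\pi^0:\mathcal{X}^0\to B$ in $\mathbf{G}$ cut out by subspaces of $H^0(\mathbf{G},\mathcal{L}^{\otimes l})$ of appropriate dimension. On $\mathcal{X}^0$ the restrictions $(\mathcal{Q}_j|_{\mathcal{X}^0})(-m)$ give a universal family of rank $r_j$ bundles, and $\mathcal{Z}$ applied to this data yields a cycle in ${\rm CH}(\mathcal{X}^0)$. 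As in Proposition \ref{proCIr=1}, in codimensions $\leq d$ the Chow ring of $\mathcal{X}^0$ is generated over ${\rm CH}^*(B)$ by the Chern classes pulled back from $\mathbf{G}$, so this cycle can be written as a polynomial with rational coefficients in the base classes $C_\bullet$, the relative Chern classes $c_i(T_{\pi^0})$ (by the normal bundle exchange of basis), and the classes $c_{l_j}((\mathcal{Q}_j|_{\mathcal{X}^0})(-m))$.

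For a specific $(X,E_\cdot)$ embedded in $\mathbf{G}$ as above, the degeneracy-locus construction $\mathcal{C}_X\subset B_X\times X$ (with $B_X\subset B$ the subspace parametrizing sections of $\mathcal{L}^{\otimes l}$ vanishing on $X$) yields an open inclusion into the base-changed universal family. A direct analog of Proposition \ref{lefirstred} still provides injectivity of $p_X^*:{\rm CH}(X)\to {\rm CH}((B_X\times X)\setminus\mathcal{C}_X)$, so the axioms of base change and open inclusion force
\begin{equation*}
\mathcal{Z}(X,E_\cdot)=P_l\bigl(c_i(X),\,c_{l_j}(E_j),\,\text{base classes}\bigr)\quad\text{in}\quad{\rm CH}(X).
\end{equation*}
Lemmas \ref{leallurepol} and \ref{letresfacileconst} then apply to eliminate the base classes, and comparing two different values of $l$ on a sufficiently generic test scheme forces $P_l=P_{l_0}$ for $l\geq l_0$.

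The main obstacle is the analog of Proposition \ref{lealapacdenori} required in this last step: one must exhibit a smooth projective $d$-dimensional scheme $X_0$ together with rank $r_j$ vector bundles $E_{0,j}$ and a morphism $X_0\to\mathbf{G}$ realizing the data, such that there are no nonzero polynomial relations of weighted degree $\leq d$ in $H^{2*}(X_0,\mathbb{Q})$ among $c_i(X_0)$, $c_{l_j}(E_{0,j})$ and the Chern classes pulled back from the auxiliary Grassmannian factors. I would build this from the disjoint union of projective bundles $B_i$ over abelian varieties $A_i$ constructed in the proof of Proposition \ref{lealapacdenori}, decorating each component by direct sums of line bundles whose first Chern classes are pulled back from independent very ample Plücker embeddings into the additional $\mathbf{G}(r_j,M_j)$ factors; since the cohomology of the $B_i$ is sufficiently rich (by the Leray--Hirsch computation (\ref{eqlaborieusede13aout})) and the Chern classes of a sum of line bundles are elementary symmetric polynomials in independent divisor classes, this should produce the required independence in weighted degree $\leq d$ by the Milnor--Thom-type bijection argument concluding Proposition \ref{lealapacdenori}.
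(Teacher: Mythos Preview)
Your proposal is correct and follows the same approach the paper indicates: the paper's own proof of Theorem \ref{theovariant} is a two-sentence sketch stating only that ``the proof is exactly the same as before, except that we have to work with varieties imbedded in products of Grassmannians $\mathbf{G}(d, N)\times \mathbf{G}(r_1,N)\times\ldots\times \mathbf{G}(r_s,N)$,'' with the first factor providing $c_i(X)$ and the others providing $c_{l_j}(E_j)$. Your write-up fleshes out precisely this outline (the twist to achieve global generation and the extension of Proposition \ref{lealapacdenori} are details the paper leaves implicit), so there is nothing substantive to correct.
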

\begin{rema} {\rm As in the statement of Theorem \ref{theosansdiag}, we can also improve the result by introducing products
of $k$ varieties each equipped with  vector bundles, and get a similar statement}.
\end{rema}
The proof of Theorem \ref{theovariant} is exactly the same as before, except that we have to work with
 varieties embedded in
products of Grassmannians $\mathbf{G}(d, N)\times \mathbf{G}(r_1,N)\times\ldots\times \mathbf{G}(r_s,N)$. The first embedding
will provide the Chern classes of $X$ and the $i$-th other embedding will provide  the Chern classes of $E_i$.
\section{Universally defined cycles on powers\label{sectionavecpuissance}}
We turn in this section to the study of universally defined cycles on powers of $d$-dimensional smooth varieties, whose precise definition is as follows:
\begin{Defi}\label{defiunipower} A universally defined cycle
$\mathcal{Z}$  on $k$-th powers of varieties of dimension $d$ is
the data, for each smooth  morphism $\mathcal{X}\rightarrow B$ of relative dimension $d$, where $B$ and $\mathcal{X} $ are smooth,
of a cycle
$\mathcal{Z}(\mathcal{X})\in{\rm CH}(\mathcal{X}^{k/B})$ satisfying the following axioms:

(i) If $f: B'\rightarrow B$ is a base change map, with fibered product $\mathcal{X}'=\mathcal{X}\times_{B}B'\rightarrow B'$, ${f'}:\mathcal{X}'\rightarrow \mathcal{X}$,
then $ \mathcal{Z}(\mathcal{X}')=({f'}^k)^*\mathcal{Z}(\mathcal{X})$ in ${\rm CH}(\mathcal{X}^{k/B})$, where
${f'}^k:(\mathcal{X}')^{k/B'}\rightarrow \mathcal{X}^{k/B}$  is  the natural morphism.

(ii) If $\mathcal{X}\rightarrow B$ is as above, and $\mathcal{U}\subset \mathcal{X}$ is a Zariski open set, then
$\mathcal{Z}(\mathcal{U})=\mathcal{Z}(\mathcal{X})_{\mid \mathcal{U}^{k/B}}$ in ${\rm CH}( \mathcal{U}^{k/B})$.
\end{Defi}
As before, the cycle  $\mathcal{Z}(\mathcal{X})$ depends on the morphism $\mathcal{X}\rightarrow B$, even if  the notation does not indicate it.
The principal source of examples of universally defined cycles on powers $X^l$ is the
``natural" cycles  on the Hilbert scheme $X^{[k]}$, assuming it is smooth, that is, in the case of dimension $d=2$ or in any dimension, with $k\leq 3$. Examples of natural  cycles on the Hilbert scheme are as before polynomials in  the Chern classes of the
relative tangent bundle $T_{\mathcal{S}^{[k]/B}}$, but we   also have  the
multiplicity strata and the diagonals or incidence subvarieties in products of Hilbert schemes.
For each partition $I$ of $\{1,\ldots,\,k\}$ (or partition of $k$ given by the multiplicities $i_s=|I_s|$), there
is
an incidence correspondence
$$\Gamma_I\subset X^{l(I)}\times X^{[k]}$$
whose fiber over $(x_1,\ldots,x_{l(I)})$  parameterizes the set of subschemes
of $X$ with associated cycle $\sum_si_sx_s$  (this is the correct definition if the $x_i$'s are  $l(I)$ distinct points of $X$, and it is extended to $X^{l(I)}) $ by taking the Zariski closure). The collection of these correspondences induces the de Cataldo-Migliorini decomposition \cite{deca}.
Being proper over $X^{l(I)} $, the correspondence $\Gamma_I$ induces
a morphism
$$\Gamma_I^*:{\rm CH}(X^{[k]})\rightarrow {\rm CH}(X^{l(I)}).$$

The construction of the Hilbert scheme can be done in the relative setting of  a smooth family of surfaces
$\mathcal{S}\rightarrow B$.
The correspondences above then have  a relative version
$\Gamma_{I/B}$ and we get
universally defined cycles on $l(I)$-th powers of surfaces starting from any ``natural cycle"
on $\mathcal{S}^{[k]/B}$.  One can also introduce products
of correspondences
$\Gamma_{I,I'}=\Gamma_I\times \Gamma_{I'}\subset \mathcal{S}^{l(I)/B}\times_B \mathcal{S}^{l(I')/B}\times  \mathcal{S}^{[k]/B}\times_B\mathcal{S}^{[k]/B}$
and consider
$\Gamma_{I,I'}^*\Delta_{\mathcal{S}^{[k]/B}}$. All these cycles  are easily seen to be universally defined
on powers of surfaces.

Coming back to Definition \ref{defiunipower}, we see that, compared to the previous sections, the novelty is the fact that we have obvious new  universally defined cycles appearing on powers, namely the diagonals. Note that we can also use  the notion  of  universally defined cycle $\mathcal{Z}_I$ on  products of $i_1,\ldots, i_s$-th powers of $d$-dimensional varieties, which is the obvious variant of Definition \ref{defiunipower}.

    In order to deal with the combinatorics of the diagonals, let us introduce some definitions. A partition
$I=\{I_1,\ldots,I_l\}$ of $\{1,\ldots,k\}$ is a decomposition of $\{1,\ldots,k\}$ as a disjoint union
$$\{1,\ldots,k\}=I_1\sqcup\ldots\sqcup I_l,$$
where the $I_i$'s are not empty. We denote the integer $l$ by $l(I)$. To such a partition $I$ is associated a diagonal, respectively a relative diagonal if we work with a morphism $\mathcal{X}\rightarrow B$,
$$\Delta_I\subset  X^k,\,{\rm resp}\,\,\Delta_I\subset \mathcal{X}^{k/B},$$
defined as the set $\{(x_1,\ldots,x_k),\,x_i=x_j\,\,{\rm if}\,\, i,\,j\in I_s\,\,{\rm for}\,\,{\rm some}\,\,s\}$. If
$I=\{\{1\},\ldots,\{k\}\}$, $\Delta_I=X^k$, and if $I=\{\{{1},\ldots,{k}\}\}$, $\Delta_I$ is the small diagonal where all points $x_i$ coincide.
Note that $\Delta_I$ is almost canonically isomorphic to $X^{l(I)}$ (resp. $\mathcal{X}^{l(I)/B})$, where the isomorphism depends in fact of the ordering (indexing) of the set  $\{I_1,\ldots,\, I_{l(I)}\}$. (In practice, we can choose the natural ordering.)  We will thus consider $\Delta_I$ as a morphism from
$X^{l(I)}$ to $X^k$ defined by such an ordering.  This  morphism maps $(x_1,\ldots,x_l)$ to $(x'_1,\ldots,x'_k)$, where
$x'_i=x_s$ whenever $i$ belongs to $I_s$.

Thanks to Theorem \ref{theosansdiag}   which describes universally defined cycles on products, Conjecture  \ref{theavecpowerintro} is equivalent to the following statement.
\begin{conj}\label{theomainwithpowers} Let $\mathcal{Z}$ be a universally defined cycle on $k$-th powers of $d$-dimensional varieties. Then there
exists a unique set of   universally defined cycles $\mathcal{T}_I$ indexed by the partitions $I$ of $\{1,\ldots,k\}$, where each $\mathcal{T}_I$
is universally defined on products of $l(I)$ smooth  varieties of dimension $d$, such that for any variety $X$ over a field,
\begin{eqnarray}\label{feqformulepourcyclepower} \mathcal{Z}({X})=\sum_I\Delta_{I*}(\mathcal{T}_I(X,\ldots,\, X)) \,\,{\rm in}\,\,{\rm CH}(X^k).
\end{eqnarray}
\end{conj}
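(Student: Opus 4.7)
The approach I would take is to reformulate the conjecture using Theorem \ref{theosansdiag} and then to mimic the proof strategy of Theorem \ref{theointro1} in the more elaborate $k$-fold setting. First, the conjecture is equivalent to Conjecture \ref{theavecpowerintro}: a decomposition (\ref{feqformulepourcyclepower}) with universally defined cycles $\mathcal{T}_I$ on products of $l(I)$ smooth $d$-dimensional varieties is, by Theorem \ref{theosansdiag}, the same data as a family of polynomials $P_I$ as in (\ref{eqformulegeneralepourstand}). Indeed, given the $\mathcal{T}_I$, one applies Theorem \ref{theosansdiag} to each to extract the $P_I$; conversely, given the $P_I$, one sets $\mathcal{T}_I(\pi_1,\ldots,\pi_{l(I)}):=P_I({\rm pr}_s^*c_j(T_{\pi_s}))$, which is automatically a universally defined cycle on products because relative Chern classes are. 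Thus it suffices to produce, for any universally defined $\mathcal{Z}$, a decomposition of the form (\ref{feqformulepourcyclepower}); the uniqueness statement would then be handled by Theorem \ref{propunique}.

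Following the complete-intersection strategy of Theorem \ref{theointro1}, I would study the restriction of $\mathcal{Z}$ to the $k$-fold self-fibered power $(\mathcal{X}_{d,N,l}^0)^{k/B}$, where $\pi_{d,N,l}^0:\mathcal{X}_{d,N,l}^0\rightarrow B=\mathbf{G}(k_d,V_{d,l})$ is the universal family of smooth $d$-dimensional complete intersections of type $(l,\ldots,l)$ in $\mathbf{G}(d,N)$. The target is a complete-intersection analogue of the conjecture: in codimensions $\leq kd$, one should be able to write $\mathcal{Z}(\pi_{d,N,l}^0)=\sum_I \Delta_{I/B*}(Q_I)$, with $Q_I$ a polynomial in the relative Chern classes of $\pi_{d,N,l}^0$ pulled back from the $l(I)$ factors of $(\mathcal{X}_{d,N,l}^0)^{l(I)/B}$ and with coefficients {\it a priori} in ${\rm CH}(B)$. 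An argument in the spirit of Proposition \ref{lealapacdenori}, choosing a smooth projective $X_0$ with sufficiently free Chern classes relative to the pull-back of the tautological quotient $\mathcal{Q}$, would then kill the spurious ${\rm CH}(B)$-coefficients and force the $Q_I$ to have $\mathbb{Q}$-coefficients. The transfer from the complete-intersection case to an arbitrary $X$ would then proceed as in the last step of the proof of Theorem \ref{theointro1}: embed $X\hookrightarrow \mathbf{G}(d,N)$, restrict to $B_X=\mathbf{G}(k_d,V_{d,l,X})$, apply the open-inclusion axiom to the inclusion $(B_X\times X)\setminus \mathcal{C}_X\subset i^*\mathcal{X}_{d,N,l}^0$, and invoke a $k$-fold version of the injectivity Proposition \ref{lefirstred} to conclude on ${\rm CH}(X^k)$.

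The hard part, and the reason the statement remains conjectural, is the generation result controlling ${\rm CH}^*((\mathcal{X}_{d,N,l}^0)^{k/B})$ in small codimensions: that every cycle should, modulo the critical locus, be a sum of pushforwards by relative diagonals of polynomials in the relative Chern classes with ${\rm CH}(B)$-coefficients. In the $k=1$ case this is immediate because $\mathcal{X}_{d,N,l}$ is a Grassmannian fibration over $\mathbf{G}(d,N)$, which computes its Chow ring in terms of pull-backs from the base. For $k\geq 2$ the fibered power is no longer a Grassmannian fibration; the various relative diagonals $\Delta_{I/B}$ interact with the relative tangent classes in a genuinely non-trivial way, and any proof seems to require a careful stratification by partition type, in the spirit of the de Cataldo--Migliorini decomposition recalled in this section. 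This is precisely the step where current techniques fall short, and is why Section \ref{sectionavecpuissance} can only establish partial steps toward Conjecture \ref{theomainwithpowers} rather than the full statement.
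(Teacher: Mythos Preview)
The statement you are addressing is a \emph{conjecture}, and the paper does not prove it; there is therefore no ``paper's own proof'' to compare against. What the paper does establish is precisely the reduction you carry out in your first paragraph: by Theorem \ref{theosansdiag}, Conjecture \ref{theomainwithpowers} is equivalent to Conjecture \ref{theavecpowerintro}, and the uniqueness part is settled by Theorem \ref{propunique}. On this point your proposal and the paper agree.

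Where you diverge from the paper is in the strategy for the remaining (open) existence part. You propose to extend the complete-intersection argument of Theorem \ref{theointro1} directly to the $k$-fold fibered power $(\mathcal{X}_{d,N,l}^0)^{k/B}$, and you correctly isolate the obstruction: for $k\geq 2$ this space is no longer a Grassmannian fibration over $\mathbf{G}(d,N)$, and one lacks a description of its Chow groups in small codimension as sums of diagonal pushforwards of Chern polynomials with ${\rm CH}(B)$-coefficients. The paper does not attempt this. Instead, its partial progress (Sections \ref{sectiontrick}--\ref{subsecunique}) proceeds through a different device: the auxiliary construction $\mathcal{Z}\mapsto \mathcal{Z}_I$ obtained by evaluating $\mathcal{Z}$ on disjoint unions $X_\bullet=\sqcup_s X_s$ and restricting to the component indexed by $I$. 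This yields, for each partition $I$, a universally defined cycle on products of powers, and the combinatorics of how these interact with the diagonals (formula (\ref{eqnew33})) is what drives both the uniqueness proof and Theorem \ref{corocohchow}. Your outline does not mention this construction; it is the paper's main tool for everything it actually proves about powers, and any eventual proof of the conjecture will likely need it as well, if only to pin down the polynomials $P_I$ once a decomposition is shown to exist.

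In summary: your reduction is correct and matches the paper; your sketch of a direct attack via fibered powers of the universal complete intersection is a reasonable line of thought but is not the route the paper pursues for its partial results; and your identification of the missing step is accurate---it is exactly why the statement remains a conjecture.
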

Here we recall that $\mathcal{T}_I$ provides a cycle in ${\rm CH}(X_1\times \ldots\times X_l)$ for any smooth $d$-dimensional varieties
$X_1,\ldots, \,X_l$, and the meaning of  $\mathcal{T}_I(X,\ldots,\, X)$ is that we apply $\mathcal{T}_I$ to the case where $X_1=\ldots=X_l=X$.

Indeed, Theorem \ref{theosansdiag} applied to each $\mathcal{T}_I$ combined with Conjecture \ref{theomainwithpowers}  provides the desired formula
\begin{eqnarray}\label{feqformulepourcyclepoweravecpoly} \mathcal{Z}(X)=\sum_I\Delta_{I,X*}P_I({\rm pr}_1^*c_{1}(X),\ldots,\,{\rm pr}_1^*c_{d}(X),\ldots,\,{\rm pr}_{l(I)}^*c_{1}(X),\ldots,\,{\rm pr}_{l(I)}^*c_{d}(X) )\,\,{\rm in}\,\,{\rm CH}(X^k),
\end{eqnarray}
where  $P_I$ is a polynomial  in the variables
 $c_{1,1},\ldots,\,c_{d,1},\ldots,\,c_{1,l(I)},\ldots,\,c_{d,l(I)}$, which is of weighted degree $\leq d$ in each set
 of variables $c_{1,s},\ldots,\,c_{d,s}$.

 A cycle of the form (\ref{feqformulepourcyclepoweravecpoly}) will be said standard.

\subsection{An auxiliary  construction \label{sectiontrick}}
We start with a construction which provides for  each universally defined cycle $\mathcal{Z}$ on $k$-th powers of varieties of dimension
$d$  and   each partition $I$ of
$\{1,\ldots,k\}$  a universally defined cycle
$\mathcal{Z}_I$ on products of  $i_1,\ldots,\,i_{l(I)}$-th powers of  $l(I)$ varieties of dimension $d$.
Let $$\mathcal{X}_1\rightarrow B_1,\ldots,\, \mathcal{X}_{l(I)}\rightarrow B_{l(I)}$$ be smooth morphisms of relative dimension $d$.
Consider the family
\begin{eqnarray}
\label{eqfamillytrick} \mathcal{X}_\bullet:=\mathcal{X}_1\times B_2\times\ldots\times B_{l(I)}\sqcup B_1\times \mathcal{X}_2\times\ldots\times B_{l(I)}\sqcup \ldots\sqcup  B_1\times\ldots\times \mathcal{X}_{l(I)}\\
\nonumber \rightarrow B_1\times\ldots\times B_{l(I)}.
\end{eqnarray}
Over a point $(b_1,\ldots,\,b_{l(I)})\in B_1\times\ldots\times B_{l(I)}$,  its fiber is the disjoint union $\mathcal{X}_{b_1}\sqcup\ldots\sqcup \mathcal{X}_{b_{l(I)}}$. In the sequel, if
$X_1,\ldots,\,X_l$ are varieties, we will denote similarly $X_\bullet=\sqcup_iX_i$ (this is the case where the basis  $B_i$  are points).
Let $I=\{I_1,\ldots,\,I_{l(I)}\}$ and $i_s:=|I_s|$. There is a natural inclusion map over $B_{1}\times\ldots\times B_{l(I)}$

\begin{eqnarray}
\label{eqcomponetnttrick}\mathcal{X}_1^{i_1/B_1}\times\ldots\times \mathcal{X}_{l(I)}^{i_{l(I)}/B_{l(I)}}
\hookrightarrow \mathcal{X}_\bullet^{k/(B_1\times\ldots\times B_{l(I)})}
\end{eqnarray}
whose image is an union of connected components of $\mathcal{X}_\bullet$ determined by $I$, and  which we will denote by \begin{eqnarray}\label{eqnameofcomponent}\mathcal{X}_1^{I_1/B_1}\times\ldots\times \mathcal{X}_{l(I)}^{I_{l(I)}/B_{l(I)}}.\end{eqnarray}
  Fiberwise, if $I_1=\{j_{1,1}<\ldots <j_{1,i_1}\}\subset \{1,\ldots,\,k\}$,
..., $I_{l(I)}=\{j_{l(I),1}<\ldots <j_{l(I),i_{l(I)}}\}\subset \{1,\ldots,\,k\}$,  this map sends
\begin{eqnarray}
\label{eqmapI}((x_{1,1},\ldots,x_{1, i_1}),(x_{2,1},\ldots,x_{2, i_2}),\ldots,\,(x_{l(I),1},\ldots,x_{l(I), i_{l(I)}}))\in X_1^{i_1}\times \ldots\times X_{l(I)}^{i_{(l(I)}}\end{eqnarray}
to the  element
$$(x_{j})\in  X_\bullet^k,$$ where, if $j=i_{s,t}\in I_s$, we set
$x_j=x_{s,t}$.

 Given a universally defined cycle $\mathcal{Z}$ on $k$-th powers of varieties of dimension $d$ and a partition $I$ of
$\{1,\ldots,\,k\}$,  we  define, for each families
$\mathcal{X}_1\rightarrow B_1,\ldots, \,\mathcal{X}_{l(I)}\rightarrow B_{l(I)}$ of smooth varieties of dimension $d$, the cycle
\begin{eqnarray}\label{eqZJ} \mathcal{Z}_I(\mathcal{X}_1, \ldots, \mathcal{X}_{l(I)})\in{\rm CH}(\mathcal{X}_1^{i_1/B_1}\times \ldots\times \mathcal{X}_{l(I)}^{i_{l(I)/B_{l(I)}}})
\end{eqnarray}

as the restriction of $\mathcal{Z}(\mathcal{X}_\bullet)$ to the subvariety   $\mathcal{X}_1^{I_1/B_1}\times\ldots\times \mathcal{X}_{l(I)}^{I_{l(I)}/B_{l(I)}}$  of  $\mathcal{X}_\bullet^{k/(B_1\times\ldots\times B_{l(I)})}$ determined  by $I$ that we constructed above.
It is clear that the cycle $\mathcal{Z}_I$ is universally defined on products
$X_1^{i_1}\times\ldots\times X_{l(I)}^{i_{l(I)}}$ of  powers of $l(I)$ smooth varieties of dimension $d$ with the exponents
$i_1,\ldots ,i_{l(I)}$  determined by
$I$.

The next construction goes in the other direction: Given a universally defined cycle $\mathcal{T}$ on  products of $i_1,\ldots, i_{l(I)}$-th powers of $d$-dimensional varieties
with $\sum_si_s=k$,  for any smooth morphism $\mathcal{X}\rightarrow B$ of relative dimension $d$, we get by setting
$\mathcal{X}_1=\mathcal{X},\ldots, \,\mathcal{X}_k=\mathcal{X}$, a cycle
$\mathcal{T}(\mathcal{X}, \ldots, \mathcal{X})\in {\rm CH}(\mathcal{X}^{i_1/B}\times \ldots \times \mathcal{X}^{i_{l(I)}/B})$.
  We will  denote  by
  \begin{eqnarray}
  \label{eqnewpourZIdelta}
  \mathcal{T}^\delta(\mathcal{X})\in {\rm CH}(\mathcal{X}^{k/B})
  \end{eqnarray}
   the restriction of
this cycle to $$\mathcal{X}^{k/B}\subset \mathcal{X}^{i_1/B}\times \ldots \times \mathcal{X}^{i_{l(I)}/B},$$  where
$\mathcal{X}^{k/B}$ is naturally identified with the inverse image of the small diagonal of $B$ in $B^k$ under the morphism
$ \mathcal{X}^{i_1/B}\times \ldots \times \mathcal{X}^{i_{l(I)}/B}\rightarrow B^{l(I)}$. $\mathcal{T}^\delta$ is universally defined on $k$th powers of $d$-dimensional varieties.

Starting from  a universally defined cycle $\mathcal{Z}$ on $k$-th powers of $d$-dimensional varieties, the composition of these two operations produces for each $I$ a
 cycle $\mathcal{Z}_I^\delta$ which  is another  universally defined cycle on $k$-th powers of $d$-dimensional varieties.
Note that for  $I=\{\{1,\ldots,k\}\}$, $\mathcal{Z}_I^\delta=\mathcal{Z}$ using the axioms of Definition \ref{defiunipower}.

Assume that a universally defined cycle $\mathcal{Z}$ on $k$-th powers of $d$-dimensional varieties is standard, hence given by a formula  of the form (\ref{feqformulepourcyclepoweravecpoly}).
Then given a partition $I$ of $\{1,\ldots,\,k\}$,
and varieties ${X}_1,\ldots,{X}_{l(I)}$, we apply formula
(\ref{feqformulepourcyclepoweravecpoly}) to the variety $X_\bullet$
of (\ref{eqfamillytrick}) and restrict to the component (\ref{eqnameofcomponent}).
Then we observe that $$\Delta_J(X_\bullet)\cap  ({X}_1^{I_1}\times\ldots\times {X}_{l(I)}^{I_{l(I)}})=\emptyset$$  if $\Delta_I$ is not  contained in $\Delta_J$,  which is equivalent to the fact that some diagonal identity $x_i=x_j$ satisfied  in $\Delta_J$ is not satisfied in $\Delta_I$, or equivalently, at least one
 $J_s$ is not contained in any $I_t$.

  If, to the contrary,  $\Delta_I$ is contained in $\Delta_J$, (that is, the partition $\Delta_J$ is a refinement of the partition of
$\Delta_I$, meaning that each $J_s$ is contained in a $I_t$), then \begin{eqnarray}
\label{eqdefiDelataIJ}
\Delta_J({X}_\bullet)\cap ({X}_1^{I_1}\times\ldots\times {X}_{l(I)}^{I_{l(I)}})=\Delta_{J,I}\subset {X}_1^{I_1}\times\ldots\times {X}_{l(I)}^{I_{l(I)}}.
\end{eqnarray}
In the last formula, recalling that the partition $J$ refines the partition $I$, it provides a partition of each $I_s$, hence a diagonal for each $X_s^{I_s}$ and taking their product,  this  defines  the desired
diagonal $\Delta_{J,I}\subset X_1^{I_1}\times\ldots\times {X}_{l(I)}^{I_{l(I)}}$. We will view it as   a morphism from
$X_1^{i^J_1}\times\ldots\times X_{l(I)}^{i^J_{l(I)}}$
 to $X_1^{I_1}\times\ldots\times X_{l(I)}^{I_{l(I)}}$. Here $i^J_s$ is the number of elements of the partition $J$ contained in $I_s$.
  It follows from these two observations and from (\ref{feqformulepourcyclepoweravecpoly}) that, for each partition $I$ of $\{1,\ldots,k\}$
\begin{eqnarray}\label{eqnew33}\mathcal{Z}_I(X_1,\ldots,X_{l(I)})=
\\
\nonumber
\sum_{\Delta_I\subset \Delta_J}\Delta_{J,I*}(P_J({\rm pr}_1^*c_1(X_{j(1))},\ldots,\,{\rm pr}_1^*c_d(X_{j(1)}),\ldots, {\rm pr}_{l(J)}^*c_1(X_{j(l(J))}),\ldots,\,{\rm pr}_{l(J)}^*c_d(X_{l(J)}) )
\end{eqnarray}
in ${\rm CH}(X_1^{I_1}\times\ldots\times X_{l(I)}^{I_{l(I)}})$, where the map $j:\{1,\ldots,l(J)\}\rightarrow  \{1,\ldots,l(I)\}$ maps $s$ to $t$ whenever
$J_s\subset I_t$.

We now  restrict to the case where ${X}_i={X}$, and  observe that, when $X_i=X$ for all $i$, $\Delta_{J,I}=\Delta_J$ for those $J$ refining $I$. Then (\ref{eqnew33}) provides for any $I$ the following formula
\begin{eqnarray}
\label{eqpourZavectrick} \mathcal{Z}_I^\delta({X})=\sum_{\Delta_I\subset \Delta_J}\Delta_{J*}(P_J({\rm pr}_1^*c_1(X),\ldots,\,{\rm pr}_1^*c_d(X),\ldots, {\rm pr}_{l(I)}^*c_1(X),\ldots,\,{\rm pr}_{l(I)}^*c_d(X) )
\end{eqnarray}
 in ${\rm CH}({X}^{k})$.

\subsection{Proof of Theorem \ref{corocohchow}\label{subseqnew}}
Let us use the construction developed in the previous section to prove
the following
\begin{theo}\label{corocohchowpasintro}  (Cf.  Theorem  \ref{corocohchow}) Let $\mathcal{Z}$ be  a standard universally defined cycle on
$k$-th  powers
of $d$-dimensional smooth varieties. If $\mathcal{Z}(X)$ is cohomologous to $0$ on $X^k$ for any
smooth $d$-dimensional variety $X$ defined over $\mathbb{C}$, $\mathcal{Z}(X)=0$ in ${\rm CH}(X^k)$ for any
such $X$.
\end{theo}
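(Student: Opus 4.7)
The plan is to combine the auxiliary construction of Section \ref{sectiontrick} with a well-founded induction on the refinement lattice of partitions of $\{1,\ldots,k\}$ and with Poincar\'e duality on $X^k$. First I would reduce to the case of smooth projective $X$: any smooth $X$ embeds as a Zariski open into some smooth projective $\bar X$ by resolving a projective completion, and the open inclusion axiom of Definition \ref{defiunipower} gives $\mathcal{Z}(X)=\mathcal{Z}(\bar X)_{|X^k}$; since the cohomological-triviality hypothesis applies in particular to $\bar X$, it suffices to prove $\mathcal{Z}(\bar X)=0$ in ${\rm CH}(\bar X^k)$.

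Since $\mathcal{Z}$ is standard, I may write $\mathcal{Z}(X)=\sum_I\Delta_{I,X*}P_I(\text{pr}_\cdot^*c_\cdot(X))$ as in (\ref{feqformulepourcyclepoweravecpoly}), and the aim becomes to show that every polynomial $P_I$ vanishes. Because the cohomology class of $\mathcal{Z}(Y)$ is zero for every smooth $d$-fold $Y$, it vanishes on disjoint unions $X_\bullet$; restricting to the component (\ref{eqnameofcomponent}) and specializing to $X_i=X$ shows that $\mathcal{Z}_I^\delta(X)$ is cohomologous to $0$ on $X^k$ for every smooth projective $X$ and every partition $I$. Formula (\ref{eqpourZavectrick}) reads, in Chow, $\mathcal{Z}_I^\delta(X)=\sum_{\Delta_I\subset\Delta_J}\Delta_{J*}(P_J(\text{pr}_\cdot^*c_\cdot(X)))$, which invites induction on $I$ in the refinement order, starting from the finest partition (where the sum collapses to the single term $J=I$) and proceeding toward the coarsest.

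For the base case $I$ finest, $\Delta_I$ is the identity on $X^k$ and the formula gives $P_I(\text{pr}_\cdot^*c_\cdot(X))=0$ in $H^*(X^k,\mathbb{Q})$; Milnor--Thom, applied via K\"unneth to smooth projective $d$-folds whose Chern monomials pair non-degenerately, then forces $P_I=0$ as a polynomial. For the inductive step at $I$, the induction hypothesis gives $P_J=0$ for every $J$ strictly refining $I$, so the corresponding Chow summands in (\ref{eqpourZavectrick}) literally vanish, leaving $\Delta_{I*}(P_I(\text{pr}_\cdot^*c_\cdot(X)))=0$ in $H^*(X^k,\mathbb{Q})$. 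The critical step is to deduce $P_I=0$ from this: for smooth projective $X$, K\"unneth shows that $\Delta_I^*:H^*(X^k,\mathbb{Q})\to H^*(X^{l(I)},\mathbb{Q})$ is surjective, since any pure tensor $\bigotimes_s\gamma_s\in H^*(X^{l(I)})$ is the image of the tensor on $X^k$ obtained by placing $\gamma_s$ in one slot of each block $I_s$ and $1$'s elsewhere; by Poincar\'e duality on smooth projective $X^k$ this is equivalent to the injectivity of $\Delta_{I*}$ on rational cohomology. Hence $P_I(c_\cdot(X))=0$ in $H^*(X^{l(I)},\mathbb{Q})$ for every smooth projective $X$, and a second application of Milnor--Thom yields $P_I=0$.

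Once all $P_I$ vanish, the standard expression (\ref{feqformulepourcyclepoweravecpoly}) gives $\mathcal{Z}(\bar X)=0$ in ${\rm CH}(\bar X^k)$, whence $\mathcal{Z}(X)=0$ in ${\rm CH}(X^k)$ for every smooth $d$-dimensional $X$ by the initial reduction. The delicate point in this argument is the Poincar\'e-duality/injectivity step for $\Delta_{I*}$, which is exactly what requires working with smooth projective $X$; the reduction to that case via compactification and the open inclusion axiom is what makes this harmless, and the remainder is a direct assembly of the auxiliary construction of Section \ref{sectiontrick} with Milnor--Thom.
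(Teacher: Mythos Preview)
Your proof is correct and follows essentially the same strategy as the paper's: apply the auxiliary construction of Section~\ref{sectiontrick}, run a decreasing induction along the refinement order using the injectivity of $\Delta_{I*}$ on rational cohomology for projective $X$, and invoke Milnor--Thom to conclude that each $P_I$ vanishes. The only cosmetic differences are that the paper works with $\mathcal{Z}_I$ evaluated on \emph{distinct} varieties $X_1,\ldots,X_{l(I)}$ via formula~(\ref{eqnew33}) rather than your $\mathcal{Z}_I^\delta$ with all $X_i=X$ via formula~(\ref{eqpourZavectrick}), and that your preliminary reduction to projective $X$ is superfluous, since once all $P_I$ vanish the standard expression~(\ref{feqformulepourcyclepoweravecpoly}) already gives $\mathcal{Z}(X)=0$ in ${\rm CH}(X^k)$ for every smooth $X$.
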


\begin{proof}
Recall that the universally defined cycles on products $\mathcal{Z}_I$ associated with $\mathcal{Z}$ are
  obtained by computing $\mathcal{Z}$ on $X_1^{i_1}\times\ldots\times X_{l(I)}^{i_{l(I)}}$ seen as a
connected component  of
$X_\bullet^k$ where $X_\bullet=\sqcup X_i$.
It follows from this definition that, if $\mathcal{Z} $   is cohomologous to $0$ on any
smooth $d$-dimensional variety over $\mathbb{C}$, then each $\mathcal{Z}_I$ is cohomologous to $0$
on products of powers $X_1^{i_1}\times\ldots\times X_{l(I)}^{i_{l(I)}}$.
It remains to show that this implies that the polynomials $P_I$ appearing in
(\ref{feqformulepourcyclepoweravecpoly}) are $0$.
We do this by decreasing induction on $l(I)$, using the  formula
(\ref{eqnew33}).
When $l(I)=k$, so $I=\{\{1\},\ldots,\{k\}\}$,
we get in particular
$$\mathcal{Z}_{\{\{1\},\ldots,\{k\}\}}(X_1,\ldots,X_k)=$$
$$P_{\{\{1\},\ldots,\{k\}\}}(
{\rm pr}_1^*c_1(X_1),\ldots,\,{\rm pr}_1^*c_d(X_1),\ldots,\,{\rm pr}_k^*c_1(X_k),\ldots,\,{\rm pr}_k^*c_d(X_k))\,\,{\rm in}\,\,{\rm CH}(X_1\times\ldots\times X_k),$$
and this cycle has to be cohomologous to $0$ on $X_1\times\ldots\times X_k$ for any choice of $X_i$; we already saw (see Proposition \ref{lealapacdenori}) that this implies $P_I=0$.
For the general case, we assume that we already proved that $P_J=0$ for $l(J)>l$. Let $I$ be a partition of $\{1,\ldots,k\}$
with $l(I)=l$.
Formula (\ref{eqnew33}) taken in cohomology then gives, when the varieties  $X_i$  are
projective so that the Gysin morphism $\Delta_{I*}$ is well defined on cohomology
$$[\mathcal{Z}_I(X_1,\ldots,\,X_{l(I)})]=\Delta_{I*}P_I({\rm pr}_1^*c_1(X_1),\ldots,\,{\rm pr}_1^*c_d(X_1),\ldots,\,{\rm pr}_{l(I)}^*c_1(X_{l(I)}),\ldots,\,{\rm pr}_l^*c_d(X_{l(I)}))$$
in $H^*(X_1\times\ldots\times X_k,\mathbb{Q})$.
Here the map $\Delta_I$  is the diagonal inclusion of $X_1\times\ldots\times X_{l(I)}$ in $X_1^{I_1}\times\ldots \times X_l^{I_{l(I)}}$ determined by $I$.
The vanishing of $[\mathcal{Z}_I(X_1,\ldots,\,X_{l(I)})]$ and the injectivity of the Gysin morphism
$\Delta_{I*}$ for projective varieties  $X_i$  then implies that $$P_I({\rm pr}_1^*c_1(X_1),\ldots,\,{\rm pr}_1^*c_d(X_1),\ldots,\,{\rm pr}_{l(I)}^*c_1(X_{l(I)}),\ldots,\,{\rm pr}_{l(I)}^*c_d(X_{l(I)}))$$ is cohomologous to  $0$ on
$X_1\times \ldots \times X_{l(I)}$ for any smooth $d$-dimensional  projective varieties $X_1,\ldots, X_{l(I)}$, so that $P_I=0$, by Proposition \ref{lealapacdenori} again.
 \end{proof}
 \subsection{Application to uniqueness \label{subsecunique}}
Using the formalism  above, we prove the following uniqueness result.
\begin{theo}\label{propunique} Let $d$ and $k$ be fixed and let $Y$ be a smooth variety over $\mathbb{C}$. Consider
a collection $P_\bullet=(P_I)$ of polynomials with coefficients in ${\rm CH}(Y)$, where $I$ runs through the set of
partitions of $\{1,\ldots,k\}$, and each $P_I$ is a polynomial in the  variables
$$x_{1,1},\ldots, x_{d,1},\ldots,\,x_{1,l(I)},\ldots,\,x_{d,l(I)},$$  of weighted degree $\leq d$ in each set of variables
$x_{1,j},\ldots,\,x_{d,j}$ (where  ${\rm deg}\,x_{i,j}=i$).
Then the $Y$-universal cycle  $\mathcal{Z}_{P_\bullet}$ on $k$-th powers of $d$-dimensional varieties defined by
\begin{eqnarray}\label{eqZPstand}
\mathcal{Z}(X)=\sum_I\Delta_{I*}P_I({\rm pr}_1^*c_{1}(X),\ldots, {\rm pr}_1^*c_{d}(X),\ldots,\,{\rm pr}_{l(I)}^*c_{1}(X),\ldots, {\rm pr}_{l(I)}^*c_{d}(X))\in {\rm CH}(Y\times X^k)
\end{eqnarray}
 vanishes for any smooth $d$-dimensional variety $X$ over $\mathbb{C}$ if and only if $P_\bullet=0$.
\end{theo}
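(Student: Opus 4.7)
The plan is to recover the polynomials $P_I$ one at a time, by decreasing induction on $l(I)$, by feeding the auxiliary construction $\mathcal{Z}\mapsto \mathcal{Z}_I$ of Section \ref{sectiontrick} into the explicit formula (\ref{eqnew33}). Suppose $\mathcal{Z}=\mathcal{Z}_{P_\bullet}$ vanishes on every smooth $d$-dimensional variety $X$ over $\mathbb{C}$. Given any partition $I$ of $\{1,\ldots,k\}$ and any smooth $d$-dimensional varieties $X_1,\ldots,X_{l(I)}$, I would form the disjoint union $X_\bullet=X_1\sqcup\cdots\sqcup X_{l(I)}$, which is itself a smooth $d$-dimensional variety, and restrict $\mathcal{Z}(X_\bullet)=0$ to the distinguished component $X_1^{I_1}\times\cdots\times X_{l(I)}^{I_{l(I)}}$ of $X_\bullet^k$. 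This produces $\mathcal{Z}_I(X_1,\ldots,X_{l(I)})=0$ for every choice of the $X_s$, and the computation leading to (\ref{eqnew33}), adapted to the $Y$-coefficient setting, then gives
\begin{equation}\label{eqplansum}
0 = \sum_{\Delta_I\subset \Delta_J}\Delta_{J,I*}\bigl(P_J({\rm pr}_1^*c_i(X_{j(1)}),\ldots,{\rm pr}_{l(J)}^*c_i(X_{j(l(J))}))\bigr)
\end{equation}
in ${\rm CH}(Y\times X_1^{I_1}\times\cdots\times X_{l(I)}^{I_{l(I)}})$, the sum running over partitions $J$ refining $I$.

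I then run the decreasing induction on $l(I)$. For the base case $l(I)=k$ and $I=\{\{1\},\ldots,\{k\}\}$, the only $J$ refining $I$ is $I$ itself and $\Delta_{I,I}$ is the identity, so (\ref{eqplansum}) reduces to the vanishing of $P_I({\rm pr}_1^*c_i(X_1),\ldots,{\rm pr}_k^*c_i(X_k))$ in ${\rm CH}(Y\times X_1\times\cdots\times X_k)$ for all smooth $d$-dimensional $X_s$. For the inductive step, assuming $P_J=0$ whenever $l(J)>l$ and taking $l(I)=l$, every $J$ refining $I$ satisfies $l(J)\geq l$ with equality only for $J=I$, so (\ref{eqplansum}) collapses to $\Delta_{I,I*}(P_I(\ldots))=0$, where $\Delta_{I,I}$ is the product of the diagonal embeddings $\delta_s\colon X_s\hookrightarrow X_s^{|I_s|}$. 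Restricting attention to smooth projective $X_s$, each $\delta_{s*}$ admits the pushforward along the first projection as a left inverse (since $p_{1,s}\circ\delta_s={\rm id}$), so $\Delta_{I,I*}$ is injective on Chow groups. Hence the vanishing of $P_I({\rm pr}_1^*c_i(X_1),\ldots,{\rm pr}_{l(I)}^*c_i(X_{l(I)}))$ in ${\rm CH}(Y\times X_1\times\cdots\times X_{l(I)})$ for arbitrary smooth projective $X_s$ of dimension $d$.

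To finish, I must deduce from this that the polynomial $P_I$ itself is zero. Here I imitate the Chern-number extraction argument recalled after Theorem \ref{theoY}: writing $P_I=\sum_M \alpha_M\, x^M$ with $\alpha_M\in{\rm CH}(Y)$ and $M=(M_1,\ldots,M_{l(I)})$ a multi-monomial of weighted degrees $|M_s|\leq d$, for each such $M$ I pick test monomials $L_s$ of complementary weighted degrees $d-|M_s|$, multiply the vanishing relation by $\prod_s{\rm pr}_s^*c_{L_s}(X_s)$, and push forward to $Y$ via ${\rm pr}_Y$. The projection formula converts the result into the relation
\[
0 = \sum_M \alpha_M \cdot \prod_s \int_{X_s} c_{M_s}(X_s)\, c_{L_s}(X_s)
\]
in ${\rm CH}(Y)$. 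By the Milnor--Thom theorem \cite{milnor}, \cite{thom}, for each $s$ one can pick smooth projective $d$-dimensional varieties and test classes making the corresponding single-factor Chern-number matrix invertible; its $l(I)$-fold tensor product is then also invertible, so each $\alpha_M$ vanishes and $P_I=0$, closing the induction. The step I expect to be most delicate is precisely this last one: ${\rm CH}(Y\times X_1\times\cdots\times X_{l(I)})$ is in general strictly larger than the tensor product of the individual Chow groups, and the success of the argument hinges on the observation that the pushforward-to-$Y$ manoeuvre isolates each coefficient $\alpha_M$ while sidestepping the failure of K\"unneth, at the cost of requiring the $X_s$ to be projective.
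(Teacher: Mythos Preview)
Your proof is correct and follows essentially the same route as the paper: apply the disjoint-union construction to obtain the vanishing of each $\mathcal{Z}_I$, invoke formula (\ref{eqnew33}), run the decreasing induction on $l(I)$, use injectivity of the diagonal pushforward for projective $X_s$, and finish with Milnor--Thom. The only organisational difference is in the final extraction of the coefficients $\alpha_M$: the paper reduces inductively on $l(I)$ to the case $l(I)=1$ by absorbing all but one factor into an enlarged $Y'$, whereas you extract all coefficients simultaneously via the tensor product of the Milnor--Thom matrices; both arguments are equivalent and your version is arguably more transparent.
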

\begin{rema}{\rm Even in the case where $Y$ is a point, Theorem \ref{propunique} may seem a bit surprising, especially when  combined with Theorem \ref{corocohchowpasintro}. Indeed, consider for example the case of standard universal $0$-cycles on $X^k$. If $X$ is smooth projective and connected, then the cohomology class of a $0$-cycle on $X$ is encoded by its degree. It follows that
all $0$-cycles indexed by partitions and Chern polynomials appearing  in formula (\ref{feqformulepourcyclepoweravecpoly}) have proportional cohomology classes in $X^k$. So for given connected $X$, the statement is wrong. There are two reasons  behind Theorem \ref{propunique}: first of all $X$ is not connected and this is crucial since then the group $H_0(X^k,\mathbb{Z})$ of cycle classes of $0$-cycles encodes the connected components of $X^k$ and in particular the diagonals. The second point   is general complex cobordism, which says (for $k=1$) that there are no {\it universal} polynomial relations between Chern numbers, that is  relations satisfied by all $X$'s.  }
\end{rema}
\begin{proof}[Proof of Theorem \ref{propunique}] Assume $P_\bullet$ satisfies the property that the associated cycle
$\mathcal{Z}_{P_\bullet}(X^k)$ given by formula (\ref{eqZPstand}) vanishes for smooth $d$-dimensional varieties
$X$ over $\mathbb{C}$. As in the previous proof, this implies as well by definition that each
$\mathcal{Z}_{P_\bullet,I}(X_1,\ldots,\, X_{l(I)})$ vanishes for any smooth $d$-dimensional varieties $X_1,\ldots,\,X_{l(I)}$.
 We use now formula (\ref{eqnew33}). Next, as in the previous proof, this  implies by decreasing induction on
 $l=l(I)$  that, denoting by
 $\Delta_{I_\bullet}$ the inclusion of
 $X_1\times\ldots\times X_{l(I)}$ in $X_1^{I_1}\times\ldots X_{l(I)}^{I_{l(I)}}$ given by the small diagonal inclusion on each summand, each cycle
  $$\Delta_{I_\bullet,*}P_I({\rm pr}_1^*c_{1}(X_1),\ldots,\,{\rm pr}_1^*c_{d}(X_1),\,\ldots,\,{\rm pr}_{l(I)}^*c_{1}(X_{l(I)}),\ldots,\,{\rm pr}_{l(I)}^*c_{d}(X_{l(I)}))$$ vanishes in ${\rm CH}(Y\times X_1^{I_1}\times\ldots\times X_{l(I)}^{I_{l(I)}})$ for each set of smooth $d$-dimensional varieties $X_1,\ldots,\,X_{l(I)}$.
  Arguing inductively on $l(I)$ and replacing $Y$  by $Y':=Y\times X_1^{i_1}\times\ldots\times X_{l(I)-1}^{i_{l(I)-1}}$, we conclude that it suffices to consider the case
  where $l(I)=1$, and  $P$ is a polynomial in $c_1,\ldots,\,c_d$ of weighted degree $\leq d$ with coefficients in
  ${\rm CH}(Y')$. We then have to prove that if $P\not=0$, then for any $k>0$, there exists a smooth
   $d$-dimensional variety $X$ such that
  $\Delta_*P(c_1(X),\dots,c_d(X))\not=0$ in ${\rm CH}(Y'\times X^k)$.
  As $\Delta_*$ is injective when $X$ is projective, we are finally reduced to  the statement for $k=1$, which is
  proved in  \cite{thom}, \cite{milnor}.
\end{proof}

CNRS, Institut de math\'{e}matiques de Jussieu-Paris rive gauche

 claire.voisin@imj-prg.fr
    \end{document}